\newtheorem{theorem}{Theorem}
\newtheorem{lemma}{Lemma}[section]
\newtheorem{proposition}{Proposition}[section]
\begin{document}

{\bf On the topology of 3-manifolds admitting Morse-Smale diffeomorphisms with four fixed points of pairwise different Morse indices }

O.Pochinka, E. Talanova

\begin{abstract} In the present paper we consider class  $G$ of orientation preserving Morse-Smale diffeomorphisms $f$, which defined on closed 3-manifold $M^3$, and whose non-wandering set consist of four fixed points with pairwise different Morse indices. It follows from S. Smale and K. Meyer results that all gradient-like flows with similar properties has Morse energy function with four critical points of pairwise different Morse indices. This implies, that supporting manifold $M^3$ for these flows admits a Heegaard  decomposition of genus 1 and hence it is homeomorphic to a lens space $L_{p,q}$. Despite the simple structure of the non-wandering set in class $G$ there exist diffeomorphisms  with wild embedded  separatrices. According to  V. Grines, F. Laudenbach, O. Pochinka results such diffeomorphisms do not possesses an energy function, and question about topology their supporting manifold is open. According to  V. Grines, E. Zhuzhoma and V. Medvedev  results  $M^3$ is homeomorphic to a lens space $L_{p,q}$ in case of tame embedding of closures of one-dimensional  separatrices of diffeomorphism $f\in G$. Moreover, the wandering set of $f$ contains at least $p$ non-compact heteroclinic curves.
In the present paper similar result was received for arbitrary diffeomorphisms of class $G$.
Also we construct diffeomorphisms from $G$ with wild embedding one-dimensional  separatrices on every lens space $L_{p,q}$.  Such examples were known previously only on the 3-sphere.
\end{abstract}

\section{Formulation of results}
It's well known that the Morse-Smale systems exist on any manifolds. These systems describe regular (non-chaotic) processes in technology. They have finite hyperbolic non-wandering set, which is fully described by numbers orbits of different {\it Morse indices} (the dimension of their unstable manifold). A natural question arises, what we say about the supporting manifold topology of such system, if we know the structure of it non-wandering set. The classic example of an exhaustive answer to the question are the systems with two points of extremal Morse indices.
In this case, it follows from Reeb's theorem \cite{Reeb}, that the supporting manifold is homeomorphic to the $n$-sphere. Another example of following global properties from local ones is the equality for a gradient-like system of the alternating sum of number periodic points of different Morse indices to Euler characteristic of the ambient surface. For flows this fact  follows from classical Poincare-Hopf theorem \cite{Poincare}, \cite{Hopf} and for cascades it follows from the existence of a  Morse energy function, proved by D. Pixton \cite{Pixton1977}, and from Morse inequality.

For the dimension equals 3 this  equality also is true. However, the Euler characteristic of all closed orientable 3-manifolds is equal to zero and, accordingly, it does not shed any light on the  supporting space topology. A more cunning play with the numbers of periodic points of different Morse indices leads to the fact that for flows it is possible to find a connection between these numbers and the genus of the Heegaard decomposition of a 3-manifold. In the absence of a topological classification of 3-manifolds, an information about the Heegaard decomposition of a given manifold is very informative and identifying in some cases. A similar question for diffeomorphisms is open today due to the possibility of wild behaviour of the saddle point separatrices, first discovered by D. Pixton \cite{Pixton1977}.

The effect of the possibly wild embedding of saddle separatrices of a Morse-Smale 3-diffeomorphism into an ambient manifold had a revolutionary impact on the understanding of the dynamics of such systems. It became clear that their description does not fit into the framework of purely combinatorial invariants, and requires the involvement of a topological apparatus. Nevertheless, a complete topological classification of Morse-Smale 3-diffeomorphisms, including the realization, was obtained in the works of C. Bonatti, V. Grines and O. Pochinka \cite{BoGrPo2019}, \cite{BoGrPo2017}. However, these invariants does not answer the question, whether this 3-manifold admits a gradient-like diffeomorphism with wildly embedded saddle separatrices or not. 

In papers of V. Grines, E. Zhuzhoma and V. Medvedev  \cite{GrMeZh2003} it was established that gradient-like diffeomorphisms with tame  embedded one-dimensional saddle separatrices are look like to flows, therefore the structure of the non-wandering set of such a diffeomorphism uniquely determines the Heegaard decomposition of its supporting manifold. In some partial cases this result was generalized on mildly wild embedding of separatrices by V. Grines, F. Laudenbach and O. Pochinka \cite{GrLauPo2012}, and on diffeomorphisms with a single saddle point with wild separatrices -- by C. Bonatti and V. Grines \cite{BoGr2000}. In both case the Heegaard's genus of the ambient manifold is 0, that is $M^3$ is the 3-sphere. 

In the present paper we consider the class $G$ of orientation-preserving Morse-Smale diffeomorphisms, defined on a closed 3-manifold $M^3$, whose non-wandering set consists of exactly four points of pairwise different Morse indices. It follows from the results of S. Smale \cite{Smale1961} and K. Meyer \cite{Meyer1968} that all gradient-like flows with similar properties (see Fig. \ref{pic1}) has a Morse energy function with exactly four critical points of pairwise different indices. It immediately implies that the supporting manifold $M^3$ for such flows admits a Heegaard decomposition of genus 1 and, therefore, it is homeomorphic to a lens space $L_{p,q}$ (see, for example, \cite{Fomad}).
\begin{figure}[h]\center{\includegraphics[width=0.5\linewidth]{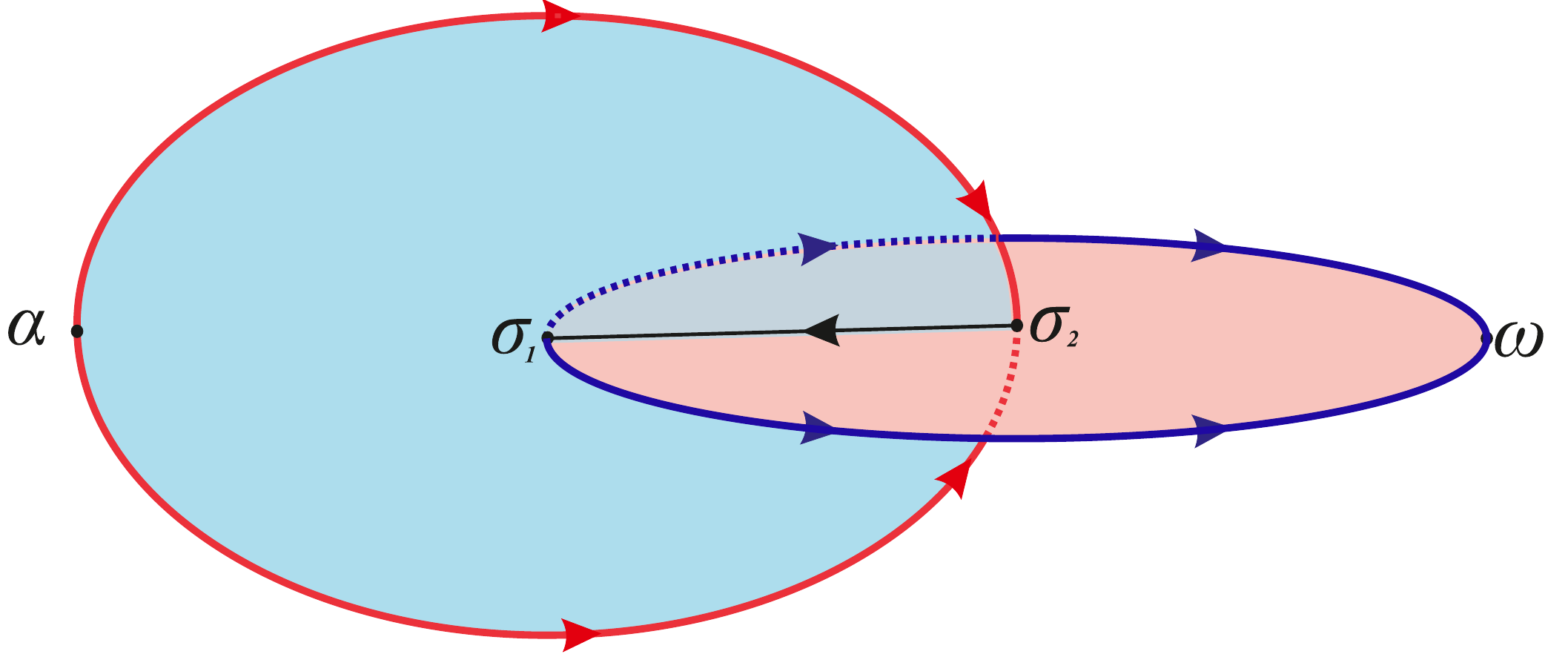}}\caption{Gradient-like 3-flow with four points of pairwise distinct Morse indices on lens $L_{1,0}\cong\mathbb S^3$}\label{pic1}\end{figure}

Despite the simple structure of the non-wandering set, there are diffeomorphisms in the class $G$ with wildly embedded saddle separatrices \cite{PoTaSh} (see Fig.  \ref{af2}). However, all currently known examples were constructed on the 3-sphere. 
\begin{figure}[h]\center{\includegraphics[width=0.5\linewidth]{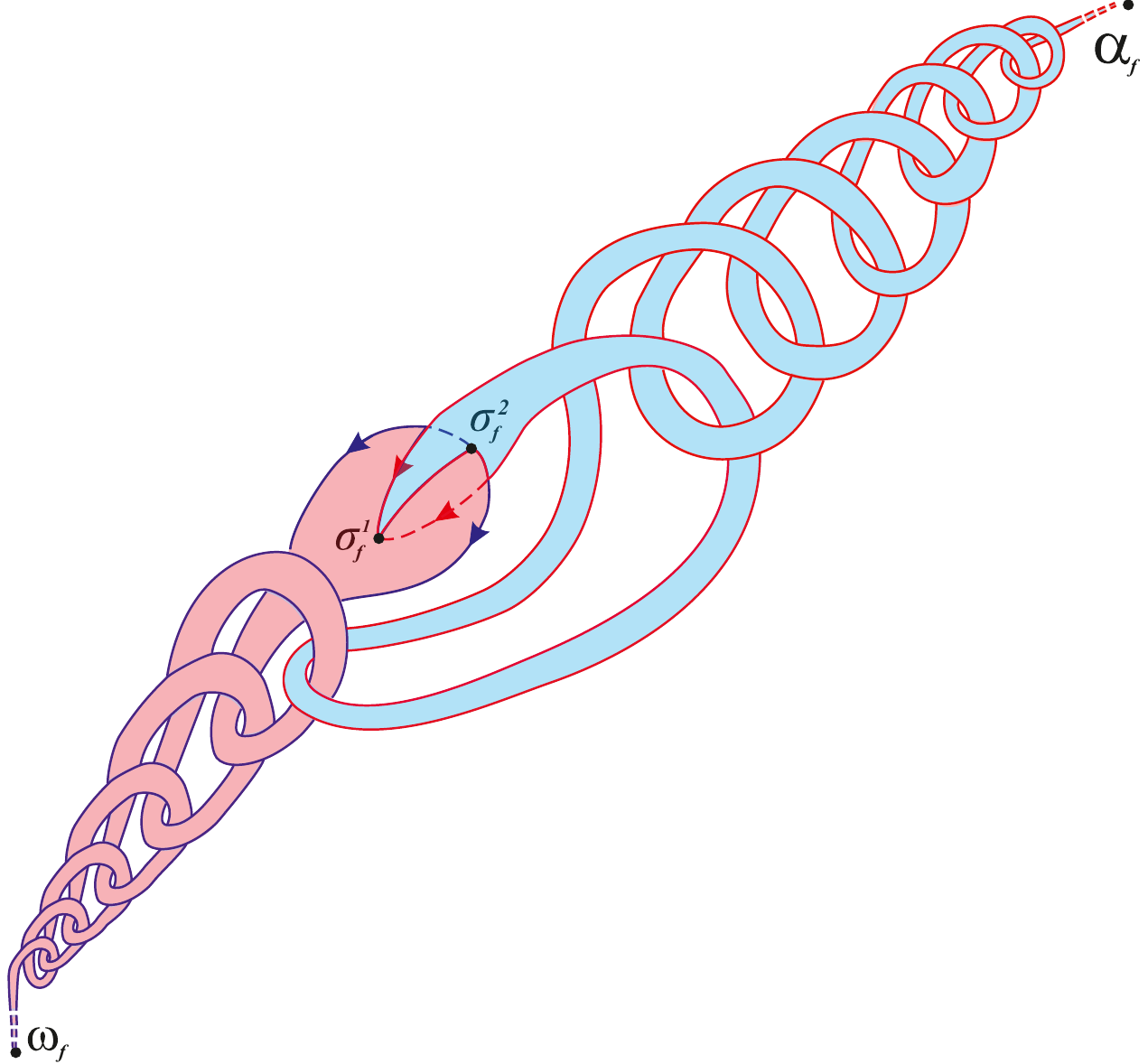}}\caption{Diffeomorphism
from the class $G$ with wildly embedded saddle separatrices}\label{af2}\end{figure}

One of the results of this paper is a constructive proof of the following fact.
\begin{theorem}\label{exi} On any lens space $L_{p,q}$ there exists a diffeomorphism $f\in G$ with wildly embedded one-dimensional saddle separatrices.
\end{theorem}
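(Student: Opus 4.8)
The plan is to leave the manifold untouched and change only the diffeomorphism: starting from a \emph{tame} model $\phi_0\in G$ on the given lens space $L_{p,q}$, one performs a Pixton-type modification supported in an arbitrarily small ball around the sink, turning one of the one-dimensional separatrices into a Fox--Artin type arc while keeping the other three fixed points, their Morse indices, the two-dimensional separatrices and the non-wandering set unchanged. The point is that the lens-space topology is carried by the genus-$1$ Heegaard splitting, which lives far from the sink, whereas a neighbourhood of the sink together with the restricted dynamics is just an open $3$-ball with a linear contraction --- the \emph{same} local picture for every $(p,q)$. Hence a wild-separatrix surgery built once near such a sink can be transplanted verbatim onto every $L_{p,q}$; on the $3$-sphere this is the construction of \cite{PoTaSh}.

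First I would produce the tame model. From the genus-$1$ Heegaard splitting of $L_{p,q}$ one obtains a Morse function with exactly one critical point of each index $0,1,2,3$; a generic small perturbation of its gradient flow is a Morse--Smale gradient-like flow with four equilibria $\omega,\sigma_1,\sigma_2,\alpha$ of those indices, and (after a further $C^1$-small perturbation) its time-one map is a diffeomorphism $\phi_0\in G$ all of whose separatrices are tamely embedded. Next I would fix the structure near the sink: $W^s(\omega)$ is an open ball, $\phi_0$ is topologically conjugate there to $x\mapsto x/2$, the orbit space $V_\omega:=(W^s(\omega)\setminus\{\omega\})/\phi_0$ is $\mathbb S^2\times\mathbb S^1$ (orientation is preserved), and since $\omega$ is the only sink, each of the two unstable separatrices of $\sigma_1$ lies in $W^s(\omega)$ and projects to an embedded circle in $V_\omega$ generating $\pi_1$; for $\phi_0$ this projected knot $\widehat{\ell}_1$ is the standard circle $\{s\}\times\mathbb S^1$.

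The surgery then replaces $\widehat{\ell}_1$ by the connected sum $\big(\{s\}\times\mathbb S^1\big)\#\,T$ with a non-trivial knot $T$ (a trefoil, say) tied inside a small ball of $V_\omega$, realised by a diffeomorphism that is the identity outside that ball and is isotopic to the identity; lifting back, this prescribes a new diffeomorphism $f$ which coincides with $\phi_0$ outside a neighbourhood of $\omega$ in its basin and whose new unstable separatrix $\ell_1^{f}$, as it approaches $\omega$, threads an infinite nested sequence of identical trefoil clasps shrinking to $\omega$ --- precisely the Fox--Artin model of an arc which is locally flat everywhere except at one endpoint. Because the modification is supported in the sink's basin, away from $\sigma_1,\sigma_2,\alpha$ and from the two-dimensional separatrices, and does not disturb the linear part at $\omega$, the map $f$ is again an orientation-preserving Morse--Smale diffeomorphism with the same four fixed points of the same indices and the same non-wandering set, i.e.\ $f\in G$; and it lives on $L_{p,q}$ because the manifold was never changed, so Theorem~\ref{exi} follows once the separatrix $\ell_1^{f}$ is shown to be wild.

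That verification is the main obstacle, and it has two faces. Dynamically, one must carry out the ``infinitely many clasps'' modification so that the resulting $f$ is a genuine smooth diffeomorphism --- in particular differentiable at $\omega$, where the accumulating clasps sit on ever smaller scales --- and so that no new non-wandering points or non-transverse intersections are created; this is the delicate part of the Pixton mechanism and is where I expect the bulk of the technical work to lie. Topologically, one must prove that $\overline{\ell_1^{f}}=\ell_1^{f}\cup\{\omega\}$ is genuinely not locally flat at $\omega$, i.e.\ that the nested trefoils do not ``comb out'' in the limit: here I would invoke the criterion, in the spirit of \cite{GrMeZh2003} and \cite{BoGr2000}, that the closure of a one-dimensional separatrix is tame if and only if its projection to the orbit space $\mathbb S^2\times\mathbb S^1$ of the corresponding sink (or source) is isotopic to $\{s\}\times\mathbb S^1$, together with the classical wildness of the Fox--Artin arc, to conclude that $\ell_1^{f}$ is wildly embedded. (Applying the dual surgery in a ball around the source $\alpha$ would, if desired, make the stable separatrix of $\sigma_2$ wild as well.)
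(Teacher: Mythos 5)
Your strategy is genuinely different from the paper's: instead of fixing a tame model on $L_{p,q}$ and re-knotting a separatrix near the sink, the paper never touches a concrete model at all --- it builds an abstract scheme whose knot data is a nontrivial Hopf knot (in $\mathbb S^2\times\mathbb S^1$ for $L_{0,1}$, and via a $\mathbb T^3$-based gluing for $p\neq 0$), invokes the realization theorem (Proposition \ref{rems3}) to obtain $f\in G$, identifies the supporting manifold as $L_{p,q}$ by Theorem \ref{TT1}, and gets wildness from Proposition \ref{wild}. Your route is the natural generalization of Pixton's and of \cite{PoTaSh}, but as written it has a genuine gap at its central step.

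The concrete problem is the sentence in which the re-knotting is ``realised by a diffeomorphism that is the identity outside that ball and is isotopic to the identity''. No self-diffeomorphism of the orbit space $V_\omega\cong\mathbb S^2\times\mathbb S^1$ --- isotopic to the identity or not --- can carry the standard generator circle onto $\bigl(\{s\}\times\mathbb S^1\bigr)\# T$: a homeomorphism of pairs preserves the homeomorphism type of the knot complement, and the standard generator has solid-torus complement while the locally knotted generator does not (its complement group contains the trefoil group, hence is not $\mathbb Z$). So the only mechanism you actually specify cannot produce the wild knot; in particular the tool available in this circle of ideas, Proposition \ref{IsotBan}, moves the projections of the \emph{other} saddle's invariant manifolds by an isotopy and likewise can never change the knot type of $\hat\ell_1$. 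What is really needed is to build a \emph{new} diffeomorphism whose separatrix follows a prescribed knotted arc in each fundamental shell accumulating on $\omega$ --- i.e. the full Pixton-type equivariant surgery, including smoothness at $\omega$, hyperbolicity, absence of new non-wandering points, and (for $p\geqslant 1$) control of $W^u_{\sigma_f^2}$, which accumulates on $cl(W^u_{\sigma_f^1})\ni\omega$, so the modification cannot in fact be kept ``away from the two-dimensional separatrices'' and transversality of $W^s_{\sigma_f^1}\cap W^u_{\sigma_f^2}$ must be re-verified. You explicitly defer exactly this work, but it is the entire content of the theorem; the paper's proof exists precisely to avoid it, by packaging the wild knot into an abstract scheme and letting Proposition \ref{rems3} and Theorem \ref{TT1} do the realization and the identification of the lens space. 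To repair your argument you would either have to carry out the equivariant surgery in detail or, as the paper does, replace the ``modification of a tame model'' by the scheme-realization machinery; the tameness criterion you quote at the end (wildness of $W^u_{\sigma_f^1}$ once the projected knot is a nontrivial Hopf knot, Proposition \ref{wild}) is the correct final step in either version.
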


According to \cite{GrLauPo2012} such diffeomorphisms do not have a Morse energy function, and the question of the topology of their supporting manifold has remained open until today. According to  \cite{GrMeZh2003}, in the case of tame embedding of the closures one-dimensional separatrices of the diffeomorphism $f\in G$, the supporting manifold $M^3$ is homeomorphic to the lens space  $L_{p,q}$. The wandering set of the diffeomorphism $f$ contains at least $p$ non-compact heteroclinic curves. 

In the present paper, a similar result is obtained for arbitrary diffeomorphisms of the class $G$. In more detail.

Let $f\in G$. It follows from the definition of the class that the non-wandering set of $f$ consists of exactly four points $\omega_f,\sigma_f^{1},\sigma_f^{2},\alpha_f$ with Morse indices $0,1,2,3$, respectively. Thus $f$ has exactly two saddle points $\sigma_f^1,\,\sigma_f^2$ of Morse indices 1 and 2, respectively, the intersection of two-dimensional manifolds of which forms a heteroclinic set $$H_f=W^s_{\sigma_f^1}\cap W^u_{\sigma_f^2}.$$
We introduce the concept of the heteroclinic index $I_f$ of $f$ as follows. If the set $H_f$ does not contain non-compact curves, then we assume $I_f=0$. Otherwise, any non-compact curve $\gamma\subset H_f$ contains, together with any point $x\in\gamma$, a point $f(x)$. We will consider the curve $\gamma$ oriented in the direction from $x$ to $f(x)$. We will also fix the orientation on manifolds $W^s_{\sigma_1}$ and $W^u_{\sigma_2}$. For a non-compact heteroclinic curve $\gamma$, we denote by $$v_\gamma=(\vec v^1_\gamma,\vec v^2_\gamma,\vec v^3_\gamma)$$ a triple of vectors with the origin at the point $x\in\gamma$ such that $\vec v^1_\gamma$ -- normal vector to $W^s_{\sigma_1}$, $\vec v^2_\gamma$ -- normal vector to $W^u_{\sigma_2}$ and $\vec v^3_\gamma$ -- tangent vector to the oriented curve $\gamma$. Let's put $I_\gamma=+1\,(I_\gamma=-1)$ in the case of right (left) orientation of $v_\gamma$. The number $$I_{f}=\left|\sum\limits_{\gamma\subset H_f}I_\gamma\right|$$ is called {\it the heteroclinic index of diffeomorphism $f$}. For an integer $p\geqslant 0$, we denote by $G_p\subset G$ a subset of diffeomorphisms $f\in G$ such that $I_{f}=p$ (see Fig. \ref{neor}).
\begin{figure}[h]
\center{\includegraphics[width=0.5\linewidth]{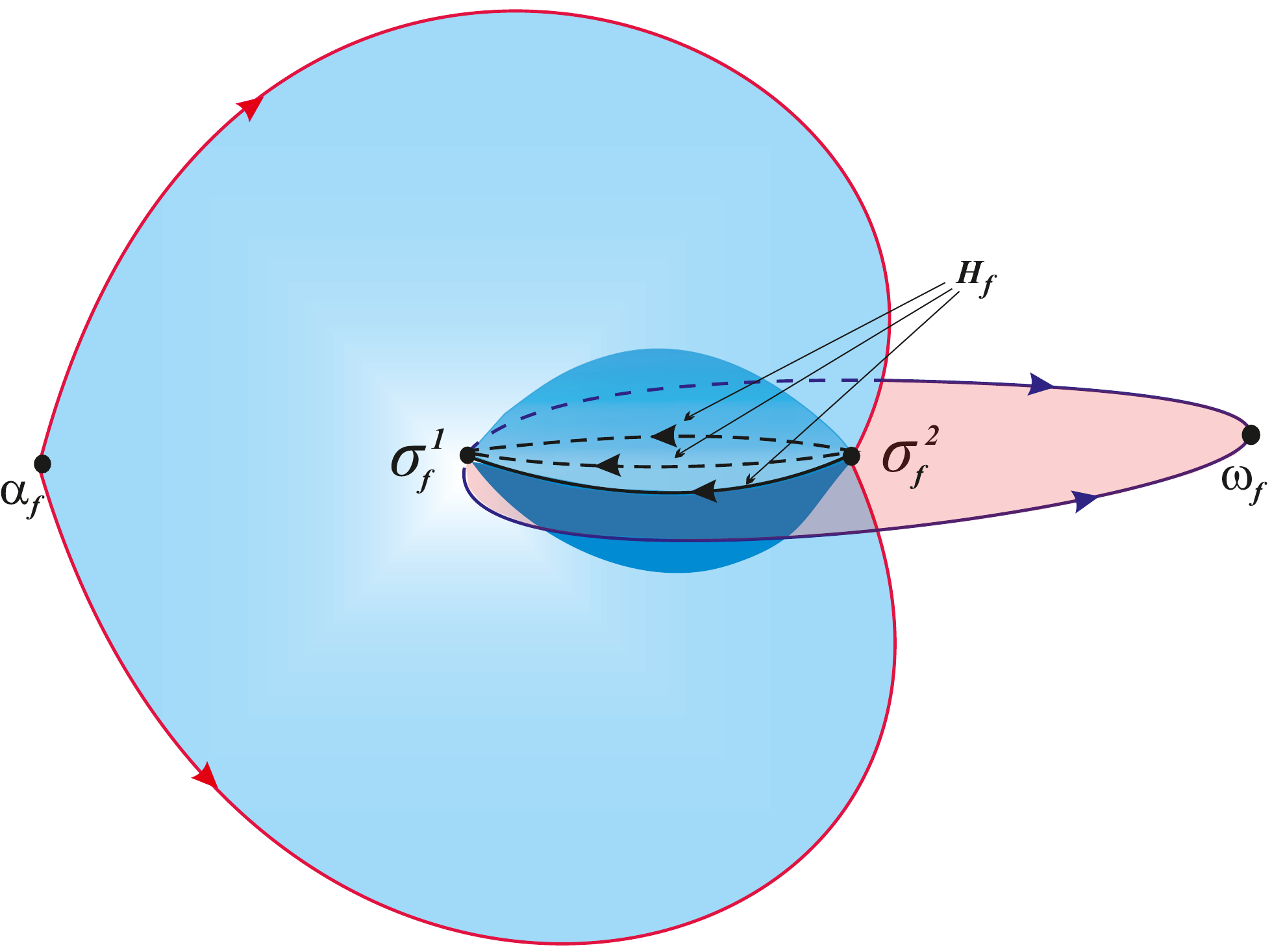}}
\caption{\small Diffeomorphism $f\in G$ with non-orientable set $H_f$ consisting of three non-compact curves and heteroclinic index 1}
\label{neor}
\end{figure}  
The main result of the work is the proof of the following fact.
\begin{theorem}\label{TT1} If a  manifold $M^3$ admits a diffeomorphism ${f}\in{G}_p$ then  $M^3$ is homeomorphic to a lens space $L_{p,q}$.
\end{theorem}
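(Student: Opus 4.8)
The plan is to compute $\pi_1(M^3)$ and then invoke the classification of closed $3$-manifolds with cyclic fundamental group; the subtle point is that, although the separatrices may be wild, $\pi_1(M^3)$ can be read off from a \emph{tame} auxiliary manifold, namely the orbit space of the sink basin. First I set up the reduction. From the definition of $G$ and standard Morse--Smale theory: $W^s_{\omega_f}$ and $W^u_{\alpha_f}$ are open $3$-balls; the sets $\ell_1:=\operatorname{cl}(W^u_{\sigma^1_f})$, $\ell_2:=\operatorname{cl}(W^s_{\sigma^2_f})$ are topological circles (through $\omega_f,\sigma^1_f$, resp.\ $\alpha_f,\sigma^2_f$), possibly wildly embedded; and $\mathcal A:=\operatorname{cl}(W^u_{\sigma^2_f})=W^u_{\sigma^2_f}\cup\ell_1$ is a compact invariant attractor (union of unstable manifolds over the down-set $\{\omega_f,\sigma^1_f,\sigma^2_f\}$ in the Smale order) whose basin is $M^3\setminus\{\alpha_f\}$. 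Choose a trapping neighbourhood $Q$ of $\mathcal A$ with smooth boundary, $f(Q)\subset\operatorname{int}Q$, $\bigcap_{n\ge 0}f^{n}(Q)=\mathcal A$, $\bigcup_{n\ge 0}f^{-n}(Q)=M^3\setminus\{\alpha_f\}$. Then $U:=M^3\setminus\operatorname{int}Q$ is a trapping neighbourhood of the repelling point $\alpha_f$ sitting inside the open ball $W^u_{\alpha_f}$, so by Alexander's theorem $U$ is a $3$-ball, $\partial Q\cong S^2$, and $M^3$ is $Q$ with a $3$-ball glued on $\partial Q$; since $M^3\setminus\{\alpha_f\}$ deformation retracts onto $Q$ (push along the dynamics), van Kampen gives $\pi_1(M^3)\cong\pi_1(M^3\setminus\{\alpha_f\})\cong\pi_1(Q)$. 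It remains to show $\pi_1(Q)\cong\mathbb Z/p$ (convention: $\mathbb Z/0:=\mathbb Z$).

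For the group computation, write $Q$ as a neighbourhood of $\omega_f$ together with a neighbourhood of the two rays $W^u_{\sigma^1_f}\setminus\{\sigma^1_f\}$ and of $\sigma^1_f$ (a "solid-torus-type" region $T$ containing the circle $\ell_1$, with $\pi_1(T)\cong\mathbb Z$ generated by the class $t$ of $\ell_1$), and a neighbourhood of the plane $W^u_{\sigma^2_f}$ and of $\sigma^2_f$. Since $W^u_{\sigma^2_f}\cong\mathbb R^2$ is contractible and attaches to $T$ along its behaviour at infinity, $Q$ is homotopy equivalent to $T$ with a single $2$-cell attached along a circle representing $t^{\,p}$, where $p$ is the net number of sheets of $W^u_{\sigma^2_f}$ accumulating on $\ell_1$: each non-compact heteroclinic curve $\gamma\subset H_f$ corresponds to one such sheet winding along $\ell_1$, and the coorientation triple $(\vec v^1_\gamma,\vec v^2_\gamma,\vec v^3_\gamma)$ records exactly the sign $I_\gamma=\pm1$ of that winding, so $p=\bigl|\sum_\gamma I_\gamma\bigr|=I_f$. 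To make this rigorous without assuming tameness, I would carry the bookkeeping out in the orbit space $\widehat V_\omega:=(W^s_{\omega_f}\setminus\{\omega_f\})/f$: because $\omega_f$ is a hyperbolic sink and $f$ preserves orientation, $\widehat V_\omega\cong S^2\times S^1$ is smooth, the two rays of $W^u_{\sigma^1_f}$ project to a tame knot $K$ generating $H_1(\widehat V_\omega)$, the surface $W^u_{\sigma^2_f}\cap W^s_{\omega_f}$ projects to a tame surface accumulating on $K$, and the integer $p$ reappears there as the algebraic intersection/winding number of this surface with $K$. Transferring back through the projection yields $\pi_1(Q)\cong\langle t\mid t^{\,p}\rangle\cong\mathbb Z/p$, hence $\pi_1(M^3)\cong\mathbb Z/p$.

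Finally the conclusion. If $p\ge 1$, $M^3$ is a closed orientable $3$-manifold with finite cyclic fundamental group, so by the elliptization theorem $M^3\cong S^3/\Gamma$ with $\Gamma\cong\mathbb Z/p$ acting freely and linearly, i.e.\ $M^3$ is a lens space $L_{p,q}$ for some $q$ coprime to $p$ (for $p=1$ this is $S^3=L_{1,0}$). If $p=0$, then $\pi_1(M^3)\cong\mathbb Z$, which for a closed orientable $3$-manifold forces $M^3\cong S^2\times S^1=L_{0,1}$ (it is prime with infinite $\pi_1$ of cohomological dimension one, hence not irreducible). In every case $M^3\cong L_{p,q}$, which is the assertion of the theorem.

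The main obstacle is precisely the wildness, concentrated in Step 2: one may not assume that $\ell_1$, $\ell_2$ or $\mathcal A$ are CW complexes, that $T$ (a neighbourhood of the wild circle $\ell_1$) is literally a solid torus, or that small round spheres about $\omega_f$ and $\alpha_f$ meet the separatrices in the expected number of points, so the homotopy model $Q\simeq T\cup_{z\mapsto z^{p}}D^2$ cannot be taken for granted. The remedy is to push all of the topology (the knot $K$, the projected $2$-separatrix, the winding count, and the construction of the solid-torus-type neighbourhood of $\ell_1$) down into the tame manifolds $\widehat V_\omega\cong S^2\times S^1$ and, symmetrically, $\widehat V_\alpha\cong S^2\times S^1$, perform everything there, and then lift back; this confines the wildness to the interiors of the two trapping balls at $\omega_f$ and $\alpha_f$, where it does not affect $\pi_1$. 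Verifying that this passage to orbit spaces loses no information relevant to the fundamental group — in particular that $\pi_1(T)\cong\mathbb Z$ is still generated by $[\ell_1]$ and that the attaching class of the $2$-cell coming from $W^u_{\sigma^2_f}$ is $t^{\,p}$ with $p=I_f$ — is where the bulk of the technical work lies and is the step most likely to require delicate arguments distinguishing tame and wild embeddings.
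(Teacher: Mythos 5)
Your route is genuinely different from the paper's: you want to compute $\pi_1(M^3)\cong\mathbb Z/p$ from a trapping region $Q$ of the attractor $cl(W^u_{\sigma^2_f})$ and then quote elliptization plus the classification of closed orientable $3$-manifolds with cyclic fundamental group, whereas the paper never computes $\pi_1$ at all: it first deforms $f$ through an arc in $Diff(M^3)$ (Theorem \ref{te1}, via Lemmas \ref{te11}, \ref{te12} and Proposition \ref{IsotBan}) to a diffeomorphism $f_+\in G^+_p$ whose heteroclinic set is orientable with the same index, and only then builds an explicit genus-one Heegaard splitting $M^3=Q_1\cup Q_2$, reading off $L_{p,q}$ from the class of the curve $\mu_2$ on $\partial Q_1$. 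The difference in cost is real: your endgame needs Perelman's elliptization and the Poincar\'e conjecture, while the paper stays within elementary $3$-manifold topology and, as a by-product, produces the splitting itself.

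The genuine gap is the central claim that $Q$ is homotopy equivalent to a solid torus with a single $2$-cell attached along $t^{\,p}$ with $p=I_f$. The end of the single disk $W^u_{\sigma^2_f}$ accumulates on the whole circle $\ell_1$, in general along infinitely many sheets; there is no attaching map, and the homotopy type of a neighborhood of this set is not determined by a ``net winding number'' unless one first shows that the compact (inessential) heteroclinic curves and the pairs of oppositely oriented non-compact ones contribute nothing. That cancellation is exactly the information the signed sum $I_f$ discards, and it is exactly what the paper's Section \ref{sec-tri} supplies: Lemma \ref{te11} removes contractible heteroclinic curves using irreducibility of $\hat V_f$ and an innermost-disk sphere, Lemma \ref{te12} removes a pair $c_+,c_-$ by an annulus exchange in $\hat V_{\omega_f}\cong\mathbb S^2\times\mathbb S^1$ (resting on Proposition \ref{f_1}), and Proposition \ref{IsotBan} converts these surgeries of the scheme into an arc of diffeomorphisms, so that $M^3$ is unchanged; only after this does the intersection $W^u_{\sigma^2_f}\cap\partial Q_1$ become a single curve, which is what makes the ``one relation $t^{\,p}$'' picture true. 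Your proposal to ``do the bookkeeping in $\hat V_{\omega_f}$ and lift back'' names the right ambient space but contains no mechanism for this cancellation (for instance, for the diffeomorphism of Fig.~\ref{neor}, with three non-compact curves and $I_f=1$, the $2$-separatrix meets the boundary of the dynamically constructed solid torus in several curves, and reducing to one is the whole difficulty), so as written the key identification $\pi_1(Q)\cong\langle t\mid t^{\,p}\rangle$ is asserted rather than proved. Secondary, fixable points: an arbitrary smooth-boundary trapping neighborhood $Q$ of the attractor need not have $\partial Q\cong\mathbb S^2$, so the appeal to Alexander's theorem is unjustified as stated (take instead $Q$ to be the complement of a round linearizing ball at $\alpha_f$), and the claimed deformation retraction of $M^3\setminus\{\alpha_f\}$ onto $Q$ also needs an argument rather than ``push along the dynamics''.
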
  

{\it Acknowledgments.} The study was supported by a grant from the Russian Science Foundation, contract 23-71-30008.

\section{Necessary definitions and facts}
\subsection{Topology}
For any subset $X$ of the topological space $Y$ we will denote by $i_X:X\to Y$ the {\it inclusion map}. For any continuous map $\phi:X\to Y$ from the topological space $X$ to the topological space $Y$ will be denoted by $\phi_*:\pi_1(X)\to\pi_1(Y)$ its {\it induced homomorphism}.

By {\it $C^{r}$-embedding} $(r\geqslant 0)$ of a  manifold $X$ into a manifold $Y$  is called a map  $f:X\to Y$ such that $f:X\to f(X)$ is a $C^r$-diffeomorphism. $C^0$-embedding is also called a {\it topological embedding}.

The topological embedding $\lambda:X\longrightarrow Y$ of an  $m$-manifolds $X$ into an $n$- manifold $Y$ ($n\leqslant m$) is called {\it locally flat at a point} $\lambda(x)$, $x\in X$, if the point $\lambda(x)$ belongs to  a local chart $(U,\psi)$ of the manifold $Y$, that $\psi (U\cap\lambda(X)) =\mathbb{R}^{m}$, where $\mathbb{R}^{m}\subset\mathbb{R}^{n}$ -- the set of points whose last $n-m$ coordinates are 0 or $\psi(U\cap\lambda(X)) = R^{m}_{+}$, where $\mathbb{R}^{m}_{+}\subset\mathbb{R}^{m}$ -- the set of points whose last coordinate is non-negative. The embedding $\lambda$ is called {\it tame}, and a manifold $X$ is {\it tamely embedded}, if $\lambda$ is locally flat at every point $\lambda(x)$, $x\in X$. Otherwise, the embedding  $\lambda$ is called {\it wild}, and the manifold $X$ is {\it wildly embedded}. Any point $\lambda(x)$ that is not locally flat is called a {\it wildness point}.

Let 
$\mathbb{D}^{n} = \{(x_{1},\dots, x_{n})\in \mathbb{R}^{n}:\sum\limits_{i=1}^{n}x^{2}_{i}\leqslant 1\}$ --- {\it is a standard $n$-disk  (ball)}, $\mathbb{D}^{0} = \{0\}$, 
$\mathbb{S}^{n-1} = \{(x_{1},\dots, x_{n})\in \mathbb{R}^{n}:\sum\limits_{i = 1}^{n}x^{2}_{i} = 1\}$ --- {\ it is a standard $(n-1)$-sphere}, $\mathbb{S}^{-1} = \emptyset$. 

\begin{proposition}[\cite{BoGrMePe2002}, Lemma 2.1]\label{sphere-good} Let $\lambda\colon\mathbb{S}^2
\to M^3$ be a topological embedding that is smooth everywhere except at one point $s_{0}$, $x _{0}= \lambda(s_{0})$, $\Sigma=\lambda(\mathbb{S}^2) $, $y_0\in\Sigma\setminus\{x_0\} $ be a fixed point and $V$ be a fixed neighborhood of the sphere $\Sigma$. Then there exists a smooth 3-ball $B$ contained in $V$ such that $x_0\in B$ and $\partial B$ transversally intersects $\Sigma$ along a single curve separating the points $x_0$ and $y_0$ in $\Sigma$.\end{proposition}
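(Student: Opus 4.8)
The plan is to exploit the fact that although $\Sigma$ is only topologically embedded, the ambient point $x_0$ is an ordinary smooth point of $M^3$: the wildness is a property of $\Sigma$, not of $M^3$. Hence I would fix a smooth chart around $x_0$ contained in $V$ and, for a generic small radius $r$, take the round ball $B_0=\{\rho<r\}\subset V$ centred at $x_0$, where $\rho$ is the Euclidean distance in the chart. Choosing $r$ small guarantees $x_0\in\mathrm{int}\,B_0$ and $y_0\notin B_0$. Since $x_0\notin\partial B_0$, the intersection $\partial B_0\cap\Sigma$ lies entirely in the smooth part $\Sigma\setminus\{x_0\}$; applying Sard's theorem to $\rho|_{\Sigma\setminus\{x_0\}}$ I may assume $r$ is a regular value, so that $\partial B_0$ meets $\Sigma$ transversally in a finite disjoint union of smooth circles $c_1,\dots,c_k$. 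This $\partial B_0$ is already a smooth sphere bounding a ball with $x_0$ inside and $y_0$ outside; the whole problem is reduced to removing all intersection circles but one.

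The core of the argument is a reduction of $k$ to $1$ by ambient isotopies of $\partial B_0$ supported away from $x_0$ and $y_0$, keeping $x_0$ inside, $y_0$ outside, and the ball inside $V$. The circles cut the sphere $\Sigma$ into regions whose adjacency graph is a tree; each circle is either \emph{inessential} (both $x_0,y_0$ on one side) or \emph{essential} (it separates $x_0$ from $y_0$), and the essential circles are necessarily nested, cutting $\Sigma$ into $D_{x}\ni x_0$, a chain of annuli, and $D_{y}\ni y_0$. I would first remove all inessential circles: if one exists then some leaf region of the tree is a disk $\Delta$ avoiding both $x_0$ and $y_0$, hence a \emph{smooth} innermost disk on $\Sigma$; pushing the corresponding disk of $\partial B_0$ across $\Delta$ is an isotopy supported near $\Delta$ that cancels that circle without disturbing $x_0$ or $y_0$. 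Iterating, I am left with only nested essential circles $c_1,\dots,c_m$.

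To finish I must reduce the essential circles to one, and this is the step where the wildness genuinely bites: the innermost disk on $\Sigma$ towards $x_0$ is the wild disk $D_x$, which cannot be used for surgery, so the previous move is unavailable. I would instead use a parity/orientation argument. Travelling along $\Sigma$ from $x_0$ to $y_0$ one crosses $c_1,\dots,c_m$ in order, switching between the inside and the outside of the ball at each crossing; since $x_0$ is inside and $y_0$ outside, $m$ is odd, and in particular consecutive essential circles bound, on $\Sigma$, a smooth annulus lying entirely on one side of $\partial B_0$ and containing neither marked point. Taking a product neighbourhood of such an annulus, I can isotope $\partial B_0$ across it so as to cancel the pair of circles simultaneously, again without moving $x_0$ or $y_0$ across the boundary. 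This decreases $m$ by $2$; repeating (possible because $m$ is odd) terminates at exactly one essential circle $c$.

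The resulting smooth sphere bounds a ball $B\subset V$ with $x_0\in\mathrm{int}\,B$ and $y_0\notin B$, meeting $\Sigma$ transversally along the single circle $c$; since $c$ has $x_0$ and $y_0$ on opposite sides it separates them in $\Sigma$, which is exactly the assertion. The main obstacle, as indicated, is precisely the essential-circle reduction: every smoothing tool (innermost disks, collars, product neighbourhoods) is available only in the smooth part $\Sigma\setminus\{x_0\}$, so the wild disk $D_x$ must be enclosed rather than surgered, and controlling that the final, irremovable circle is a single essential curve—rather than an uncontrolled nest—requires the parity count together with the annulus cancellation, plus the routine but necessary bookkeeping that all isotopies remain inside $V$ and fix the inside/outside status of $x_0$ and $y_0$.
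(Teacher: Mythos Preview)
The paper does not prove this proposition; it is quoted from \cite{BoGrMePe2002} (Lemma~2.1) without argument, so there is no in-paper proof to compare against. Your outline---generic small round ball in a chart, Sard for transversality, innermost-disk removal of inessential circles, then parity plus annulus cancellation for the essential ones---is the standard route and is almost certainly what the cited reference does.

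One point deserves more care than ``routine bookkeeping'': a push across a disk or annulus of $\Sigma$ lying \emph{outside} $B_0$ can carry $\partial B_0$ out of the chart, and outside the chart you lose the Schoenflies/Alexander guarantee that the modified sphere still bounds a ball (and you may leave $V$). The clean fix is to perform only inward moves. By your own parity count, whenever $m\ge 3$ some consecutive annulus $A_i\subset\Sigma$ lies \emph{inside} $B_0$; replacing the corresponding annulus on $\partial B_0$ by a pushoff of $A_i$ yields a sphere contained in the $3$-ball $B_0\subset V$, which therefore bounds a ball there, and the solid-torus region you discard is disjoint from the wild disk $D_x$, so $x_0$ remains inside. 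With this adjustment (and the analogous care for inessential circles) the argument goes through.
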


A topologically embedded into $n$-manifold $X$ $(n-1)$-sphere ${S}^{n-1}$ is called {\it cylindrical or cylindrically embedded} if there is a topological embedding $e:\mathbb{S}^{n-1}\times[-1,1]\to X$ such that $e(\mathbb{S}^{n-1}\times\{0\}) ={S}^{n-1}$.

An $n$-manifold $X$ is called {\it irreducible} if any $(n-1)$-sphere cylindrical embedded in $X$ bounds an $n$-ball there.

A 3-manifold $X$ is called {\it simple} if it is either irreducible or homeomorphic to $\mathbb{S}^{2}\times{\mathbb{S}^{1}}.$

A surface $F$ topologically embedded into a $3$-manifold $X$ is called {\it properly embedded} if $\partial X\cap F = \partial F$. A properly  embedded into $X$  a surface $F$ is called {\it compressible} in $X$ in one of the following two cases: 

1) there is a non-contractible  simple closed curve $c\subset int F$ and an embedded 2-disk $D\subset int X$ such that $D\cap F = \partial D = c$; 

2) there is a 3-ball $B\subset int X$ such that $F = \partial B$. 

A surface $F$ is called {\it incompressible} in $X$ if it is not compressible in $X$. 

\begin{proposition}[\cite{BoGr2000}, Theorem 4]\label{f_1} Let $T$ be a two-dimensional torus smoothly embedded in the manifold $\mathbb{S}^{2}\times\mathbb{S}^{1}$ so that $i_{T*}(\pi_1(T))\neq 0$. Then $T$ is a boundary of a solid torus, smoothly embedded into $\mathbb{S}^{2}\times\mathbb{S}^{1}$.
\end{proposition}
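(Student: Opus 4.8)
The plan is to reduce the statement to the classification of embedded $2$-spheres in $N:=\mathbb S^2\times\mathbb S^1$ by compressing $T$. Recall that $\pi_1(N)\cong\mathbb Z$. Since $\pi_1(T)\cong\mathbb Z^2$ cannot inject into $\mathbb Z$, the kernel of $i_{T*}\colon\pi_1(T)\to\pi_1(N)$ is nontrivial, so it has rank $1$; as $T$ is two-sided (both $T$ and $N$ are orientable), the Loop Theorem of Papakyriakopoulos provides an embedded compressing disk $D\subset N$ with $\partial D=c\subset T$ an essential simple closed curve and $\operatorname{int}D\cap T=\emptyset$. Thus $T$ is compressible. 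The curve $c$ is primitive in $H_1(T)$ and lies in $\ker i_{T*}$, hence generates it; choosing a complementary generator $\lambda$ of $\pi_1(T)$, the hypothesis $i_{T*}(\pi_1(T))\neq 0$ records exactly that $i_{T*}(\lambda)\neq 0$.

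First I would compress $T$ along $D$. Since $c$ is essential, hence non-separating on $T$, the surgered surface $S$ obtained by deleting an annular neighbourhood of $c$ and capping off with two parallel copies $D',D''$ of $D$ is a single smoothly embedded $2$-sphere. Conversely, $T$ is recovered from $S$ by attaching a $1$-handle (a tube) along $D'\cup D''$, so that $T=\bigl(S\setminus(D'\cup D'')\bigr)\cup(\text{tube})$. I would then invoke the classification of $2$-spheres in the prime manifold $N$: every smoothly embedded $2$-sphere either bounds a $3$-ball or is non-separating and isotopic to the fibre sphere $\mathbb S^2\times\{*\}$, the two cases being distinguished by the class in $H_2(N;\mathbb Z)\cong\mathbb Z$.

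If $S$ bounds a ball $B$, re-attaching the tube exhibits $T$ as the boundary of the region $V$ obtained from $B$ by adding the $1$-handle. Since a $3$-ball with one $1$-handle attached is a solid torus, $T=\partial V$ bounds a smoothly embedded solid torus; the same conclusion holds whether the handle is attached inside or outside $B$, because in the second case the complementary region $B\setminus\nu(\text{handle})$ is again a solid torus (a ball with an unknotted tube removed). Finally $i_{T*}(\lambda)\neq0$ forces the core of $V$ to be homotopically essential in $N$, which is the content needed for the applications. This disposes of the theorem whenever the compression yields a trivial sphere.

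The main obstacle is therefore to exclude the essential-sphere case, i.e. that the compression produces a sphere isotopic to $\mathbb S^2\times\{*\}$. This is precisely where $N$ fails to be irreducible, so the elementary reconstruction above does not by itself apply, and I regard this as the hard step. Because compression preserves the class in $H_2(N;\mathbb Z)$, the sphere $S$ bounds a ball exactly when $[T]=0$, that is, when $T$ separates $N$; so the crux is to establish, under the standing hypotheses, that $T$ is separating. To attack this I would cut $N$ along a fixed fibre sphere chosen transverse to $T$, obtaining the product $\mathbb S^2\times[0,1]$ in which properly embedded arcs joining the two ends are standard, and use the winding information carried by $i_{T*}(\lambda)$ to control the position of the compressing tube and force $S$ onto the ball-bounding side of the dichotomy. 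Once this is secured, the reconstruction of the previous paragraph completes the identification of $T$ as the boundary of a smoothly embedded solid torus.
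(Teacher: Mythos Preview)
The paper does not prove this proposition; it is quoted from \cite{BoGr2000} without argument, so there is no in-paper proof to compare against. I will therefore assess your proposal on its own.

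Your strategy is the standard one and is sound: compress $T$ along a disk $D$ with $\partial D=c$ supplied by the Loop Theorem, obtain a sphere $S$ homologous to $T$, and appeal to the dichotomy for spheres in $\mathbb S^2\times\mathbb S^1$. However, two steps are not actually carried out.

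\emph{First}, the exclusion of the essential-sphere case is left as a sketch. It has a one-line resolution that you almost state but do not execute: pick a simple closed curve $\lambda\subset T$ with $i_{T*}[\lambda]\neq 0$ in $H_1(\mathbb S^2\times\mathbb S^1)\cong\mathbb Z$ and push it off $T$ in the normal direction to a disjoint curve $\lambda'$. Then the algebraic intersection number $[\lambda']\cdot[T]=0$ since $\lambda'\cap T=\emptyset$; but writing $[T]=n[\mathbb S^2\times\{*\}]$ and $[\lambda']=k[\{*\}\times\mathbb S^1]$ with $k\neq 0$, Poincar\'e duality gives $[\lambda']\cdot[T]=nk$, hence $n=0$. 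Thus $[T]=0$, $T$ separates, and the compressed sphere $S$ bounds a ball. No cutting along a fibre sphere is needed.

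\emph{Second}, your treatment of the reconstruction contains an unjustified claim. When the $1$-handle $\nu(D)$ lies \emph{inside} the ball $B$ bounded by $S$, you assert that $B\setminus\nu(D)$ is a solid torus because the tube is ``unknotted''; but nothing in the construction prevents the core arc of $\nu(D)$ from being knotted in $B$, and then $B\setminus\nu(D)$ is a nontrivial knot exterior. The correct way to dispose of this sub-case is again the hypothesis: if $\nu(D)\subset B$ then the side $W$ of $T$ not containing $D$ satisfies $\overline W\subset\overline{W}\cup\nu(D)=\overline B$, so $T=\partial\overline W\subset\overline B$ and every loop on $T$, in particular $\lambda$, is null-homotopic in $N$, contradicting $i_{T*}[\lambda]\neq 0$. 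Hence the handle must lie outside $B$, the side $U$ containing $D$ equals $B$ with a $1$-handle attached, and that is always a solid torus. With these two fixes your argument is complete.
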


\begin{proposition}[\cite{BoGr2000}, Lemma 3.1]\label{ss2s1} Let $S$ be a two-dimensional sphere cylindrical embedded in the manifold $\mathbb{S}^{2}\times\mathbb{S}^{1}$. Then $S$ either bounds a 3-ball there, or is ambiently isotopic to the sphere $\mathbb S^2\times\{s_0\},\,s_0\in\mathbb S^1$.
\end{proposition}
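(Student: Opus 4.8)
The plan is to study $S$ through its intersections with a fixed standard essential sphere, feeding every innermost surgery into an auxiliary $3$-sphere where Schoenflies' theorem guarantees that the relevant two-spheres bound balls. Since $S$ is cylindrically embedded it is bicollared, hence locally flat, so it may be smoothed; thus I may assume $S$ is a smooth two-sphere. Fix the standard essential sphere $\Sigma=\mathbb S^2\times\{s_0\}$ and recall $H_2(\mathbb S^2\times\mathbb S^1;\mathbb Z)=\mathbb Z\langle[\Sigma]\rangle$, so that $[S]=d\,[\Sigma]$ for some integer $d$, with $S$ separating $\mathbb S^2\times\mathbb S^1$ precisely when $d$ is even. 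The two alternatives of the statement will correspond to $S$ bounding a ball (the inessential case, $d=0$) and to $S$ being nonseparating and parallel to $\Sigma$ (the case $d=\pm1$). First I would isotope $S$ to be transverse to $\Sigma$ and, among all such positions, choose one minimizing the number $n$ of circles of $S\cap\Sigma$.

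Second, I would reduce to $n=0$ by an innermost-disk argument. If $n>0$, choose a circle $c\subset S\cap\Sigma$ innermost on $\Sigma$, bounding a disk $D\subset\Sigma$ with $\operatorname{int}D\cap S=\varnothing$; on the sphere $S$ the curve $c$ bounds two disks $D_1,D_2$. Surgering $S$ along $D$ replaces it by the two spheres $S_i=(S\setminus D_i)\cup D$, each pushed slightly off $\Sigma$ and meeting $\Sigma$ in strictly fewer circles. To realize such a surgery by an ambient isotopy that lowers $n$, one must know that a suitable complementary two-sphere bounds a $3$-ball, and here lies the only genuine difficulty: $\mathbb S^2\times\mathbb S^1$ is reducible, so not every embedded two-sphere bounds a ball. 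I would neutralize this by cutting $\mathbb S^2\times\mathbb S^1$ along $\Sigma$ to obtain $W=\mathbb S^2\times[0,1]$ and capping its two boundary spheres with $3$-balls, producing $\widehat W\cong\mathbb S^3$; any two-sphere assembled from $D$ and a subdisk of $S$ lies in the compact part of a complementary $\mathbb S^2\times\mathbb R$ and may be viewed inside this $\mathbb S^3$, where by the Schoenflies theorem it bounds a ball. Pushing $D_i$ across such a ball past $\Sigma$ strictly decreases $n$, so minimality forces $n=0$ from the start.

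Finally, once $S\cap\Sigma=\varnothing$, the sphere lies in $N:=(\mathbb S^2\times\mathbb S^1)\setminus\Sigma\cong\mathbb S^2\times\mathbb R$. Choosing $R$ so large that $S\subset\mathbb S^2\times(-R,R)$ and capping $\mathbb S^2\times[-R,R]$ at both ends again yields a copy of $\mathbb S^3$ containing $S$; by Schoenflies $S$ bounds a ball $B$ there, and both complementary regions are balls. If the ball on one side of $S$ is disjoint from both caps it lies in $N$, so $S$ bounds a $3$-ball in $\mathbb S^2\times\mathbb S^1$, which is the first alternative. Otherwise $S$ separates the two caps, that is, the two ends of $N$; then $d=\pm1$, $S$ is nonseparating, and I would finish by showing that any sphere separating the ends of $\mathbb S^2\times\mathbb R$ is isotopic to a level sphere $\mathbb S^2\times\{t\}$, again by an innermost-disk comparison with a fixed level whose trivial intersection circles are removed using balls supplied by Schoenflies exactly as above. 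Transporting this isotopy back to $\mathbb S^2\times\mathbb S^1$ identifies $S$ with $\mathbb S^2\times\{s_0\}$ up to ambient isotopy, which is the second alternative. The main obstacle throughout is purely the reducibility of $\mathbb S^2\times\mathbb S^1$, handled uniformly by performing every ball-finding step inside an auxiliary $\mathbb S^3$ obtained by capping off level spheres.
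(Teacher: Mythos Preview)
The paper does not give its own proof of this proposition: it is quoted verbatim as Lemma~3.1 of \cite{BoGr2000} and used as a black box. There is therefore nothing in the present paper to compare your argument against.

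Your outline is the standard one and is essentially sound, but one step needs more care. In the innermost-disk reduction you take $c$ innermost on $\Sigma$, with disk $D\subset\Sigma$, and then assert that ``any two-sphere assembled from $D$ and a subdisk of $S$ lies in the compact part of a complementary $\mathbb S^2\times\mathbb R$'' and hence may be viewed in the capped $\mathbb S^3$. This is not justified as written: the companion disk $D_i\subset S$ may well meet $\Sigma$ in further circles of $S\cap\Sigma$, so the sphere $D\cup D_i$ need not lie in a single component of $(\mathbb S^2\times\mathbb S^1)\setminus\Sigma$, and cutting along $\Sigma$ will slice through it. Two easy repairs are available. First, you can instead choose $c$ innermost on $S$; then the $S$-disk has interior disjoint from $\Sigma$ and, together with either of the two disks it bounds on $\Sigma$, genuinely sits in one copy of $\mathbb S^2\times[0,1]$, where your capping-and-Schoenflies argument goes through (one then checks that the resulting ball can be taken disjoint from the rest of $S$ by a further innermost choice). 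Second, and more cleanly, you can bypass the intersection argument in $\mathbb S^2\times\mathbb S^1$ altogether by lifting to the universal cover $\mathbb S^2\times\mathbb R$: a single lift $\tilde S$ is an embedded sphere there, Schoenflies in a capped slab gives the dichotomy ``$\tilde S$ bounds a compact ball'' versus ``$\tilde S$ separates the two ends'', and in the first case the ball descends because distinct deck-translates of $\tilde S$ are disjoint and cannot be nested. Either route completes your plan.
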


\begin{proposition}[\cite{Neu}, Exercise 6]\label{ex_6} Any two-sided compressible 2-torus $T$
in an irreducible 3-manifold $X$
either restricts the solid torus, or it is contained in a 3-ball  there. 
\end{proposition}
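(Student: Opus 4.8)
The plan is to compress $T$ along a compressing disk, to recognise the result of the compression as a $2$-sphere, to use irreducibility of $X$ to cap that sphere by a $3$-ball $B$, and finally to undo the compression relative to $B$; according to which side of $B$ the compressing disk lies on, the reconstruction will exhibit $T$ either as the boundary of a solid torus or as a subset of $B$.

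First I would extract the compressing data. Since $T$ is a torus, alternative (2) in the definition of compressibility cannot occur, so compressibility supplies an embedded disk $D\subset X$ with $\partial D=c\subset T$ a non-contractible simple closed curve and $\mathrm{int}\,D\cap T=\emptyset$; as $\mathrm{int}\,D$ is connected and disjoint from $T$, it lies in a single component of $X\setminus T$, i.e.\ on one side of $T$. Every non-contractible simple closed curve on a torus is non-separating, so surgering $T$ along $D$ gives a connected surface: deleting from $T$ an annular neighbourhood $A$ of $c$ leaves an annulus, and capping its two boundary circles by two parallel copies $D_-,D_+$ of $D$ produces a $2$-sphere $S$. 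Performing this surgery inside a bicollar of $T$ --- available because $T$ is two-sided --- equips $S$ with a product neighbourhood, so $S$ is cylindrically embedded and irreducibility of $X$ yields a $3$-ball $B$ with $\partial B=S$.

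Next I would reverse the surgery. Let $P=N(D)$ be a regular neighbourhood of $D$, a $3$-ball attached to $S=\partial B$ along the two disks $D_-,D_+$; it lies on exactly one side of $S$. If $P$ lies outside $B$, then $V=B\cup P$ is a $3$-ball with a single $1$-handle attached, hence an orientable genus-one handlebody, i.e.\ a solid torus, and tracking the boundary shows $\partial V$ is isotopic to $T$; thus $T$ bounds a solid torus. If instead $P$ lies inside $B$, then the two pieces that reassemble $T$ --- the annulus $S\setminus(D_-\cup D_+)\subset\partial B$ and the replaced annulus $A\subset P$ --- are both contained in $B$, whence $T\subset B$ and $T$ lies in a $3$-ball. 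These are exactly the two asserted alternatives.

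The main obstacle I anticipate is the interface with the regularity of the embedding of $T$: to invoke the paper's notion of irreducibility I must guarantee that the compressed sphere $S$ is genuinely cylindrically embedded, and this is precisely where two-sidedness enters, since the bicollar of $T$ lets me thicken both $T$ and the compression to product neighbourhoods and keep $S$ bicollared. A second point requiring care is the topological bookkeeping in the reconstruction: I must check that attaching one (orientable) $1$-handle to a $3$-ball always produces a solid torus, independently of how the attaching disks $D_-,D_+$ sit on $\partial B$, so that no knotting of $P$ can obstruct the first alternative, and that in either case the reconstructed boundary is genuinely $T$ up to isotopy.
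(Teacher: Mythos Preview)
The paper does not prove this proposition; it is quoted as a background fact from the literature (an exercise in \cite{Neu}), so there is no ``paper's own proof'' to compare against. Your argument is the standard one and is correct.

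One small sharpening: in the first alternative you write that $\partial V$ is \emph{isotopic} to $T$, but in fact $\partial V = T$ on the nose. With $N(D)\cong D\times[-1,1]$ meeting $T$ in the annulus $A=\partial D\times[-1,1]$, the surgered sphere is $S=(T\setminus A)\cup (D\times\{-1,1\})$, and when $N(D)$ lies outside $B$ the boundary of $V=B\cup N(D)$ is
\[
\bigl(\partial B\setminus(D\times\{-1,1\})\bigr)\cup\bigl(\partial D\times[-1,1]\bigr)=(T\setminus A)\cup A=T.
\]
Likewise in the second alternative $T=(T\setminus A)\cup A\subset\partial B\cup N(D)\subset B$, so no isotopy is needed there either. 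Your worry about ``knotting'' of the $1$-handle is unnecessary: a $3$-ball with one orientable $1$-handle is intrinsically a solid torus regardless of its ambient embedding, which is all that is being asserted.
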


\begin{proposition}[\cite{HuWa}, Chapter 4, sec. 5, corollary 1]\label{sl_1} Any $n$-dimensional manifold cannot be separated by a subset of dimension $\leqslant n -2$. 
\end{proposition}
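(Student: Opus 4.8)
\section{Proof proposal for Theorem \ref{TT1}}

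The plan is to recover the genus-one Heegaard decomposition of $M^3$ directly from the dynamics, even in the presence of wildly embedded separatrices, and to read off the lens-space parameter $p$ from the heteroclinic index $I_f$. First I would analyze the local structure of the four fixed points. The sink $\omega_f$ and the source $\alpha_f$ have basins $W^u_{\omega_f}$ and $W^s_{\alpha_f}$ that are open $3$-balls. The real work concerns the two saddles $\sigma_f^1$ (Morse index $1$) and $\sigma_f^2$ (Morse index $2$), whose one-dimensional separatrices may be wildly embedded. My strategy is to excise small invariant neighborhoods of $\omega_f$ and $\alpha_f$ and to study the compact region between them, which the saddle invariant manifolds organize into a standard model.

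Next I would build the two handlebodies of the splitting. Consider the set $A_f = W^u_{\sigma_f^1} \cup W^u_{\omega_f}$ together with the closure of the one-dimensional unstable separatrix of $\sigma_f^1$; its closure should be an attractor whose neighborhood is a solid torus $P_1$, and dually $W^s_{\sigma_f^2}\cup W^s_{\alpha_f}$ should give a solid torus $P_2$ with $M^3 = P_1 \cup P_2$ and $P_1\cap P_2 = \partial P_1 = \partial P_2$ a torus $T$. To make this rigorous despite wildness, I would use Proposition \ref{sphere-good} to produce tame $2$-spheres near the wildness points that cut off the non-locally-flat behavior into small balls, and then Propositions \ref{ex_6} and \ref{f_1} to conclude that the surfaces separating the basins are genuine boundaries of solid tori. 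Here Proposition \ref{sl_1} enters crucially: to argue that the frame surfaces $T$ are connected and genuinely separate $M^3$ into two pieces, I must know that the one-dimensional separatrices and their closures, being of dimension at most $1 = n-2$, cannot themselves disconnect $M^3$; this guarantees the complement of the saddle separatrices stays connected and that $T$ is a two-sided separating torus rather than an artifact.

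With the genus-one splitting $M^3 = P_1\cup_T P_2$ in hand, $M^3$ is a lens space, so it remains to identify which one. The gluing is determined by the image on $\pi_1(T)\cong\mathbb Z^2$ of the meridian of $P_2$, i.e.\ by how the attaching curve wraps, and this winding is exactly what the heteroclinic set $H_f = W^s_{\sigma_f^1}\cap W^u_{\sigma_f^2}$ records. I would show that each non-compact heteroclinic curve contributes a signed intersection with the meridian disk, and that the algebraic count of these intersections, which by definition of the heteroclinic index equals $I_f = p$, is precisely the degree with which the meridian of one solid torus winds around the core of the other. Matching this degree against the standard presentation of $L_{p,q}$ as two solid tori glued so that a meridian maps to a $(p,q)$-curve then gives $M^3\cong L_{p,q}$ for the appropriate $q$.

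The main obstacle I anticipate is the second step: controlling the possibly wild embeddings well enough to guarantee that $P_1$ and $P_2$ are honestly solid tori and that $T$ is a tame separating torus. The wildness points force one to replace smooth transversality arguments by the sphere-surgery of Proposition \ref{sphere-good} and the incompressibility/solid-torus dichotomies of Propositions \ref{f_1}--\ref{ex_6}, and one must verify that these local replacements can be made compatibly along each heteroclinic curve without changing the isotopy class of the attaching map. Establishing that the heteroclinic index is invariant under these modifications, and that it survives as the genuine intersection number computing $p$, is where I expect the technical heart of the argument to lie.
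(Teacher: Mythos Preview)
Your proposal addresses the wrong statement. The statement in question is Proposition~\ref{sl_1}, a cited result from dimension theory (Hurewicz--Wallman) that the paper does not prove at all; it is simply invoked as a known fact. Your text is instead a proof outline for Theorem~\ref{TT1}, the main theorem of the paper. So as a proof of Proposition~\ref{sl_1} it is vacuous: nothing you wrote establishes that a codimension-$2$ subset cannot separate an $n$-manifold.

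If one reads your proposal as an attempt at Theorem~\ref{TT1}, it diverges from the paper's argument in a substantive way and has a real gap. The paper does not try to build the Heegaard solid tori directly for an arbitrary $f\in G_p$. Instead it first reduces (Theorem~\ref{te1}, via Lemmas~\ref{te11} and~\ref{te12}) to the subclass $G_p^+$ where the heteroclinic set $H_f$ is \emph{orientable}: all inessential curves are removed and all remaining curves carry the same sign. Only after this reduction does the paper construct the ball $B$ via Proposition~\ref{sphere-good} and the solid torus $Q_1=B\cup N_I$; orientability is what guarantees that $W^u_{\sigma_f^2}\cap\partial Q_1$ is a single closed curve $\mu_2$ whose homotopy type reads off $p$. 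Your outline skips this reduction and asserts that the signed count of heteroclinic intersections will directly give the gluing parameter; without orientability, $W^u_{\sigma_f^2}$ can meet $\partial Q_1$ in several curves of mixed sign, and you have no mechanism to cancel them or to show the resulting attaching curve is well defined. The paper's use of Proposition~\ref{sl_1} is also more restricted than you suggest: it appears only in Lemma~\ref{dyn} to show $\hat V_{\omega_f}\setminus\hat A_f$ is connected, not to produce the separating torus $T$ in $M^3$.
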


\subsection{Morse-Smale diffeomorphisms}
Let $M^n$ be a smooth closed $n$-manifold with the metric $d$ and $f:M^n\to M^n$ be an orientation-preserving diffeomorphism.

A compact $f$-invariant set $A\subset M^n$ is called an {\it attractor} of a diffeomorphism $f$ if it has a compact neighborhood $U_{A}$ such that $f(U_{A})\subset int(U_{A})$ and $A = \underset{k\geqslant 0}{\bigcap} f^{k}(U_{A})$. The neighborhood $U_{A}$ is called {\it trapping}.
{\it Repeller} is defined as an attractor for $f^{-1}$.

A point $x\in M^n$ is called a {\it wandering point} of the diffeomorphism $f$ if it has a neighborhood $U_x\subset M^n$ such that $f^k(U_x)\cap U_x=\emptyset$ for any $k\neq 0$. The complement to the set of wandering points is called a {\it non-wandering set} of the diffeomorphism $f$.

If the non-wandering set of $f$ is finite that it consists of periodic points. An isolated periodic point $p$ of the period $m_p$ of the diffeomorphism $f$ is called \textit{hyperbolic} if absolute values of all the eigenvalues of the Jacobi matrix $\left(\frac{\partial f^{m_p}}{\partial x}\right)\vert_{p}$ are not unit. If all eigenvalues by  modulo are less than (greater than) one, then $p$ is called a {\it sink} {\it (source) point}. The sink and source points are called {\it nodes}. If a hyperbolic periodic point is not {\it nodal}, then it is called {\it saddle point}.

If $f$ has a finite number of periodic points, all of them are hyperbolic, then the hyperbolic structure of the periodic point $p$ implies that its {\it stable} $$W^s_p=\{x\in M^n:\lim\limits_{k\to+\infty}d(f^{km_p}(x),p)=0\}$$ and  {\it unstable} $$W^u_p=\{x\in M^n:\lim\limits_{k\to+\infty}d(f^{-km_p}(x),p)=0\}$$ manifolds are smooth submanifolds,  diffeomorphic to $\mathbb R^{q_p}$ and $\mathbb R^{n-q_p}$, respectively, where $q_p$ is the number of eigenvalues of the Jacobi matrix with the absolute value greater than 1 ({\it Morse index of the point $p$}). The stable and the unstable manifolds are called {\it invariant manifolds}. 

A number $\nu_p=+1\,(-1)$ is called an {\it orientation type} of the point $p$ if the map $f^{m_{p}}|W^{u}_{p}$ preserves (changes) the orientation.

A connected component $\ell^u_p$ ($\ell^s_p$) of the set $W^u_p\setminus p$ ($W^s_p\setminus p$) is called an {\it unstable (stable) separatrix} of the point $p$.

Diffeomorphism $f: M^{n}\rightarrow M^{n}$ defined on a smooth closed connected orientable $n$-dimensional manifold ($n\geq{1}$) $M^{n}$ is called {\it a Morse-Smale  diffeomorphism} if
\begin{enumerate}
 \item its nonwandering set $\Omega_{f}$ consists of a finite number of hyperbolic orbits;
 \item intersection of the invariant manifolds $W^{s}_{p}$, $W^{u}_{q}$ is transversal for any nonwandering
points $p,q$. 
 \end{enumerate}

Denote by $MS(M^{n})$ the set of orientation-preserving Morse-Smale diffeomorphisms defined on an orientable $n$-manifold $M^n$.

\begin{proposition}[\cite{GrMePo2016}, Theorem 2.1.1.]\label{th_2.1.1.}
Let $f\in MS(M^{n})$. Then
\begin{enumerate}
\item $M^{n} = \underset{p\in\Omega_{f}}{\bigcup}W^{u}_{p}$;
\item $W^{u}_{p}$ is a smooth submanifold of the manifold $M^{n},$ diffeomorphic to $\mathbb{R}^{q_{p}}$ for any periodic point $p\in\Omega_{f}$;
\item $cl(\ell^{u}_{p})\setminus(\ell^{u}_{p}\cup p) = \underset{r\in\Omega_{f}:\ell^{u}_{p}\cap W^{s}_{r}\neq\emptyset}{\bigcup}W^{u}_{r}$ for any unstable separatrix $\ell^{u}_{p}(\ell^{s}_{p})$ of periodic point $p\in\Omega_{f}$.
\end{enumerate}
\end{proposition}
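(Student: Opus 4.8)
The plan is to present $M^3$ as a union of two solid tori glued along a common boundary torus — a genus-one Heegaard splitting — whose gluing coefficient equals the heteroclinic index $p=I_f$, and then to read off $L_{p,q}$ from the first homology. The skeleton of $f$ supplies the two solid tori as trapping neighborhoods of a dual attractor--repeller pair, and the heteroclinic curves $H_f$ record exactly how the meridian of one winds around the other. The possibly wild embedding of the one-dimensional separatrices is precisely what must be fought, and Propositions \ref{sphere-good} and \ref{f_1}--\ref{ex_6} are the tools for that fight.

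\textbf{Step 1: the dual attractor--repeller pair.} First I would set $\mathcal A=\mathrm{cl}(W^u_{\sigma^1_f})$ and $\mathcal R=\mathrm{cl}(W^s_{\sigma^2_f})$. By item 3 of Proposition \ref{th_2.1.1.} and the fact that $\omega_f$ (resp. $\alpha_f$) is the unique sink (resp. source), $\mathcal A=W^u_{\sigma^1_f}\cup\{\omega_f\}$ is a topological circle through $\sigma^1_f$ and $\omega_f$, and dually for $\mathcal R$. Since $M^3=\bigsqcup_{p}W^s_{p}$, the basin of $\mathcal A$ is exactly $W^s_{\omega_f}\cup W^s_{\sigma^1_f}$, and its complement has closure $\mathcal R$; hence $(\mathcal A,\mathcal R)$ is a dual attractor--repeller pair, and every orbit of the open $f$-invariant wandering set $V=M^3\setminus(\mathcal A\cup\mathcal R)$ runs from $\mathcal R$ to $\mathcal A$. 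Note that the two-dimensional separatrices $W^s_{\sigma^1_f}\setminus\{\sigma^1_f\}$ and $W^u_{\sigma^2_f}\setminus\{\sigma^2_f\}$, together with their intersection $H_f$, all lie inside $V$; these are the objects that will carry the index $p$.

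\textbf{Step 2: two solid tori filling $M^3$.} Next I would fix a smooth trapping neighborhood $N_{\mathcal A}$ of $\mathcal A$ with $f(N_{\mathcal A})\subset\mathrm{int}\,N_{\mathcal A}$. As $N_{\mathcal A}$ deformation retracts onto the circle $\mathcal A$, its boundary is a closed orientable surface with $\chi(\partial N_{\mathcal A})=2\chi(N_{\mathcal A})=0$, hence a torus $T$; establishing that $N_{\mathcal A}$ itself is a solid torus (irreducible, with $\pi_1=\mathbb Z$ carried by the core) is the first delicate point, since a wild core can obstruct the naive retraction. I would then show that $M^3\setminus\mathrm{int}\,N_{\mathcal A}$ is again a solid torus: it is a trapping region of $\mathcal R$ for $f^{-1}$, so it contains a solid-torus trapping neighborhood $N_{\mathcal R}$, and the shell between $\partial N_{\mathcal A}$ and $\partial N_{\mathcal R}$ is a fundamental domain of the gradient-like dynamics on $V$, hence a product $T^2\times[0,1]$. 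Where a one-dimensional separatrix meets this shell wildly, the product structure breaks at the wildness points; there I would apply Proposition \ref{sphere-good} to surround each wildness point by a smooth $3$-ball cutting the relevant separatrix sphere in a single curve, excise and reglue, and so recover an honest torus collar. This yields $M^3=N_{\mathcal A}\cup_{T}N_{\mathcal R}$, a genus-one Heegaard splitting, so $M^3$ is one of $\mathbb S^3$, $\mathbb S^2\times\mathbb S^1$, or $L_{p,q}$.

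\textbf{Step 3: recognition and identification of $p$.} To certify the pieces rigorously I would argue that $M^3$ is simple: every cylindrically embedded $2$-sphere can be displaced by the dynamics into the trapping ball of the sink $\omega_f$ and therefore bounds a $3$-ball, which gives irreducibility unless $M^3\cong\mathbb S^2\times\mathbb S^1$. In the exceptional case, which corresponds to net heteroclinic count $\sum_\gamma I_\gamma=0$, Propositions \ref{ss2s1} and \ref{f_1} force $M^3\cong\mathbb S^2\times\mathbb S^1=L_{0,q}$. Otherwise $M^3$ is irreducible; since $T=\partial N_{\mathcal A}$ is compressible into the solid torus $N_{\mathcal A}$, Proposition \ref{ex_6} shows the complementary side is a solid torus as well (the ``contained in a $3$-ball'' alternative being excluded because $T$ separates the essential circles $\mathcal A$ and $\mathcal R$, using Proposition \ref{sl_1} for the needed connectivity of $M^3$ minus the one-dimensional separatrices). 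Finally I would compute the gluing: a meridian disk of $N_{\mathcal R}$ may be taken inside $W^u_{\sigma^2_f}$, and its algebraic intersection with $W^s_{\sigma^1_f}$ along $T$ is recorded precisely by the signed heteroclinic curves, summing to $\sum_\gamma I_\gamma$. By Mayer--Vietoris $H_1(M^3)\cong\mathbb Z/\bigl|\sum_\gamma I_\gamma\bigr|=\mathbb Z/p$, which pins the splitting down to $L_{p,q}$.

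\textbf{Main obstacle.} The serious difficulty is entirely in Step 2: proving that the trapping neighborhoods are solid tori and that they cobound a product collar in spite of the possibly wildly embedded one-dimensional separatrices — the very phenomenon that rules out an energy function. One cannot appeal to tubular neighborhoods or to a smooth product flow, so the argument must be topological, excising the wildness points through Proposition \ref{sphere-good} and recognizing solid tori from their boundary tori via Propositions \ref{f_1}--\ref{ex_6}. The subtle bookkeeping is to verify that this excision--regluing leaves the gluing coefficient, and hence the identity $|H_1(M^3)|=I_f=p$, unchanged, and to unify the reducible case $I_f=0$ with the irreducible case $I_f>0$ into the single conclusion $M^3\cong L_{p,q}$.
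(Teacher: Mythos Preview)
Your proposal does not address the stated Proposition~\ref{th_2.1.1.} at all. That proposition is a classical structural result about Morse--Smale diffeomorphisms (decomposition of $M^n$ into unstable manifolds, their smoothness, and the closure formula for separatrices), cited verbatim from \cite{GrMePo2016}; the present paper does not prove it and simply invokes it as background. What you have written is instead an outline of a proof of Theorem~\ref{TT1}, the main result that $M^3\cong L_{p,q}$ when $f\in G_p$. So there is a basic mismatch between the statement and the argument.

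Viewed as an attempt at Theorem~\ref{TT1}, your outline is in the right spirit but misses the paper's decisive first move. Before any Heegaard construction, the paper spends Section~\ref{sec-tri} (Theorem~\ref{te1}, via Lemmas~\ref{te11} and~\ref{te12}) connecting $f$ by an arc in $\mathrm{Diff}(M^3)$ to a diffeomorphism $f_+\in G_p^+$ whose heteroclinic set is \emph{orientable} --- first killing the inessential curves, then cancelling pairs of opposite sign. Only after this reduction does the paper build the solid tori, and it does so not directly in $M^3$ but by passing to the quotient $\hat V_{\omega_f}\cong\mathbb S^2\times\mathbb S^1$, where Propositions~\ref{f_1} and~\ref{ss2s1} apply, and then lifting back and invoking Proposition~\ref{sphere-good} to produce the smooth ball $B$. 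Your Step~2 tries to argue directly that a trapping neighborhood of the wild circle $\mathcal A$ is a solid torus and that the shell is a product $T^2\times[0,1]$; without the orientability reduction and the passage to the quotient, neither the solid-torus recognition nor your Step~3 intersection count (which tacitly assumes all heteroclinic curves contribute with a consistent sign) goes through. The ``excise wildness with Proposition~\ref{sphere-good}'' idea is correct and is exactly what the paper does, but only \emph{after} the situation has been normalized to $G_p^+$.
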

If $\sigma_{1}$, $\sigma_{2}$ different saddle periodic points of the diffeomorphism $f\in MS(M^{n})$, for which $W^{s}_{\sigma_{1}}\cap W^{u}_{\sigma_{2}}\neq\emptyset$, then the intersection $W^{s}_{\sigma_{1}}\cap W^{u}_{\sigma_{2}}$ is called the {\it heteroclinic intersection}. The path connected components of a heteroclinic intersection are called {\it heteroclinic points} if their dimension is 0, {\it heteroclinic curves} if their dimension is 1, and {\it heteroclinic manifolds} if their dimension is greater than 1.

The diffeomorphism $f \in MS(M^n)$ is called {\it gradient-like} if from the condition $W^{u}_{\sigma_1} \cap W^{u}_{\sigma_2} \neq \emptyset$ for different points $\sigma_1, \sigma_2 \in \Omega_f$ it follows that $\dim W^{u}_{\sigma_1}< \dim W^{u}_{\sigma_2}$. This is equivalent to the absence of heteroclinic points for the diffeomorphism $f$.

\begin{proposition}[\cite{GrMePo2016}, Proposition 2.1.3.]\label{odi} If a separatrix $\ell^u_\sigma$ of a saddle point $\sigma$ of a diffeomorphism $f\in MS(M^{n})$ does not participate in the heteroclinic intersection, then there is a unique sink point $\omega$ such that $$cl(\ell^u_\sigma)=\sigma\cup\ell^u_\sigma\cup\omega.$$ At the same time, $cl(\ell^u_\sigma)$ is homeomorphic to the segment if $q_p=1$ and homeomorphic to the sphere $\mathbb S^{q_p}$ if $q_p>1$.
\end{proposition}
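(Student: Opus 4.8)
The plan is to locate the frontier $cl(\ell^u_\sigma)\setminus(\ell^u_\sigma\cup\sigma)$ by means of item~3 of Proposition~\ref{th_2.1.1.}, to pin down a single limiting sink by a connectedness argument, and then to recognize $cl(\ell^u_\sigma)$ as a one-point compactification. Throughout, write $q_\sigma=\dim W^u_\sigma$ for the Morse index of $\sigma$.

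\smallskip

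First I would apply item~3 of Proposition~\ref{th_2.1.1.} to the separatrix $\ell^u_\sigma$, which gives
\[
cl(\ell^u_\sigma)\setminus(\ell^u_\sigma\cup\sigma)=\bigcup_{r\in\Omega_f:\,\ell^u_\sigma\cap W^s_r\neq\emptyset}W^u_r .
\]
It then remains to determine which periodic points $r$ occur in this union. A saddle $r\neq\sigma$ cannot occur, since $\ell^u_\sigma\cap W^s_r\neq\emptyset$ would be a heteroclinic intersection, contrary to the hypothesis that $\ell^u_\sigma$ does not participate in one. The point $r=\sigma$ is excluded because $W^s_\sigma\cap W^u_\sigma=\{\sigma\}$ while $\sigma\notin\ell^u_\sigma$. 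A source $r$ is excluded because $W^s_r=\{r\}\subset\Omega_f$, whereas $\ell^u_\sigma\cap\Omega_f=\emptyset$ (every $x\in\ell^u_\sigma$ has $f^{-km_\sigma}(x)\to\sigma$ with $x\neq\sigma$, hence is wandering). Thus every contributing $r$ is a sink, and for a sink $W^u_r=\{r\}$, so the frontier is a finite set of sink points.

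\smallskip

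Next I would show this set is a single point. Applying item~1 of Proposition~\ref{th_2.1.1.} to $f^{-1}$ gives $M^n=\bigcup_{p\in\Omega_f}W^s_p$, so by the classification above $\ell^u_\sigma$ is covered by the stable manifolds $W^s_r$ of sinks. Each such $W^s_r$ is open, and distinct ones are disjoint; hence $\{\,\ell^u_\sigma\cap W^s_r\,\}$ is a partition of the connected set $\ell^u_\sigma$ into relatively open pieces, exactly one of which is non-empty. Therefore there is a unique sink $\omega$ with $\ell^u_\sigma\subset W^s_\omega$, the frontier equals $\{\omega\}$, and $cl(\ell^u_\sigma)=\sigma\cup\ell^u_\sigma\cup\omega$. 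I would emphasize that this is precisely the step that rules out the frontier being an entire periodic sink orbit: although the forward $f$-orbit of a single point of $\ell^u_\sigma$ accumulates on every point of its limit sink orbit, the \emph{set} $\ell^u_\sigma$ lands in one open basin petal.

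\smallskip

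Finally I would read off the topology from the Alexandroff one-point compactification. The space $Y:=cl(\ell^u_\sigma)$ is compact, being closed in the closed manifold $M^n$, and Hausdorff; the subset $X:=\sigma\cup\ell^u_\sigma=Y\setminus\{\omega\}$ is open, dense and locally compact, so $Y$ is homeomorphic to the one-point compactification $X^+$. By item~2 of Proposition~\ref{th_2.1.1.}, $W^u_\sigma\cong\mathbb{R}^{q_\sigma}$. If $q_\sigma>1$, then $W^u_\sigma\setminus\sigma$ is connected, so $\ell^u_\sigma=W^u_\sigma\setminus\sigma$ and $X=W^u_\sigma\cong\mathbb{R}^{q_\sigma}$, whence $Y\cong(\mathbb{R}^{q_\sigma})^+\cong\mathbb{S}^{q_\sigma}$. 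If $q_\sigma=1$, then $\ell^u_\sigma$ is one of the two components of $W^u_\sigma\setminus\sigma$, so $X\cong[0,\infty)$ and $Y\cong[0,\infty)^+\cong[0,1]$, a segment.

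\smallskip

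The main obstacle is the uniqueness step: a naive inspection of forward orbits suggests the closure could pick up the whole limiting sink orbit, and it is the partition into disjoint open basins together with the connectedness of $\ell^u_\sigma$ that collapses this to a single point. A convenient feature of the compactification argument is that it fixes the homeomorphism type of $cl(\ell^u_\sigma)$ abstractly, so any wildness of the embedding near $\omega$ --- precisely the phenomenon central to this paper --- is irrelevant to the identification with $\mathbb{S}^{q_\sigma}$ or with a segment.
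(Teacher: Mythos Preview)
The paper does not prove this proposition: it is quoted from \cite{GrMePo2016} as a background fact (note the citation in the proposition header) and is used without argument. So there is no in-paper proof to compare your attempt against.

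That said, your argument is sound and is essentially the standard one. Item~3 of Proposition~\ref{th_2.1.1.} correctly identifies the frontier, your case analysis eliminating saddles, sources, and $\sigma$ itself is clean, and the connectedness-plus-open-basins step pinning down a single sink $\omega$ is the right mechanism. The one-point compactification argument for the homeomorphism type is also correct; the only point worth making explicit is that $X=\sigma\cup\ell^u_\sigma$ inherits its topology from the smooth embedding $W^u_\sigma\hookrightarrow M^n$, so the identification $X\cong\mathbb{R}^{q_\sigma}$ (respectively $X\cong[0,\infty)$) is as subspaces, not merely as abstract sets.
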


Let's put $\hat{W}^{u}_{p}=(W^{u}_{{p}}\setminus {p})/f^{m_p}$ and denote by $p_{\hat{{W}}^{u}_{{p}}}:W^{u}_{{p}}\setminus{p}\rightarrow\hat{{W}}^{u}_{{p}} $ natural prection.
\begin{proposition}[\cite{GrMePo2016}, Theorem 2.1.3]\label{T_2.1.3.}
The projection $p_{\hat{W}^{u}_{{p}}}$ is a covering that induces the structure of a smooth $q_{p}$-manifolds on the space of orbits $\hat{W}^{u}_{{p}}$. At the same time:
\begin{itemize}
\item for $q_p=1, \nu_p=-1$ the space $\hat{W}^{u}_{{p}}$ is homeomorphic to a circle;
\item for $q_p=1, \nu_p=+1$ the space $\hat{W}^{u}_{{p}}$ is homeomorphic to a pair of circles;
\item for $q_p=2$, $\nu_p=-1$ the space $\hat{W}^{u}_{{p}}$ is homeomorphic to the Klein bottle;
\item for $q_p=2$, $\nu_p=+1$ the space $\hat{W}^{u}_{{p}}$ is homeomorphic to a two-dimensional torus;
\item for $q_p\geqslant 3, \nu_p=-1$ space $\hat{{W}}^{u}_{p}$ is homeomorphic to the generalized Klein bottle $\mathbb{S}^{q_p-1}\tilde{\times}\mathbb{S}^{1}$;
\item for $q_p\geqslant 3, \nu_p=+1$ the space $\hat{W}^{u}_{{p}}$ is homeomorphic to $\mathbb{S}^{q_p-1}\times\mathbb{S}^{1}$.
\end{itemize}
\end{proposition}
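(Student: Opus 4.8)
The plan is to realize $\hat{W}^{u}_{p}$ as the orbit space of a free, properly discontinuous $\mathbb{Z}$-action and then to identify this orbit space with the orbit space of the linearization of $f^{m_p}$ at $p$. By Proposition~\ref{th_2.1.1.}, $W^{u}_{p}$ is a smooth submanifold diffeomorphic to $\mathbb{R}^{q_{p}}$, with $p$ corresponding to the origin, and $f^{m_p}$ restricts to a diffeomorphism of $W^{u}_{p}$ fixing $p$ whose differential $A=Df^{m_p}|_{p}$ is expanding (all eigenvalues of modulus greater than $1$). First I would show that the cyclic group $\langle f^{m_p}\rangle\cong\mathbb{Z}$ acts freely on $W^{u}_{p}\setminus p$: the only fixed point of $f^{m_p}$ in $W^{u}_{p}$ is $p$, since every point of $W^{u}_{p}\setminus p$ escapes to infinity under forward iteration and converges to $p$ under backward iteration. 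To get proper discontinuity I would exhibit a fundamental domain: because $A$ is expanding, a sufficiently small smoothly embedded sphere $S\subset W^{u}_{p}$ around $p$ lies in the interior of the ball bounded by $f^{m_p}(S)$, so the closed annular region $Q\cong\mathbb{S}^{q_{p}-1}\times[0,1]$ between $S$ and $f^{m_p}(S)$ is a fundamental domain and $W^{u}_{p}\setminus p=\bigcup_{k\in\mathbb{Z}}f^{km_p}(Q)$, consecutive copies meeting only along a common boundary sphere. Freeness and proper discontinuity of a smooth $\mathbb{Z}$-action then make the projection $p_{\hat{W}^{u}_{p}}$ a regular infinite cyclic covering and equip $\hat{W}^{u}_{p}$ with a smooth $q_{p}$-manifold structure for which the projection is a local diffeomorphism, which is the first assertion.

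The quotient is obtained from the fundamental domain $Q$ by gluing its two boundary spheres via $f^{m_p}$, so $\hat{W}^{u}_{p}$ is the mapping torus of the self-map $g$ of $\mathbb{S}^{q_{p}-1}$ induced by $f^{m_p}$ on the ``link'' sphere $S$. The homeomorphism type of such a mapping torus depends only on the topological isotopy class of $g$, so the second step is to pin down that class. Here I would invoke the classical Grobman--Hartman theorem: near the hyperbolic fixed point $p$, the map $f^{m_p}$ is topologically conjugate to the linear expansion $A$, and for $S$ chosen small enough the whole fundamental domain $Q$ together with its gluing lies in the conjugacy neighborhood. Hence $\hat{W}^{u}_{p}$ is homeomorphic to the orbit space of $A$ acting on $\mathbb{R}^{q_{p}}\setminus\{0\}$, and $g$ is topologically conjugate to the induced link map $\theta\mapsto A\theta/|A\theta|$ of the linear model, whose degree equals $\mathrm{sign}(\det A)=\nu_{p}$, matching the orientation type.

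It remains to compute the orbit space of the linear expansion, which I would do by deforming $A$ inside $GL(q_{p},\mathbb{R})$. Writing $\mathbb{R}^{q_{p}}\setminus\{0\}\cong\mathbb{S}^{q_{p}-1}\times\mathbb{R}_{>0}$ in polar coordinates, the orbit space is the mapping torus of the link map; since $GL(q_{p},\mathbb{R})$ has exactly two path components, this link map is topologically isotopic to the identity when $\det A>0$ (path to a homothety $\lambda I$) and to a single reflection when $\det A<0$ (path to $\lambda R$, with $R$ a reflection). Consequently $\hat{W}^{u}_{p}\cong\mathbb{S}^{q_{p}-1}\times\mathbb{S}^{1}$ when $\nu_{p}=+1$ and $\hat{W}^{u}_{p}\cong\mathbb{S}^{q_{p}-1}\tilde{\times}\mathbb{S}^{1}$ when $\nu_{p}=-1$, which specializes to the listed low-dimensional cases: for $q_{p}=1$ a pair of circles versus a single circle, for $q_{p}=2$ a torus versus a Klein bottle. \textbf{The main obstacle} I anticipate is precisely the high-dimensional cases $q_{p}\geqslant 3$: an orientation-preserving \emph{diffeomorphism} of $\mathbb{S}^{q_{p}-1}$ need not be smoothly isotopic to the identity, since exotic diffeomorphisms exist, so one cannot directly read off a smooth product structure. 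The resolution, and the crux of the argument, is that the proposition asserts only a \emph{homeomorphism}, and topologically the link map is conjugate to a \emph{linear} one; because the relevant deformations take place in $GL(q_{p},\mathbb{R})$, which has only two components, the topological isotopy class, and hence the homeomorphism type of the mapping torus, is governed solely by the sign $\nu_{p}$.
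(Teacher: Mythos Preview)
Your argument is correct and is essentially the standard one. Note, however, that the paper does \emph{not} supply its own proof of this proposition: it is quoted verbatim as Theorem~2.1.3 from the cited monograph \cite{GrMePo2016} and used as a black box. So there is nothing in the present paper to compare your approach against; what you have written is precisely the kind of proof one finds in that reference---free properly discontinuous $\mathbb{Z}$-action via an annular fundamental domain, reduction to the linear model by Grobman--Hartman, and identification of the mapping torus through the two components of $GL(q_p,\mathbb{R})$. Your handling of the potential exotic-sphere obstruction in the cases $q_p\geqslant 3$ is exactly right: only a homeomorphism is asserted, and the link map of a linear expansion is determined up to topological isotopy by $\mathrm{sign}(\det A)=\nu_p$.
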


Let $f\in MS(M^n)$. Denote by $\Omega^0_f,\,\Omega^1_f,\,\Omega^2_f$ the set of sinks, saddles and sources of diffeomorphism $f$. We divide the set $\Omega^1_f$ into two disjoint subsets $\Sigma_A$ and $\Sigma_R$ such that the sets $$A=\Omega^0_f\cup W^u_{\Sigma_A},\,R=\Omega^2_f\cup W^s_{\Sigma_R}$$ are closed and invariant. By construction, the sets $A,\,R$ contain all periodic points of the diffeomorphism $f$. The largest dimension of the unstable (stable) manifold of periodic points from $A$($R$) is called {\it dimension of $A\,(R)$}.

\begin{proposition}[\cite{GrMePoZh}, Theorem 1]\label{AfR} 
Let $f\in MS(M^n)$. Then the set $A$ (respectively $R$) is an attractor (repeller) of the diffeomorphism $f$. Moreover, if the dimension of the attractor $A$ (repeller $R$) $\leqslant n-2$, then the repeller $R$ (attractor $A$) is connected.
\end{proposition}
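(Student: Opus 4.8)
Here is how I would attack Proposition~\ref{AfR}. Throughout, write $P_A=\Omega^0_f\cup\Sigma_A$ and $P_R=\Omega^2_f\cup\Sigma_R$ for the periodic points lying in $A$ and $R$; these are disjoint and together exhaust $\Omega_f$. The only structural input I need is Proposition~\ref{th_2.1.1.}. Its first item says the unstable manifolds $\{W^u_p\}_{p\in\Omega_f}$ cover $M^n$, and since each point has a single $\alpha$-limit orbit they are pairwise disjoint, so they partition $M^n$; dually the stable manifolds partition $M^n$. This yields at once the identities
$$M^n\setminus A=W^u_{P_R}=\bigcup_{p\in P_R}W^u_p,\qquad M^n\setminus R=W^s_{P_A}=\bigcup_{p\in P_A}W^s_p,$$
i.e.\ the complement of $A$ is exactly the basin of $R$, and symmetrically. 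Moreover, reading the hypothesis ``$A$ is closed'' through the third item of Proposition~\ref{th_2.1.1.} shows that $P_A$ is downward closed for the Smale order: if $p\in P_A$ and $\ell^u_p\cap W^s_r\neq\emptyset$, then $W^u_r\subset\mathrm{cl}(\ell^u_p)\subset A$, whence $r\in P_A$. This ``no orbit escapes $A$ in backward time'' property is the structural reason $A$ is an attractor.

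For the first assertion I would produce a trapping neighbourhood. The set $A$ is compact (closed in the compact $M^n$) and invariant by hypothesis, so it remains to build a compact $U_A$ with $f(U_A)\subset\operatorname{int}(U_A)$ and $A=\bigcap_{k\geqslant0}f^k(U_A)$. The plan is to use that a Morse--Smale diffeomorphism satisfies the no-cycle condition, so that the downward-closed pair $(P_A,P_R)$ is an admissible attractor--repeller partition of the finite chain-recurrent set $\Omega_f$; standard filtration/Conley theory then provides a continuous Lyapunov function $\varphi\colon M^n\to[0,1]$ with $\varphi^{-1}(0)=A$, $\varphi^{-1}(1)=R$, and $\varphi(f(x))<\varphi(x)$ for all $x\notin\Omega_f$. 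Setting $U_A=\varphi^{-1}([0,\varepsilon])$ for small $\varepsilon>0$, compactness together with the strict decrease of $\varphi$ along wandering orbits gives $f(U_A)\subset\operatorname{int}(U_A)$ and $\bigcap_{k\geqslant0}f^k(U_A)=A$, so $A$ is an attractor; applying the same to $f^{-1}$ shows $R$ is a repeller. I expect the genuine work — the main obstacle — to lie precisely in constructing $\varphi$ (equivalently, the filtration): one linearises near each periodic point, takes trapping balls around the sinks and trapping ``handles'' thickening the unstable manifolds of the saddles in $\Sigma_A$, and glues them by induction along the Smale order. This rests on the no-cycle property rather than on the quoted propositions, and is the only non-elementary ingredient of Part~1.

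For the second assertion, suppose $\dim A\leqslant n-2$. Being the finite union of the submanifolds $W^u_p$, $p\in P_A$, each of manifold-dimension $q_p\leqslant\dim A$, the closed set $A$ has topological dimension equal to the integer $\dim A\leqslant n-2$; by Proposition~\ref{sl_1} such a set does not separate the connected manifold $M^n$, so $M^n\setminus A$ is connected. By the identity above, $M^n\setminus A=W^u_{P_R}$ is the basin of the repeller $R$, and the basin of an attractor ($R$ for $f^{-1}$) deformation retracts onto it by the standard Conley theory of attractors, the retraction being realised by pushing inward along the dynamics and organised by the Lyapunov function. Hence $\pi_0(R)=\pi_0(W^u_{P_R})$, and the connectedness of $M^n\setminus A$ forces $R$ to be connected. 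Only the easy half of this correspondence is elementary: each $W^u_p$ is connected and contains $p\in R$, so every component of $W^u_{P_R}$ meets $R$; the reverse count, which is what actually yields connectedness of $R$, is exactly what the retraction supplies. The symmetric statement — $\dim R\leqslant n-2$ implies $A$ connected — then follows verbatim by applying the whole argument to $f^{-1}$, which interchanges $A$ and $R$.
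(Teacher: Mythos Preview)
The paper does not contain a proof of Proposition~\ref{AfR}: it is stated with the citation ``\cite{GrMePoZh}, Theorem 1'' and used as an imported black box, so there is no argument in the present paper to compare your proposal against.

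That said, your sketch is essentially the standard one and would be accepted as a proof outline. Two small comments. First, for the trapping neighbourhood you correctly identify that the substantive content is the no-cycle property and the resulting filtration; invoking a Lyapunov function is one clean way to package this, and your observation that closedness of $A$ forces $P_A$ to be downward closed in the Smale order is exactly the point. Second, in the connectedness argument the phrase ``the basin deformation retracts onto $R$'' is stronger than you need and not literally true for discrete-time attractors without further work. The clean replacement is: pass to an iterate $f^N$ fixing each connected component of $R$; each component is then an attractor for $f^{-N}$ with its own open basin, these basins are pairwise disjoint, and their union is still $M^n\setminus A$; hence if $M^n\setminus A$ is connected (which you get from Proposition~\ref{sl_1}), $R$ has a single component. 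This gives $\pi_0(R)\cong\pi_0(M^n\setminus A)$ without any homotopy statement.
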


Following \cite{GrMePoZh}, we will call $A$ and $R$ a {\it dual attractor and repeller} of the Morse-Smale diffeomorphism $f\in MS(M^n)$, and the set $V=M^n\setminus(A\cup R)$ -- {\it a characteristic set}. Denote by $$\hat V=V/f$$ the set of orbits of the action of the group $F=\{f^k,k\in\mathbb{Z}\}$ on the manifold $V$ -- {\it characteristic space}, which coincides with the set of orbits of the diffeomorphism $f$ on $V$. Let $$p_{\hat V}:V\to\hat V$$ be a natural projection that matches the point $x\in V$ with its orbit by virtue of the diffeomorphism $f$ and endows the set $\hat V$ with a factortopology.

\begin{proposition}[\cite{GrMePoZh}, Theorem 2]\label{Vf} For any dual pair of attractor-repeller $A,R$ of the Morse-Smale diffeomorphism $f\in MS(M^n)$ the following is true:
\begin{itemize}
\item the characteristic space $\hat V$ is a closed smooth orientable $n$-manifold, whose each connected component is either irreducible or homeomorphic to $\mathbb{S}^{n-1}\times\mathbb{S}^1$;
\item projection $p_{\hat V}:V\to\hat V$ is a cover;
\item a map $\eta_{\hat V}$, which assigns to each homotopy class $[\hat c]$ of loops $\hat c\subset\hat V$ closed at a point $\hat x$ an integer $n$ such that lifting the loop $\hat c$ by $V$ connects some point $x\in p^{-1}_{\hat V}(\hat x)$ with a point
$f^n(x)$, is a homomorphism on the fundamental group of each  connected component of the space $\hat V$;
\item if the dimension of the attractor $A$ and the repeller $R$ $\leqslant n-2$, then $V,\hat V$ are connected and the map $\eta_{\hat V}:\pi_1(\hat V)\to\mathbb Z$ is an epimorphism.
\end{itemize}
\end{proposition}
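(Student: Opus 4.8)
The plan is to realise $\hat V$ as the quotient of a free, properly discontinuous $\mathbb Z$-action and then to read off its global topology from the attractor--repeller geometry of Proposition~\ref{AfR}. I would begin with the second and third bullets, which are the most structural. By construction of the dual pair every periodic point of $f$ lies in $A\cup R$, so $f|_V$ has no periodic points and the action of $\langle f\rangle\cong\mathbb Z$ on $V$ is free; moreover each $x\in V$ is a wandering point, i.e.\ has a neighbourhood $U_x$ with $f^k(U_x)\cap U_x=\emptyset$ for all $k\neq 0$, which is precisely proper discontinuity. A free, properly discontinuous $\mathbb Z$-action on the smooth $n$-manifold $V$ therefore yields a smooth manifold quotient $\hat V$ together with a regular covering $p_{\hat V}\colon V\to\hat V$ whose deck group is $\langle f\rangle$; since $V$ is open in the orientable $M^n$ and $f$ is orientation preserving, the orientation descends to $\hat V$. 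The map $\eta_{\hat V}$ is then exactly the monodromy homomorphism of this regular cover: lifting a based loop $\hat c$ produces a path joining a point $x$ to its image under the unique deck transformation $f^{\,n}$ carrying the initial endpoint to the terminal one, and $[\hat c]\mapsto n$ is a homomorphism $\pi_1(\hat V)\to\mathbb Z$ on each connected component by the standard naturality of regular coverings.

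Next I would establish compactness of $\hat V$ together with its fibred structure. Using a trapping neighbourhood $U_A$ of $A$ with $\overline{U_A}\cap R=\emptyset$ coming from Proposition~\ref{AfR}, the attractor--repeller structure lets one build an adapted Lyapunov function, namely a smooth \emph{proper submersion} $\varphi\colon V\to\mathbb R$ with $\varphi\circ f=\varphi-1$; non-singularity is available because $V$ contains no nonwandering points, so the ``flow direction'' of $f$ is nowhere singular. Each level set $\varphi^{-1}(c)$ is closed in $M^n$, hence a compact $(n-1)$-manifold, and by Ehresmann's theorem $\varphi$ is a trivial fibre bundle over $\mathbb R$, so on every component $V_0\cong F_0\times\mathbb R$ with $F_0$ a closed orientable $(n-1)$-manifold. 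Passing to the quotient, $\varphi$ descends to a submersion $\hat\varphi\colon\hat V\to\mathbb R/\mathbb Z=\mathbb S^1$; its fibre is $\varphi^{-1}(c)$ (which meets each orbit exactly once), and $\hat V$ is thereby a closed orientable manifold fibring over $\mathbb S^1$ with fibre $F_0$, i.e.\ the mapping torus of the monodromy diffeomorphism $g\colon F_0\to F_0$ induced by $f$. This simultaneously gives closedness, the Hausdorff property, and the fibration, completing the first part of the first bullet.

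It remains to prove the dichotomy in the first bullet, and this is the step I expect to be the main obstacle. Given the fibration $\hat V_0\to\mathbb S^1$ with fibre a closed orientable $(n-1)$-manifold $F_0$, there are two cases. If $F_0\cong\mathbb S^{n-1}$, then orientability of $\hat V_0$ forces the monodromy to be (up to isotopy) orientation preserving, the bundle is trivial and $\hat V_0\cong\mathbb S^{n-1}\times\mathbb S^1$. If $F_0\not\cong\mathbb S^{n-1}$, I would argue irreducibility by passing to covers: since $V_0\cong F_0\times\mathbb R$, the universal cover of $\hat V_0$ is $\widetilde{F_0}\times\mathbb R$, whence $\pi_2(\hat V_0)\cong\pi_2(F_0)$. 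The delicate points are the precise classification of fibres whose mapping tori fail to be irreducible and the passage from the homotopy information to the ball-bounding property in the sense of the definition of irreducibility. In dimension $n=3$, which is the case used in the sequel, this reduces to the familiar facts that a surface bundle over $\mathbb S^1$ with fibre of positive genus is aspherical, hence has $\pi_2=0$ and is irreducible by the sphere theorem, while an $\mathbb S^2$-fibre gives $\mathbb S^2\times\mathbb S^1$; establishing the statement in full generality of $n$ is where the real topological work lies.

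Finally, for the fourth bullet I would invoke Proposition~\ref{sl_1}: when $\dim A$ and $\dim R$ are both at most $n-2$, the set $A\cup R$ cannot separate $M^n$, so $V=M^n\setminus(A\cup R)$ is connected, and therefore so is its continuous image $\hat V$. Connectedness of $V$ makes the cover $p_{\hat V}$ connected with full deck group $\mathbb Z$; concretely, joining any $x\in V$ to $f(x)$ by a path in $V$ and projecting it gives a loop $\hat c\subset\hat V$ with $\eta_{\hat V}([\hat c])=1$, so $\eta_{\hat V}\colon\pi_1(\hat V)\to\mathbb Z$ is an epimorphism.
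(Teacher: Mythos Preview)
The paper does not supply a proof of this proposition at all: it is quoted verbatim as Theorem~2 of \cite{GrMePoZh} and used as a black box throughout. There is therefore no in-paper argument to compare your proposal against.

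On the merits of your sketch: the covering-space and monodromy parts (bullets two, three, and four) are correct and standard, and your use of Proposition~\ref{sl_1} for connectedness is exactly right. For the first bullet, the existence of a smooth proper submersion $\varphi\colon V\to\mathbb R$ with $\varphi\circ f=\varphi-1$ is the heart of the matter and is not free; it is equivalent to producing a trapping neighbourhood $U_A$ with \emph{smooth} boundary, and the phrase ``the flow direction of $f$ is nowhere singular'' does not by itself supply one for a diffeomorphism. Your irreducibility dichotomy is sound for $n=3$ (aspherical surface bundle $\Rightarrow$ $\pi_2=0$ $\Rightarrow$ irreducible via the sphere theorem; $\mathbb S^2$-fibre $\Rightarrow$ $\mathbb S^2\times\mathbb S^1$), and that is the only dimension the present paper actually uses. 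For general $n$ your argument has real gaps: the passage from $\pi_2$ information to irreducibility in the paper's sense (every cylindrically embedded $(n{-}1)$-sphere bounds a ball) is not available once $n>3$, and you have not shown that the fibre $F_0=\partial U_A$ must be $\mathbb S^{n-1}$ whenever the mapping torus fails to be irreducible. The reference \cite{GrMePoZh} handles this by exploiting the specific handlebody structure of trapping neighbourhoods of Morse--Smale attractors rather than a bare mapping-torus argument.
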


A submanifold  $\hat X\subset\hat V$ is called {\it $\eta_{\hat V}$-essential}, if $\eta_{\hat V}(i_{\hat X*}(\pi_1(\hat X))\neq \{0\}$.

Let $U_A$ be a trapping neighborhood of an attractor $A$ of a Morse-Smale diffeomorphism $f:M^n\to M^n$ and $R$ be the dual to it repeller. Let $F_A=U_A\setminus f(U_A)$, then $cl(F_A)$ is the fundamental domain of the  diffeomorphism $f$ restriction to $V$. 
Suppose $\hat V_A=cl(F_A)/f$, then $\hat V_A$ is a smooth closed $n$-manifold obtained from $cl(F_A)$ by identifying boundaries due to the diffeomorphism $f$. 
Denote by $p_A: cl(F_A)\to\hat V_A$ the natural projection. 

Consider the family $E_f\in Diff(M^n)$ of diffeomorphisms such that ${\Omega}_{f'}={\Omega}_{f}$ for any diffeomorphism $f'\in E_f$ and the diffeomorphism $f'$ coincides with the diffeomorphism $f$ on $U_A$ and in some neighborhood of  $R$. 

For any diffeomorphism $f'\in E_f$, we put $\hat l^s_{f'}=p_A(W^s_{\Sigma_A}\cap F_A)$ and $\hat l^u_{f'}=p_A(W^u_{\Sigma_R}\cap  F_A)$.

\begin{proposition}[\cite{PoSh}, Lemma 1]\label{IsotBan}
Let $\hat h: \hat V_A\to \hat V_A$ be an isotopic to identity diffeomorphism. Then there exists a smooth arc ${\varphi}_{t}\subset E_f$ such that ${\varphi}_{0}=f, {\varphi}_{1}=f'$ and $\hat l^u_{f'}=\hat h (\hat l^u_f)$, $\hat l^s_{f'}=\hat l^s_f$.
\end{proposition}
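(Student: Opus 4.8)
The plan is to construct $f'$ as a \emph{ramped conjugate} of $f$: a diffeomorphism that coincides with $f$ near $A$ and near $R$, is conjugate to $f$ on the wandering part $V$ by a diffeomorphism $\Gamma$ interpolating between the identity and an equivariant lift of $\hat h$, and therefore automatically lies in $E_f$. Before building $\Gamma$ I would record that the condition $\hat l^s_{f'}=\hat l^s_f$ comes for free for \emph{every} $f'\in E_f$: since $f'=f$ on $U_A$ and $f(U_A)\subset U_A$, forward $f'$-orbits of points of $U_A$ stay in $U_A$ and coincide with their $f$-orbits, so $W^s_{\Sigma_A}\cap F_A$ — and hence $\hat l^s_{f'}$ — is independent of the choice in $E_f$. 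Thus only the unstable trace, which is governed by the behaviour of $f'$ \emph{outside} $U_A$, has to be moved.

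To act outside $U_A$ I would fix a Lyapunov function $\xi\colon V\to\mathbb R$ for the dual pair with $\xi\circ f=\xi-1$ and $\{\xi=0\}=\partial U_A$; its gradient trivializes $V$ as a product, the level $\{\xi=k\}$ is $f^{-k}(\partial U_A)$, and $U_A=\{\xi\leqslant 0\}$. Write $\hat h=\hat h_1$ for an isotopy $\hat h_t$ with $\hat h_0=\mathrm{id}$. Since $p_{\hat V}\colon V\to\hat V$ is a covering by Proposition~\ref{Vf} and $\hat V_A$ is identified with $\hat V$, this isotopy lifts to an equivariant isotopy $\tilde h_t\colon V\to V$ with $\tilde h_t\circ f=f\circ\tilde h_t$ and $\tilde h_0=\mathrm{id}$. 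Choose a ramp $\beta\colon\mathbb R\to[0,1]$ equal to $0$ on $\{\xi\leqslant 0\}$ and to $1$ on $\{\xi\geqslant N\}$, and set $\Gamma(x)=\tilde h_{\beta(\xi(x))}(x)$, so that $\Gamma=\mathrm{id}$ on $U_A$ and $\Gamma=\tilde h_1$ for $\xi\geqslant N$. Define $f':=\Gamma\circ f\circ\Gamma^{-1}$ on $V$ and $f':=f$ on a neighborhood of $A\cup R$; by equivariance $\tilde h_1 f\tilde h_1^{-1}=f$, these two prescriptions agree on the overlap $\{\xi\geqslant N+1\}\cup\{\xi\leqslant 0\}$, so $f'$ is a well-defined diffeomorphism of $M^3$.

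By construction $f'=f$ on $U_A$ and near $R$. On $V$ it is conjugate to $f$ by $\Gamma$, and since every point of $V$ is wandering for $f$, the same holds for $f'$; together with $f'=f$ near $A\cup R$ this gives $\Omega_{f'}=\Omega_f$ with the same hyperbolic local structure, while transversality of the invariant manifolds is transported by $\Gamma$, so $f'\in E_f$. The stable trace is unchanged, as noted. For the unstable trace, flowing the local unstable manifold of a saddle $\sigma\in\Sigma_R$ down from $\{\xi\geqslant N\}$ to $\{\xi=0\}$ through the conjugacy shows that $W^u_{\Sigma_R}(f')\cap F_A$ projects to $\tilde h_1^{\pm1}$ applied to $p_{\hat V}(W^u_{\Sigma_R}(f))$; with the appropriate orientation of the ramp (targeting the lift of $\hat h^{-1}$) the sign is the right one and $\hat l^u_{f'}=\hat h(\hat l^u_f)$. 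Replacing $\beta$ by $t\beta$ and $\tilde h_1$ by $\tilde h_t$ produces the smooth arc $\varphi_t=\Gamma_t f\Gamma_t^{-1}$ with $\varphi_0=f$, $\varphi_1=f'$, each $\varphi_t$ lying in $E_f$ by the same argument.

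The step I expect to require the most care is ensuring that $\Gamma$ is a genuine diffeomorphism. Because the equivariant lift $\tilde h_t$ need not preserve the level sets of $\xi$, the map $x\mapsto\tilde h_{\beta(\xi(x))}(x)$ is a rank-one shear perturbation of a diffeomorphism, and its differential is invertible only when $\beta'$ is sufficiently small. This is precisely why the ramp must be spread over a large number $N$ of fundamental domains $f^{-k}(\mathrm{cl}\,F_A)$ lying between $\partial U_A$ and $R$: taking $N$ large makes $\beta'$ as small as needed, so that $\Gamma$ and each $\Gamma_t$ are slowly varying, hence invertible and proper, self-maps of $V$ whose support stays in the prescribed wandering region. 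Once this is secured the remaining verifications are the routine conjugacy and limit computations sketched above.
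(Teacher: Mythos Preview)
The paper does not contain its own proof of this proposition: it is quoted verbatim from \cite{PoSh} as an external lemma and used as a black box. So there is no argument in the present paper to compare your proposal against.

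On its own merits your outline is the standard and essentially correct strategy: lift the isotopy $\hat h_t$ to an $f$-equivariant isotopy $\tilde h_t$ of $V$, damp it by a cutoff supported between $\partial U_A$ and a neighbourhood of $R$, and conjugate $f$ by the resulting diffeomorphism $\Gamma$. The two nontrivial points --- that $\hat l^s$ is automatically preserved because $f'=f$ on $U_A$, and that invertibility of $\Gamma$ forces the ramp to be spread over many fundamental domains --- are exactly the ones you single out. The sign issue you flag is real: with the ramp as you first write it ($\Gamma=\mathrm{id}$ on $U_A$, $\Gamma=\tilde h_1$ near $R$) one gets $W^u_{\Sigma_R}(f')\cap V=\Gamma\bigl(\tilde h_1^{-1}(W^u_{\Sigma_R}(f)\cap V)\bigr)$, hence $\hat l^u_{f'}=\hat h^{-1}(\hat l^u_f)$ on $F_A$; lifting $\hat h^{-1}$ instead, as you suggest, repairs this. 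A slightly cleaner variant that avoids both the sign bookkeeping and the invertibility estimate is to lift the time-dependent vector field generating $\hat h_t$, multiply by the cutoff $\rho$, and integrate: the resulting $\Gamma_t$ is then the flow of a compactly supported vector field, hence automatically a diffeomorphism fixing $U_A$ pointwise, and the arc $\varphi_t=\Gamma_t f\Gamma_t^{-1}$ lies in $E_f$ for all $t$ without further checks.
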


\subsection{Classification of Morse-Smale 3-diffeomorphisms}

Let $f\in MS(M^3)$. Let's put $$A_f=W^u_{\Omega_0\cup\Omega_1},\,R_f=W^s_{\Omega_2\cup\Omega_3},\,V_f=M^3\setminus(A_f\cup R_f).$$ By proposition \ref{Vf} set $A_f~(R_f)$ is a connected attractor (repeller) whose topological dimension is less than or equal to 1, the set $V_f$ is a connected 3-manifold and $$V_f=W^s_{A_f\cap\Omega_f}\setminus A_f=W^u_{R_f\cap\Omega_f}\setminus R_f.$$ Moreover, the space $\hat V_f=V_f/f$ is a connected closed orientable 3-manifold and the  natural projection $p_{_f}:V_f\to\hat V_f$ induces an epimorphism  
$\eta_{_f}:\pi_1(\hat V_f)\to\mathbb Z$, attributing to each homotopy class $[c]\in\pi_1(\hat V_f)$ of a closed curve $c\subset\hat V_f$  an integer $n$ such that a lift of $c$ connects some point $x\in V_f$ with the point
$f^n(x)$. Let's put $$\hat{L}^s_f=p_{_f}(W^s_{\Omega_1}\setminus A_f),\,\hat{L}^u_f=p_{_f}(W^u_{\Omega_2}\setminus R_f).$$ 

Set $S_{f}=(\hat V_{f},\eta_{_{f}},\hat{L}^s_{f},\hat{L}^u_{f})$ is called a {\it scheme} of the diffeomorphism $f\in MS(M^3)$.

\begin{proposition}[\cite{BoGrPo2019}, Theorem 1]\label{clms3} Diffeomorphisms $f,\,f'\in MS(M^3)$ are topologically conjugate if and only if their schemes are equivalent, that is, there is a homeomorphism $\hat\varphi:\hat V_{f}\to\hat V_{f'}$ such that

1) $\eta_{_f}=\eta_{_{f'}}\hat\varphi_*$;

2) $\hat\varphi(\hat L^s_f)=\hat L^s_{f'},\,\hat\varphi(\hat L^u_f)=\hat L^u_{f'}$.
\end{proposition}

To solve the realization problem 
it is necessary to identify the set of all abstract schemes that can be implemented by a Morse-Smale diffeomorphism. 

Let $\hat V$ be a simple smooth 3-manifold whose fundamental group admits an epimorphism $\eta:\pi_1(\hat V)\to\mathbb Z$, $\hat{\ell}\subset\hat V$ be an $\eta$-essential  smooth torus and $N_{\hat{\ell}}\subset\hat V$ is its tubular neighborhood. Let $\hat Y=\mathbb D^2\times\mathbb S^1$ and $\hat\mu$ be a meridian of the solid torus $\hat Y$ (closed curve contractible on $\hat Y$ and essential on $\partial\hat Y$) and  
$\zeta_{_\ell}:\partial\hat Y\times\mathbb S^0\to \partial N_{\hat{\ell}}$ be a diffeomeomorphism such that  
$\eta(\zeta_{_\ell}(\hat{\mu}\times\mathbb S^0))=0$. It is said that the space $\hat V_{\hat{\ell}}=
(\hat V\setminus int\,N_{\hat{\ell}})\cup_{\zeta_{_\ell}}
(\hat{Y}\times\mathbb S^0)$ {\it is obtained from the manifold $\hat V$ by a cut-gluing operation  along the torus} $\hat{\ell}$.

Structure of a smooth closed 3-manifold on the set $\hat V_{\hat{\ell}}$ 
 induced by the natural projection $p_{_{\hat{\ell}}}:(\hat V\setminus int\,N_{\hat{\ell}})\sqcup
(\hat{Y}\times\mathbb S^0)\to\hat V_{\hat{\ell}}$. 
Since any homeomorphism of the boundary of the solid torus that translates meridian to meridian can be extended to the solid torus \cite{Ro}, the described operation is correctly defined, that is, it does not depend (up to homeomorphism) on the choice of the tubular neighborhood ${N}_{\hat{\ell}}$ and the homeomorphism $\zeta_{_\ell}$.

Similarly, a {\it cut-gluing operation  along an $\eta$-essential smooth Klein bottle} $\hat{\ell}\subset\hat V$ is defined and it consists of a gluing the solid torus $\hat Y$ to the boundary of the manifold 
$\hat V\setminus int\,N_{\hat{\ell}}$. Also, the cut-gluing operation is generalized to the set $\hat L\subset\hat V$, which is a disjoint union of smooth $\eta$-essential tori and Klein bottles, we will denote by $V_{\hat L}$ the manifold obtained as a result of such an operation. 

For a gradient-like diffeomorphism $f\in MS(M^3)$ each connected component $\hat\ell^s\,(\hat\ell^u)$ of the sets $\hat L^s_{f}\,(\hat L^u_f)$ is either a torus or a Klein bottle,  $\eta_{_f}$-essentially embedded into the manifold $\hat V_f$. 

The scheme of any gradient-like diffeomorphism $f\in MS(M^3)$ is an abstract schema in the sense of the following definition.

A collection $S=(\hat V,\eta,\hat{L}^s,\hat{L}^u)$ is called an {\it abstract scheme} if:

1) $\hat V$ is a simple manifold whose fundamental group admits an epimorphism $\eta:\pi_1(\hat V)\to \mathbb Z$;

2) the sets $\hat{L}^s,\,\hat{L}^u\subset\hat V$ are transversally intersecting disjoint unions of smooth    
$\eta$-essential tori and Klein bottles;

3) each connected component of the  manifolds $\hat V_{\hat{L}^s},\,\hat V_{\hat{L}^u}$ is homeomorphic to the manifold $\mathbb S^2\times\mathbb S^1$.

\begin{proposition}[\cite{BoGrPo2017}, Theorem 1]\label{rems3} For any abstract scheme $S=(\hat V,\eta,\hat{L}^s,\hat{L}^u)$ there is a gradient-like diffeomorphism $f\in MS(M^3)$ whose scheme $S_f$ is equivalent to the scheme $S$.
\end{proposition}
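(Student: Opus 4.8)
The plan is to reconstruct the pair $(M^3,f)$ directly from the data $S=(\hat V,\eta,\hat L^s,\hat L^u)$ and to arrange the construction so that its scheme is literally $S$; equivalence of schemes then holds trivially, so the classification (Proposition \ref{clms3}) is not needed for existence --- only the reconstruction is. The guiding principle is that $\hat V$ together with $\eta$ should become the characteristic space $\hat V_f$ together with $\eta_f$, that the components of $\hat L^s$ should be the orbit-space projections of the two-dimensional stable manifolds of index-$1$ saddles, and those of $\hat L^u$ the projections of the two-dimensional unstable manifolds of index-$2$ saddles. Accordingly I would build the supporting manifold in two dual halves --- an attractor neighborhood and a repeller neighborhood --- and glue them through the characteristic region prescribed by $(\hat V,\eta)$.

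First I would pass to the infinite cyclic covering $q\colon V\to\hat V$ associated with $\ker\eta$, with deck-transformation generator $g\colon V\to V$, so that $\eta$ records the $g$-translation number of the lift of each loop; this $g$ is the prospective restriction $f|_{V_f}$. The attractor is then reconstructed from $\hat L^s$ and condition (3). Each component of $\hat V_{\hat L^s}$ is homeomorphic to $\mathbb S^2\times\mathbb S^1$; since $\pi_1(\mathbb S^2\times\mathbb S^1)=\mathbb Z$, the restricted epimorphism is, up to sign, the projection onto the $\mathbb S^1$-factor, so the associated cyclic cover is $\mathbb S^2\times\mathbb R$. Capping the attracting end of each such cover with a single point realizes a sink $\omega$ whose punctured basin $W^s_\omega\setminus\omega\cong\mathbb R^3\setminus\{0\}$ carries $g$ as a contraction. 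Reversing the cut-gluing that produced $\hat V_{\hat L^s}$ --- excising the glued-in solid tori and re-identifying along $\hat L^s$ --- reinserts the index-$1$ saddles: every $\eta$-essential component of $\hat L^s$ lifts to a properly embedded cylinder $\mathbb R^2\setminus\{0\}$, the punctured stable manifold of a saddle $\sigma\in\Omega_1$, whose orientation type is read off from Proposition \ref{T_2.1.3.} (torus for $\nu_\sigma=+1$, Klein bottle for $\nu_\sigma=-1$). This yields the attractor $A$ as a graph of sinks and one-dimensional separatrices, a trapping neighborhood $N_A$, and an extension of $g$ across $A$.

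Dually, using $\hat L^u$ and condition (3) for $\hat V_{\hat L^u}$, I would reconstruct the repeller $R$ (sources together with the one-dimensional stable separatrices of index-$2$ saddles) and its punctured basin. By the identity $V_f=W^s_{A\cap\Omega_f}\setminus A=W^u_{R\cap\Omega_f}\setminus R$, both halves are canonically identified with the same cover $V$ carrying the same $g$, so setting $M^3=A\cup V\cup R$ and letting $f$ equal $g$ on $V$ and the standard linear node and saddle models near $A\cup R$ gives a candidate diffeomorphism. Condition (2) --- transversality of $\hat L^s$ and $\hat L^u$ --- guarantees that the heteroclinic set $W^s_{\Omega_1}\cap W^u_{\Omega_2}$ consists of transversal curves, and the absence of intersections between unstable manifolds of equal-index saddles makes $f$ gradient-like. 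To force the projections of the unstable manifolds to coincide with the prescribed $\hat L^u$ rather than merely with some isotopic copy, I would invoke Proposition \ref{IsotBan}: an ambient isotopy of the fundamental-domain quotient $\hat V_A$, isotopic to the identity, is realized by an arc in $E_f$ moving $\hat l^u_f$ onto $\hat L^u$ while leaving $\hat l^s_f=\hat L^s$ fixed. By construction $\hat V_f=V/g=\hat V$, $\eta_f=\eta$, $\hat L^s_f=\hat L^s$ and $\hat L^u_f=\hat L^u$, so $S_f=S$.

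The main obstacle is the gluing-and-compactification step. One must verify that capping the spherical ends supplied by condition (3) and re-attaching the saddle handles produces a genuinely \emph{smooth} closed manifold carrying a \emph{smooth} Morse--Smale diffeomorphism, not merely a homeomorphism, and that the transversal configuration of $\hat L^s\cap\hat L^u$ inside $\hat V$ is faithfully reproduced as the transversal heteroclinic curves of $f$, with the prescribed orientation types and the prescribed translation homomorphism $\eta$ all realized at once. Here condition (3) is indispensable: it is precisely the $\mathbb S^2\times\mathbb S^1$ form of the cut-glued components that lets each end be capped by a ball, so that the covering $V$ closes up to the compact $M^3$; were a component any other simple $3$-manifold, its end could not appear as the punctured basin of a single node and the construction would fail.
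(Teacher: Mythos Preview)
The paper does not contain a proof of this proposition: it is quoted verbatim as Theorem~1 of \cite{BoGrPo2017} and used as an external input, so there is no ``paper's own proof'' to compare against. Your outline is therefore being measured against a result the authors simply import.

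That said, your sketch is in the spirit of how the realization theorem is actually proved in the cited work: one passes to the infinite cyclic cover determined by $\eta$, uses condition~(3) to recognize each component of $\hat V_{\hat L^s}$ and $\hat V_{\hat L^u}$ as $\mathbb S^2\times\mathbb S^1$ so that the corresponding cover is $\mathbb S^2\times\mathbb R$ and the ends can be one-point compactified to produce sinks and sources, and then reinstates the saddles by undoing the cut-gluing along $\hat L^s$ and $\hat L^u$. Your identification of the orientation types via Proposition~\ref{T_2.1.3.} and your observation that condition~(3) is exactly what makes the end-capping possible are both correct and central.

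The honest gap you yourself flag is real and is where the work lies in \cite{BoGrPo2017}: the passage from a topological gluing of covers to a \emph{smooth} closed $M^3$ with a \emph{smooth} $f$ whose local models at the nodes and saddles are the canonical linear ones, and whose heteroclinic set reproduces $\hat L^s\cap\hat L^u$ transversally, requires a careful handle-by-handle construction (building linearizing neighborhoods first and then assembling $M^3$ from them) rather than a bare compactification argument. Your appeal to Proposition~\ref{IsotBan} at the end is also slightly premature: that proposition presupposes that $f$ already exists and lives in $MS(M^3)$, so it can adjust $\hat L^u_f$ within an isotopy class but cannot by itself guarantee that the initially constructed $f$ lands in the correct class to begin with. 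In the original proof this alignment is achieved during the assembly, not corrected afterward.
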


\subsection{Topology of 3-manifolds admitting Morse-Smale diffeomorphisms with a given structure of a non-wandering set}
Let $f\in MS(M^3)$. Let's say $$g_{_f}=\frac{r_{_f}-l_{_f}+2}{2},$$ where $r_{_f}$ is the number of saddle points and $l_{_f}$ is the number of nodal periodic points of the diffeomorphism $f$. According to \cite{GrMePo2016}, the number $g_{_f}$ is a non-negative integer for any diffeomorphism $f\in MS(M^3)$.

If $f\in MS(M^3)$ is a gradient-like diffeomorphism. According to proposition \ref{odi}, the closure $cl(\ell^u_\sigma)$ of any one-dimensional unstable separatrix $\ell^u_\sigma$ of the saddle point
$\sigma$ of the diffeomorphism $f$ is homeomorphic to a segment that consists of this separatrix and two points: $\sigma$ and some sink  $\omega$. Let ${L}_\omega$ be a union of unstable one-dimensional separatrices of saddle
points that contain $\omega$ in their closures. According to proposition \ref{T_2.1.3.}, $W^s_\omega$ is homeomorphic to $\mathbb R^3$ and the set $L_\omega\cup\omega$ is a union of simple arcs with a single common point $\omega$, then by analogy with a frame of arcs in $\mathbb R^3$, $L_\omega\cup\omega$ is called {\it a frame of one-dimensional unstable separatrices}.  

According to \cite{GrMeZh2003}, a frame of separatrix $L_\omega\cup\omega$ is called {\it tame} if there is a homeomorphism $\psi_\omega:W^
{s}_\omega\to \mathbb R^3$ such that $\psi_\omega
(L_\omega\cup\omega)$ --- a frame of rays in $\mathbb R^3$. Otherwise, the separatrix frame is called {\it wild} (see Fig. \ref{miwi}).
\begin{figure}\centering{\includegraphics[scale=
0.7]{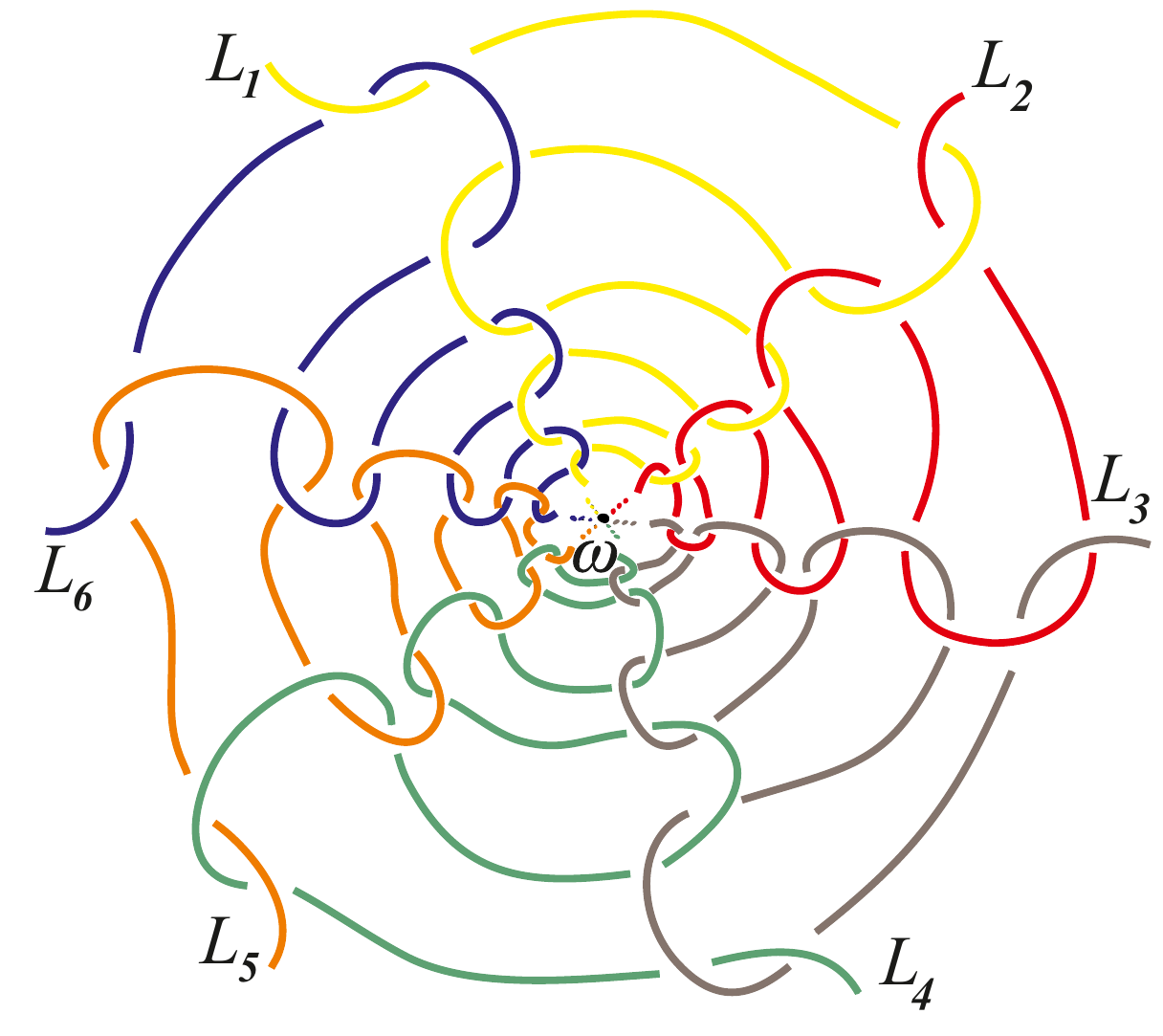}}
		\caption{\small Wild frame  of separatrix in which each separatrix is tame} \label{miwi}
\end{figure}

If $\alpha$ is the source of the diffeomorphism $f$, then the tame (wild)
bundle $L_\alpha$ of one-dimensional stable separatrices is similarly defined.

\begin{proposition}[\cite{GrMeZh2003}, Theorem 4.1] \label{Heeg} If all frames of one-dimensional separatrices of a  gradient-like diffeomorphism $f\in MS(M^3)$ are tame, then the ambient manifold $M^3$ admits a Heegaar splitting of the genus $g_f$.
\end{proposition}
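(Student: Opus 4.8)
The plan is to recover $M^3$ as a union of two handlebodies, each obtained as a regular neighbourhood of one of the dual one-dimensional sets $A_f=W^u_{\Omega_0\cup\Omega_1}$ and $R_f=W^s_{\Omega_2\cup\Omega_3}$ (here $\Omega_i$ denotes the set of periodic points of Morse index $i$), to use tameness of the separatrix frames to guarantee that these neighbourhoods are genuine handlebodies glued along a common surface, and finally to read off their common genus from the numbers $|\Omega_i|$.

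I would first describe $A_f$ and $R_f$ as finite graphs. For an index-$1$ saddle $\sigma$ the manifold $W^u_\sigma$ is a line whose two separatrices take part in no heteroclinic intersection (the stable manifolds of the other saddles have too small a dimension to meet $W^u_\sigma$ transversally, and gradient-likeness excludes heteroclinic points), so by Proposition~\ref{odi} each separatrix has closure $\sigma\cup\ell^u_{\sigma}\cup\omega$ with $\omega$ a sink. Thus $A_f$ is a finite graph whose vertices are the $|\Omega_0|+|\Omega_1|$ sinks and index-$1$ saddles and whose edges are the $2|\Omega_1|$ separatrix-closures; dually $R_f$ is a finite graph with $|\Omega_2|+|\Omega_3|$ vertices and $2|\Omega_2|$ edges. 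Since $\dim A_f\leqslant 1=n-2$ and $\dim R_f\leqslant 1$, Proposition~\ref{AfR} shows that both $A_f$ and $R_f$ are connected, so $b_1(A_f)=2|\Omega_1|-(|\Omega_0|+|\Omega_1|)+1=|\Omega_1|-|\Omega_0|+1$ and likewise $b_1(R_f)=|\Omega_2|-|\Omega_3|+1$. As the alternating sum of the numbers of periodic points of each Morse index equals $\chi(M^3)=0$, we get $|\Omega_1|-|\Omega_0|=|\Omega_2|-|\Omega_3|$ and therefore
\[
g_f=\tfrac{r_f-l_f+2}{2}=\tfrac{(|\Omega_1|+|\Omega_2|)-(|\Omega_0|+|\Omega_3|)+2}{2}=|\Omega_1|-|\Omega_0|+1=b_1(A_f)=b_1(R_f).
\]

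Next I would use the hypothesis to promote these graphs to handlebodies. Away from the nodes $A_f$ is locally flat: the open separatrices are smooth $1$-submanifolds and near an index-$1$ saddle $f$ is linearisable, so there the model is standard; at a sink $\omega$ the hypothesis supplies a homeomorphism $\psi_\omega\colon W^s_\omega\to\mathbb R^3$ carrying the whole frame $L_\omega\cup\omega$ to a frame of rays, so a small round ball pulled back through $\psi_\omega$ meets $A_f$ in a standard ``ball with spokes''. Gluing such ball-with-spokes blocks at the sinks to linearising blocks at the index-$1$ saddles along smooth tubular neighbourhoods of the separatrix arcs yields a compact handlebody $H_A\subset M^3$ that deformation retracts onto $A_f$, hence of genus $b_1(A_f)$. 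One may take $H_A$ to be a trapping neighbourhood of the attractor $A_f$; then $H_R=\mathrm{cl}(M^3\setminus H_A)$ is a trapping neighbourhood of $R_f$ for $f^{-1}$, and since $M^3\setminus A_f$ deformation retracts onto $R_f$ along the orbits of $f^{-1}$ (with $V_f=M^3\setminus(A_f\cup R_f)$ lying between the two sets), $H_R$ deformation retracts onto the likewise tamely embedded graph $R_f$; repeating the construction at the sources shows $H_R$ is a handlebody, necessarily of genus $b_1(R_f)=b_1(A_f)$. Then $M^3=H_A\cup H_R$ with $H_A\cap H_R=\partial H_A=\partial H_R$ is the sought Heegaard splitting of genus $g_f$.

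I expect the main obstacle to be the passage from a set-theoretic regular neighbourhood to an honest handlebody: tameness is assumed only at the nodes, so one must verify that gluing the standard ball-with-spokes models at the sinks to the linearising blocks at the index-$1$ saddles along smooth tubular neighbourhoods of the separatrices really produces a handlebody, and that the trapping neighbourhood $H_A$ can be chosen so that its closed complement is a regular neighbourhood of the dual graph $R_f$ with exactly one common boundary surface $\partial H_A=\partial H_R$. Once this local-flatness bookkeeping is settled, nothing remains to check away from the nodes, and the equality of the two genera — which must hold for any Heegaard splitting — is in fact forced independently by the identity $|\Omega_0|+|\Omega_2|=|\Omega_1|+|\Omega_3|$, which at the same time identifies that genus with $g_f$.
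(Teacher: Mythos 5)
Note first that the paper itself gives no proof of Proposition \ref{Heeg}: it is imported verbatim from \cite{GrMeZh2003} (Theorem 4.1), so your attempt can only be measured against the known argument there. Your combinatorial bookkeeping is correct and matches the known strategy: by gradient-likeness and Proposition \ref{odi} the sets $A_f=W^u_{\Omega_0\cup\Omega_1}$ and $R_f=W^s_{\Omega_2\cup\Omega_3}$ are finite graphs, they are connected by Proposition \ref{AfR}, and the identity $|\Omega_0|-|\Omega_1|+|\Omega_2|-|\Omega_3|=\chi(M^3)=0$ (which the paper states as known; it deserves at least a Lefschetz-type justification) gives $b_1(A_f)=b_1(R_f)=g_f$. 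Using tameness of the frames to produce a handlebody (regular) neighbourhood $H_A$ of the graph $A_f$, and likewise for $R_f$, is also the right idea.

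The genuine gap is the step you yourself flag and then dispatch in one sentence: that $H_R=cl(M^3\setminus H_A)$ is a handlebody, equivalently that the region between handlebody neighbourhoods of $A_f$ and $R_f$ is a product $\partial H_A\times[0,1]$. The phrase ``$M^3\setminus A_f$ deformation retracts onto $R_f$ along the orbits of $f^{-1}$'' is not an argument for a diffeomorphism: there are no flow lines to retract along, and no such retraction is constructed. Moreover, even granting a deformation retraction, a compact orientable $3$-manifold with boundary that is homotopy equivalent to a graph need not be a handlebody; what is needed is that $cl(M^3\setminus H_A)$ is a \emph{regular} neighbourhood of the tamely embedded graph $R_f$, and that is precisely the assertion to be proved. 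Repeating the ball-with-spokes construction at the sources only yields \emph{some} handlebody $Q\supset R_f$; it does not identify $Q$ with $cl(M^3\setminus H_A)$, and one is left with the unproved claim that the compact region $cl(M^3\setminus(H_A\cup Q))$, a cobordism between the two genus-$g_f$ surfaces $\partial H_A$ and $\partial Q$ lying in $V_f$, is a product. Establishing this (for instance by choosing the neighbourhoods trapping and assembling a product structure from the $f$-iterates of the shell $cl(H_A\setminus f(H_A))$, which is itself not obviously a product) is the substantive dynamical--topological content of Theorem 4.1 of \cite{GrMeZh2003}; as written, your proposal assumes it rather than proves it. A secondary, smaller point: the handlebody structure and the trapping property of $H_A$ must be arranged simultaneously, which also requires an argument rather than a remark.
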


\begin{proposition}[\cite{BoGrMePe2002}, Theorem 1] \label{svaz} Let $f\in MS(M^3)$ be a Morse-Smale diffeomorphism without heteroclinic curves. Then the following statements are true: 

1) if $g_{_f}=0$, then $M^3$ -- 3-sphere; 

2) if $g_{_f}>0$, then $M^3$ --  connected sum of $g_{_f}$ copies of $\mathbb S^2\times\mathbb S^1$. 

Conversely, for any non-negative integers $r, l, g$ such that the number $g=\frac{r-l+2}{2}$ is an integer and non-negative, there is a diffeomorphism $f\in MS(M^3)$ without heteroclinic curves, with the following properties: 

a) $M^3$ -- 3-sphere if $g=0$ and $M^3$ -- connected sum of $g$ copies of $\mathbb S^2\times\mathbb S^1$ if $g>0$;

b) the non-wandering diffeomorphism set $f$ consists of $r$ saddle and $l$ node points. 
\end{proposition}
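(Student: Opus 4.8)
The plan is to handle the two directions separately, and crucially to route the forward implication through the dual attractor--repeller decomposition rather than through Heegaard tameness (Proposition \ref{Heeg}), since the conclusion must survive the wild separatrices that Pixton-type examples exhibit.

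\emph{Forward direction, setup.} Writing $\Omega_i$ for the set of periodic points of Morse index $i$, I would set $A=\Omega_0\cup W^u_{\Omega_1}$ and $R=\Omega_3\cup W^s_{\Omega_2}$. In dimension $3$ the only heteroclinic intersection of positive dimension is $W^s_{\Omega_1}\cap W^u_{\Omega_2}$ (a curve), so its absence, via Proposition \ref{th_2.1.1.}(3), forces the closure of every one-dimensional unstable separatrix of an index-$1$ saddle to meet only sinks and other index-$1$ saddles; hence $A$ is a closed invariant $1$-complex, and dually so is $R$. By Proposition \ref{AfR} the pair $A,R$ is a dual attractor--repeller with $\dim A,\dim R\leqslant 1=n-2$, so Proposition \ref{Vf} gives a connected characteristic set $V=M^3\setminus(A\cup R)$ and an epimorphism $\eta:\pi_1(\hat V)\to\mathbb Z$. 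Taking a trapping handlebody neighbourhood $U_A$ of the graph $A$ and the dual handlebody $U_R$ of $R$ yields a genus-$g$ Heegaard splitting $M^3=U_A\cup U_R$. A Euler-characteristic count on the graph $A$ (with $|\Omega_0|+|\Omega_1|$ vertices and $2|\Omega_1|$ edges) gives $g=b_1(A)=|\Omega_1|-|\Omega_0|+1$, and dually $g=b_1(R)=|\Omega_2|-|\Omega_3|+1$; these agree because the Morse/Poincar\'e--Hopf identity $|\Omega_0|-|\Omega_1|+|\Omega_2|-|\Omega_3|=\chi(M^3)=0$ forces $|\Omega_1|-|\Omega_0|=|\Omega_2|-|\Omega_3|$, and substituting into $g_f=\tfrac{r_f-l_f+2}{2}$ yields $g=g_f$.

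\emph{Forward direction, crux.} The hard and decisive step is to upgrade ``$M^3$ has a genus-$g_f$ Heegaard splitting'' to ``$M^3\cong\#_{g_f}(\mathbb S^2\times\mathbb S^1)$'', i.e.\ to pin down the gluing. Here I would use that disjointness of the projected two-dimensional separatrices of $W^s_{\Omega_1}$ and $W^u_{\Omega_2}$ in $\hat V$ is exactly equivalent to the absence of heteroclinic curves, their components being $\eta$-essential tori or Klein bottles by Proposition \ref{T_2.1.3.}. The belt spheres $cl(W^u_\sigma)$ of the index-$2$ saddles are genuine $2$-spheres by Proposition \ref{odi} (no separatrix meets $W^s_{\Omega_1}$), and a maximal disjoint subsystem of essential ones among them produces the reducing spheres of the decomposition. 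I would then cut $M^3$ along this system and argue, using that each characteristic component is irreducible or $\mathbb S^2\times\mathbb S^1$ (Proposition \ref{Vf}) together with Propositions \ref{ss2s1}, \ref{f_1} and \ref{ex_6} controlling spheres and tori inside $\mathbb S^2\times\mathbb S^1$, that each sphere is non-separating and essential and that the complement reassembles as a punctured $3$-sphere, giving $\#_{g_f}(\mathbb S^2\times\mathbb S^1)$ (and $\mathbb S^3$ when $g_f=0$, recovering Reeb). This is the step I expect to fight hardest, precisely because wildness is permitted: one cannot standardise the embeddings locally and must instead extract the sum decomposition from the purely topological incompressibility and irreducibility data of $\hat V$. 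Heteroclinic points between equal-index saddles are harmless, since they keep $A,R$ one-dimensional; if preferred they can be removed by an isotopy creating no curves.

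\emph{Converse.} For realizability I would build $f$ from explicit blocks. The base models are the ``north--south'' diffeomorphism on $\mathbb S^3$ (one sink, one source; $g=0$) and the standard gradient-like diffeomorphism on $\#_g(\mathbb S^2\times\mathbb S^1)$ with one periodic point of each index per summand, i.e.\ $(r,l)=(2g,2)$ and $g_f=g$. A local \emph{node doubling}, replacing a sink by a sink--saddle--sink triple inside a small ball (and dually for sources), adds one node and one saddle, leaves $M^3$ and $g=\tfrac{r-l+2}{2}$ unchanged, and creates no new two-dimensional heteroclinic intersection. Since realizability on a closed connected manifold forces $l\geqslant 2$ and hence $r=l+2g-2\geqslant 0$, performing $l-2$ node doublings on the genus-$g$ model realises every admissible triple $(r,l,g)$ on $\#_g(\mathbb S^2\times\mathbb S^1)$ (and on $\mathbb S^3$ when $g=0$), with each block manifestly Morse--Smale and free of heteroclinic curves; alternatively one may assemble an abstract scheme with disjoint stable and unstable separatrix sets and invoke Proposition \ref{rems3}.
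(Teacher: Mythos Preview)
The proposition you are attempting to prove is not proved in this paper at all: it is quoted verbatim from \cite{BoGrMePe2002} (Theorem~1 there) as one of the preliminary facts in Section~2, with no argument given. So there is no ``paper's own proof'' to compare your proposal against.

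That said, your sketch has a genuine gap precisely at the point you flag as hardest. You write ``Taking a trapping handlebody neighbourhood $U_A$ of the graph $A$ \ldots\ yields a genus-$g$ Heegaard splitting'', but this is exactly what fails when the one-dimensional separatrices are wild: a trapping neighbourhood of a wildly embedded graph need not be a handlebody, and you have given no reason it should be. Your subsequent plan to cut along the $2$-spheres $cl(W^u_\sigma)$ for index-$2$ saddles has the same defect: those spheres are only topologically embedded and may be wild at the source, so ``cutting'' along them is not well-defined without further work. The propositions you invoke (\ref{f_1}, \ref{ss2s1}, \ref{ex_6}) all require smooth or cylindrical embeddings, which you have not secured.

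The actual route in \cite{BoGrMePe2002} is signalled by the fact that Proposition~\ref{sphere-good} in this paper comes from the \emph{same} reference: one tames each wild $2$-sphere $cl(W^u_{\sigma^2})$ near its unique wild point (the source) by finding a smooth $3$-ball whose boundary meets the sphere in a single curve, and then replaces the wild cap by the smooth one. This produces genuinely smooth essential $2$-spheres to cut along, after which the connected-sum decomposition goes through. Your proposal does not use this idea, and without it the argument does not close.
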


Recall that {\it lens space} is defined as a gluing of two solid tori by means of a homeomorphism of their boundaries and is denoted by $L_{p,q},\,p,q\in\mathbb Z$, where ${\langle p,q\rangle}$ is the homotopy type of the image of a meridian with respect to the gluing homeomorphism. Some well-known $3$-manifolds are actually lens spaces, for example, the three-dimensional sphere $\mathbb S^3=L_{1,0}$, the manifold $\mathbb S^2\times\mathbb S^1=L_{0,1}$, the projective space $\mathbb RP^3=L_{1,2}$. 

\begin{proposition}[\cite{GrMeZh2003}, Theorem 6.1] \label{Lpq} Let $f:L_{p,q}\to L_{p,q}$ be a Morse-Smale diffeomorphism whose non-wandering set consists of exactly four points. Then
\begin{enumerate}
\item[1)] $f$ -- gradient-like;
\item[2)] periodic points of the diffeomorphism $f$ have pairwise different Morse indices;
\item[3)] if all frames of one-dimensional separatrices of $f$ are tame, then the wandering set of diffeomorphism $f$ contains at least $p$ of non-compact heteroclinic curves. 
\end{enumerate} 
\end{proposition}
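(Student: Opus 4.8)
The plan is to move the problem from $M^3$, where one-dimensional separatrices may be wildly embedded, into the smooth characteristic space, to determine that space, and then to reconstruct $M^3$ from the scheme, reading off the lens parameter from the heteroclinic data. \emph{Step 1 (the characteristic space).} Put $A_f=W^u_{\omega_f}\cup W^u_{\sigma_f^1}$ and $R_f=W^s_{\alpha_f}\cup W^s_{\sigma_f^2}$. Since two curves cannot intersect transversally in a $3$-manifold, the only heteroclinic intersections are one-dimensional, so $f$ is gradient-like and, by Proposition~\ref{odi}, $A_f=cl(W^u_{\sigma_f^1})$ and $R_f=cl(W^s_{\sigma_f^2})$ are topological circles through $\omega_f,\sigma_f^1$ and through $\alpha_f,\sigma_f^2$, forming a dual attractor--repeller pair of dimension $1\leqslant 3-2$. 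Propositions~\ref{AfR}, \ref{Vf} and \ref{sl_1} then give that $V_f=M^3\setminus(A_f\cup R_f)$ is connected, $\hat V_f=V_f/f$ is a connected closed orientable simple $3$-manifold, $p_f\colon V_f\to\hat V_f$ is the infinite cyclic cover associated with $\ker\eta_f$ for the epimorphism $\eta_f\colon\pi_1(\hat V_f)\to\mathbb Z$, and $S_f=(\hat V_f,\eta_f,\hat L^s_f,\hat L^u_f)$ is an abstract scheme. In particular $\hat L^s_f$ and $\hat L^u_f$ are each a single $\eta_f$-essential surface --- a torus when the orientation type $\nu$ of the corresponding saddle is $+1$ and a one-sided Klein bottle when $\nu=-1$, as forced by orientability of $\hat V_f$ --- and every connected component of $\hat V_{f,\hat L^s_f}$ and of $\hat V_{f,\hat L^u_f}$ is homeomorphic to $\mathbb S^2\times\mathbb S^1$.

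\emph{Step 2 (taming the wildness and computing $V_f$).} The wildness points of $A_f$, $R_f$ and of the two-dimensional invariant manifolds (equivalently, of $\hat L^s_f$, $\hat L^u_f$) are dealt with locally: near each such point one applies Proposition~\ref{sphere-good} inside the irreducible pieces (together with Proposition~\ref{ex_6}) to cut the relevant wild sphere/surface along a single curve by a standard $3$-ball, replacing the wild piece by a smooth disk without altering the ambient homeomorphism type. One then works in the smooth models $\hat W^s_{\omega_f}=(W^s_{\omega_f}\setminus\omega_f)/f\cong\mathbb S^2\times\mathbb S^1$ and $\hat W^u_{\alpha_f}=(W^u_{\alpha_f}\setminus\alpha_f)/f\cong\mathbb S^2\times\mathbb S^1$. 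The images of the one-dimensional separatrices are $\eta_f$-essential circles in these copies of $\mathbb S^2\times\mathbb S^1$; applying Propositions~\ref{f_1} and \ref{ss2s1} (essential tori bound solid tori; spheres bound balls or are $\mathbb S^2\times\{s_0\}$) one shows that such a circle has $\eta_f$-value $\pm1$, that its tubular neighbourhood and the complementary solid torus are controlled, and that after removing the neighbourhoods of the separatrix circles what remains is a copy of $T^2\times I$. Gluing the two resulting copies of $T^2\times I$ along the neighbourhood of $\hat L^s_f$ as prescribed by the characteristic structure exhibits $\hat V_f$ as a surface bundle over the circle with torus or Klein-bottle fibre; in particular $V_f$ is homeomorphic to $T^2\times\mathbb R$.

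\emph{Step 3 (reconstruction and the lens parameter).} Since $M^3=A_f\cup V_f\cup R_f$ and $V_f\cong T^2\times\mathbb R$, the manifold $M^3$ is obtained by filling the two ends of $T^2\times\mathbb R$. A small dynamical trapping neighbourhood $U_{A_f}$ of $A_f$ (respectively $U_{R_f}$ of $R_f$) is built from a standard $3$-ball about the node by attaching one $1$-handle along two disjoint discs of its boundary sphere, hence is a solid torus; combined with Step 2 --- which shows the tori $\partial U_{A_f}$ and $\partial U_{R_f}$ are parallel in $V_f$, so that $M^3\setminus \operatorname{int}U_{A_f}$ is again a solid torus --- this presents $M^3$ as a union of two solid tori along their common boundary torus. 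Hence $M^3$ is homeomorphic to a lens space $L_{p',q'}$ (with $L_{0,1}=\mathbb S^2\times\mathbb S^1$ covering the heteroclinic-free situation, cf.\ Propositions~\ref{svaz} and \ref{Heeg}). It remains to show $p'=p$. A meridian of $U_{R_f}$ bounds a disc contained in $W^u_{\sigma_f^2}$; read on the Heegaard torus $\partial U_{A_f}$, its class meets a longitude of $U_{A_f}$ algebraically $p'$ times, and transporting this count into $\hat V_f$ turns it into the algebraic intersection number of $\hat L^s_f$ with $\hat L^u_f$, taken along the images of the heteroclinic curves. A curve $\gamma\subset H_f$ contributes $0$ when it is compact (i.e.\ not $f$-invariant) and contributes the sign of the triple $v_\gamma$, that is $I_\gamma$, when it is non-compact; therefore $p'=\bigl|\sum_{\gamma\subset H_f}I_\gamma\bigr|=p$, and $M^3\cong L_{p,q}$.

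\emph{Expected main difficulty.} The crux is Step 2 together with the claim in Step 3 that $M^3\setminus\operatorname{int}U_{A_f}$ is a solid torus: because the one-dimensional separatrices may be wildly embedded, $A_f$ and $R_f$ can be wild circles with badly behaved complements, so a genus-$1$ Heegaard splitting cannot be exhibited in $M^3$ directly and one cannot take naive tubular neighbourhoods. The whole device of descending to the smooth characteristic space $\hat V_f$ and the model spaces $\mathbb S^2\times\mathbb S^1$, and of taming wild spheres point-by-point via Proposition~\ref{sphere-good}, exists precisely to overcome this obstruction. A secondary delicate point is the orientation bookkeeping in Step 3 that matches the local intersection signs to the convention defining $v_\gamma$ and $I_\gamma$.
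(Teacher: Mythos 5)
This proposition is not proved in the paper at all: it is quoted from \cite{GrMeZh2003} (Theorem 6.1), and the closest in-paper relative is Theorem \ref{TT1}, proved in Section 5.1, which runs in the \emph{opposite} direction (from the dynamics of $f\in G_p$ to the topology of $M^3$) and without any tameness hypothesis. Your proposal is essentially a sketch of that converse statement rather than of Proposition \ref{Lpq} as stated, and it leaves the actual assertions unproved. Part 2) is assumed outright: you label the four points $\omega_f,\sigma_f^1,\sigma_f^2,\alpha_f$ with indices $0,1,2,3$ from the first line, whereas one must first exclude the other four-point configurations (e.g.\ two sinks, one index-1 saddle, one source, which by Proposition \ref{svaz} forces $M^3\cong\mathbb S^3$); and your argument for 1) (dimension count killing heteroclinic points) only works once 2) is known, since two saddles of equal index could meet transversally in points ($1+2-3=0$).

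The concrete fatal step is Step 2. The claim that wildness can be removed ``locally \dots without altering the ambient homeomorphism type,'' so that the complement of the separatrix neighbourhoods in $\hat V_{\omega_f}\cong\mathbb S^2\times\mathbb S^1$ is $T^2\times I$ and $V_f\cong T^2\times\mathbb R$, is false in general: by Proposition \ref{wild}, wild embedding of $W^u_{\sigma_f^1}$ corresponds exactly to $\hat V_{\omega_f}\setminus int\,N_{L^i_f}$ \emph{not} being a solid torus, and the constructions of Section 5.2 realize precisely such knots; Proposition \ref{sphere-good} does not replace a wild sphere by a smooth one in the same position --- in the paper it is used quite differently, to produce a smooth ball $B$ from which the solid torus $Q_1=B\cup N_I$ is assembled. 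Under the tameness hypothesis of 3) the legitimate shortcut is Proposition \ref{Heeg} (tame frames yield a genus-1 Heegaard splitting), which is how \cite{GrMeZh2003} actually proceed; your taming paragraph does neither. Finally, in Step 3 you derive the lens parameter as $p'=\bigl|\sum_{\gamma}I_\gamma\bigr|$ by asserting that compact curves contribute $0$ and that the count ``transports'' to $\hat V_f$; but that equality is exactly the hard content of Theorem \ref{TT1}, which in the paper needs all of Section 4 (Lemma \ref{dyn}, removal of inessential curves and of non-orientably oriented ones via Proposition \ref{IsotBan}, Theorem \ref{te1}) before any intersection count is meaningful. For Proposition \ref{Lpq}(3) one does not need this signed count at all: with tame frames one needs only that the relevant meridian on $\partial Q_1$ meets $W^u_{\sigma_f^2}$ in at least $p$ geometric points, each lying on a distinct non-compact heteroclinic curve, giving the stated lower bound. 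As written, the proposal neither establishes 1)--2) nor supplies a valid route to 3).
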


\section{Dynamics of diffeomorphisms of the class $G$}
 
In this section, we establish some dynamic properties of the diffeomorphism $f:M^3\to M^3$ from the class $G$. 

Recall that the class $G$ consists of diffeomorphisms $f\in MS(M^3)$ having exactly four non-wandering points $\omega_f,\sigma_f^{1},\sigma_f^{2},\alpha_f$ with Morse indices $0,1,2,3$, respectively. 

Due to the absence of heteroclinic points in the diffeomorphism $f$, one-dimensional saddle manifolds contain a unique nodal point in their closures (see, sentence \ref{odi}).  Exactly, 
$$cl(W^{u}_{\sigma_f^1})= W^{u}_{\sigma_f^1} \cup \omega_f,\,cl(W^{s}_{\sigma_f^2})=W^{s}_{\sigma_f^2}\cup  \alpha_f.$$
In this case, by  proposition  \ref{T_2.1.3.}, the sets $A_f=cl(W^{u}_{\sigma_f^1}),\,R_f=cl(W^{s}_{\sigma_f^2})$ are pairwise disjoint topologically embedded circles (see Fig.  \ref{af2}, \ref{neor}, \ref{pic2}, \ref{pic3}), possibly wild at the nodal points.
\begin{figure}[h]
\center{\includegraphics[width=0.7\linewidth]{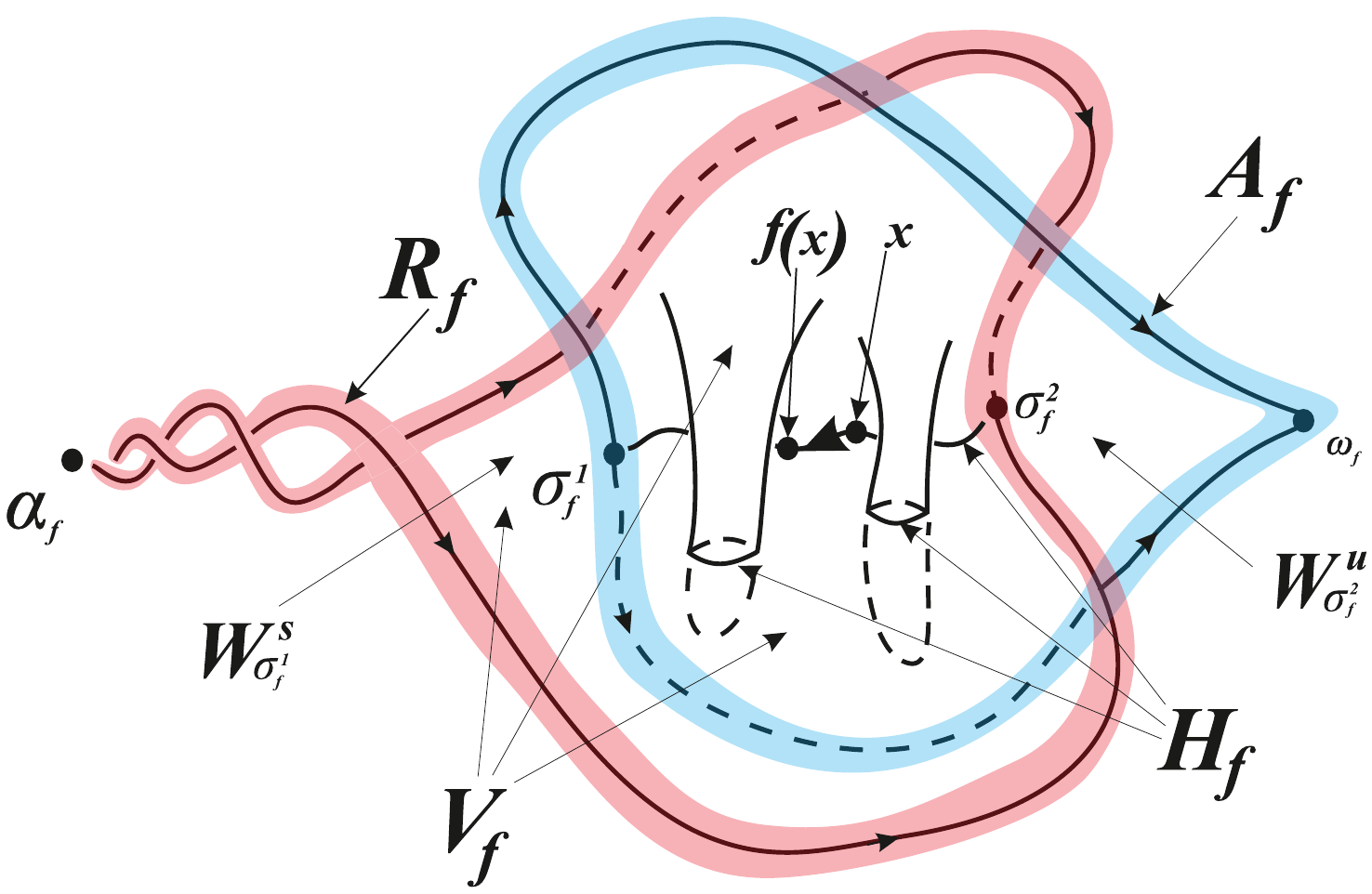}}
\caption{\small Phase portrait of a diffeomorphism $f\in G$ with a non-empty set $H_f$}
\label{pic2}
\end{figure} 
Recall that  
$$H_f=W^s_{\sigma_f^1}\cap W^u_{\sigma_f^2}.$$ If the set $H_f$ is not empty, then, by proposition \ref{th_2.1.1.}, $$cl(W^{s}_{\sigma_f^1})=W^{s}_{\sigma_f^1} \cup R_f,\,cl(W^{u}_{\sigma_f^2})=W^{u}_{\sigma_f^2} \cup A_f.$$ Otherwise,
according to proposition \ref{odi}, the sets $$cl(W^{s}_{\sigma_f^1})=W^{s}_{\sigma_f^1} \cup\alpha_f,\,cl(W^{u}_{\sigma_f^2})=W^{u}_{\sigma_f^2}\cup\omega_f$$ are topologically embedded disjoint two-dimensional spheres (see Fig. \ref{pic3}).
\begin{figure}[h]
\center{\includegraphics[width=0.55\linewidth]{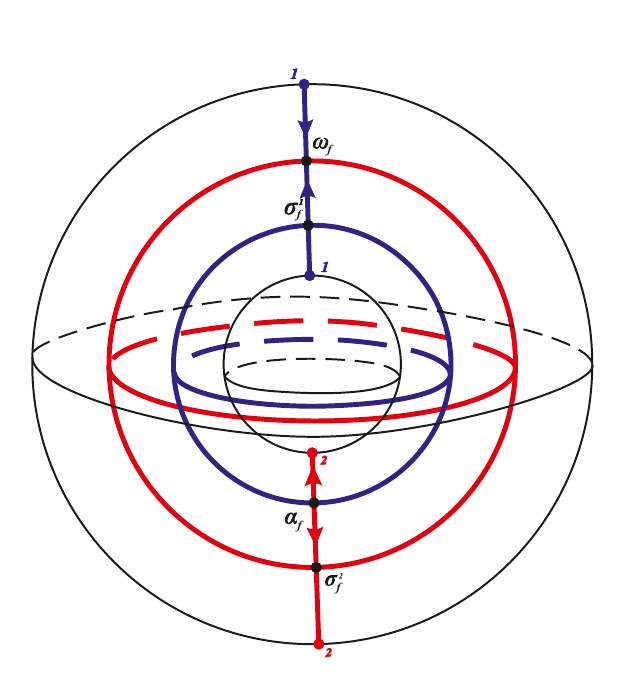}}
\caption{\small Phase portrait of a diffeomorphism $f\in G$ with an empty set $H_f$}
\label{pic3}
\end{figure}

\subsection{Consistent neighborhoods system}
Let $\mathcal N_{1}=\{(x_1,x_2,x_3)\in\mathbb{R}^3: x_1^2(x_2^2+x_3^2)\leqslant 1\}$ and $\mathcal N_{2}=\{(x_1,x_2,x_3)\in\mathbb{R}^3:  (x_1^2+x_2^2)x_3^2\leqslant 1\}$.  
Define in the neighborhood of $\mathcal N_{1}$ a pair of transversal foliations $\mathcal{F}^u_1, \mathcal{F}^s_{1}$ as follows:   
$$\mathcal{F}^u_1=\bigcup\limits_{(c_{2},c_3)\in Ox_2x_3}\{(x_1,x_2,x_3)\in \mathcal N_{1}: (x_{2},x_3)=(c_{2},c_3)\},$$ $$\mathcal{F}^s_{1}=\bigcup\limits_{c_1\in Ox_1}\{(x_1,x_2,x_3)\in \mathcal N_{1} : x_1=c_1\}.$$

Define in the neighborhood of $\mathcal N_{2}$ a pair of transversal foliations $\mathcal{F}^u_2, \mathcal{F}^s_{2}$ as follows:  
$$\mathcal{F}^u_2=\bigcup\limits_{c_3\in Ox_3}\{(x_1,x_2,x_3)\in \mathcal N_{3} : x_3=c_3\},$$ $$\mathcal{F}^s_{2}=\bigcup\limits_{(c_{1},c_2)\in Ox_1x_2}\{(x_1,x_2,x_3)\in \mathcal N_{3} : (x_{1},x_2)=(c_{1},c_2)\}.$$

We define diffeomorphisms 
$\nu_i:\mathbb R^3\to\mathbb R^3$ by formulas: $$\nu_1(x_1,x_2,x_3)=\left(2x_1,\frac{x_2}{2},\frac{x_3}{2}\right),\,\nu_2=a_1^{-1}.$$
Note that for $i\in\{1,2\}$, the set $\mathcal N_{i}$ is invariant with respect to diffeomorphism
$\nu_i$, which translates leaves of the foliation
$\mathcal{F}^u_i$ ($\mathcal{F}^s_{i}$) into leaves of the same foliation. 

By \cite{BoGrLaPo2019}, the saddle point $\sigma_f^i$ of the diffeomorphism $f\in G$ has {\it a linearizing neighborhood} $N_f^{i}$ equipped with the homeomorphism ${\mu}_{i}:N_f^{i}\to {\mathcal N}_{i}$, conjugating the diffeomorphism $f\vert_{{N}_f^{i}}$ with the diffeomorphism $\nu_i|_{{\mathcal N}_{i}}$ and being a diffeomorphism on $N_f^i\setminus(W^s_{\sigma_f^i}\cup W^u_{\sigma_f^i})$. Foliations $\mathcal{F}^u_{i}, \mathcal{F}^s_{i}$ are induced by the homeomorphism
${\mu}_{i}^{-1}$, $f$-invariant foliations of ${F}^u_{i}, {F}^s_{i}$ 
on the linearizing neighborhood  $N_f^{i}$. For any point $x\in N_f^i$ we will denote by ${F}^u_{i,x}$ (${F}^s_{i,x}$) a unique  leaf of the foliation ${F}^u_{i}$ (${F}^s_{i}$) passing through the point $x$.

If the set $H_f$ is empty, then the set $N_f$ of disjoint  linearizing neighborhoods $N_f^1,N_f^2$ of saddle points of the diffeomorphism $f$ is called a {\it  consistent neighborhoods system}, and the foliations $F^s_i, F^u_i$ $(i=1,2)$ -- {\it consistent}.

If $H_f\neq\emptyset$, then we choose $f$-invariant tubular neighborhood $N_{H_f}\subset M^3$ of curves of the set $H_f$, equipped with $f$-invariant $C^{1,1}$-foliation $F$, consisting of two-dimensional disks, transversal to $H_f$. For any point $x\in N_{H_f}$, we will denote by ${F}_{x}$ a unique leaf of the foliation ${F}$ passing through the point $x$. 
\begin{figure}[h!]
	\centerline
	{\includegraphics[width=16 cm]{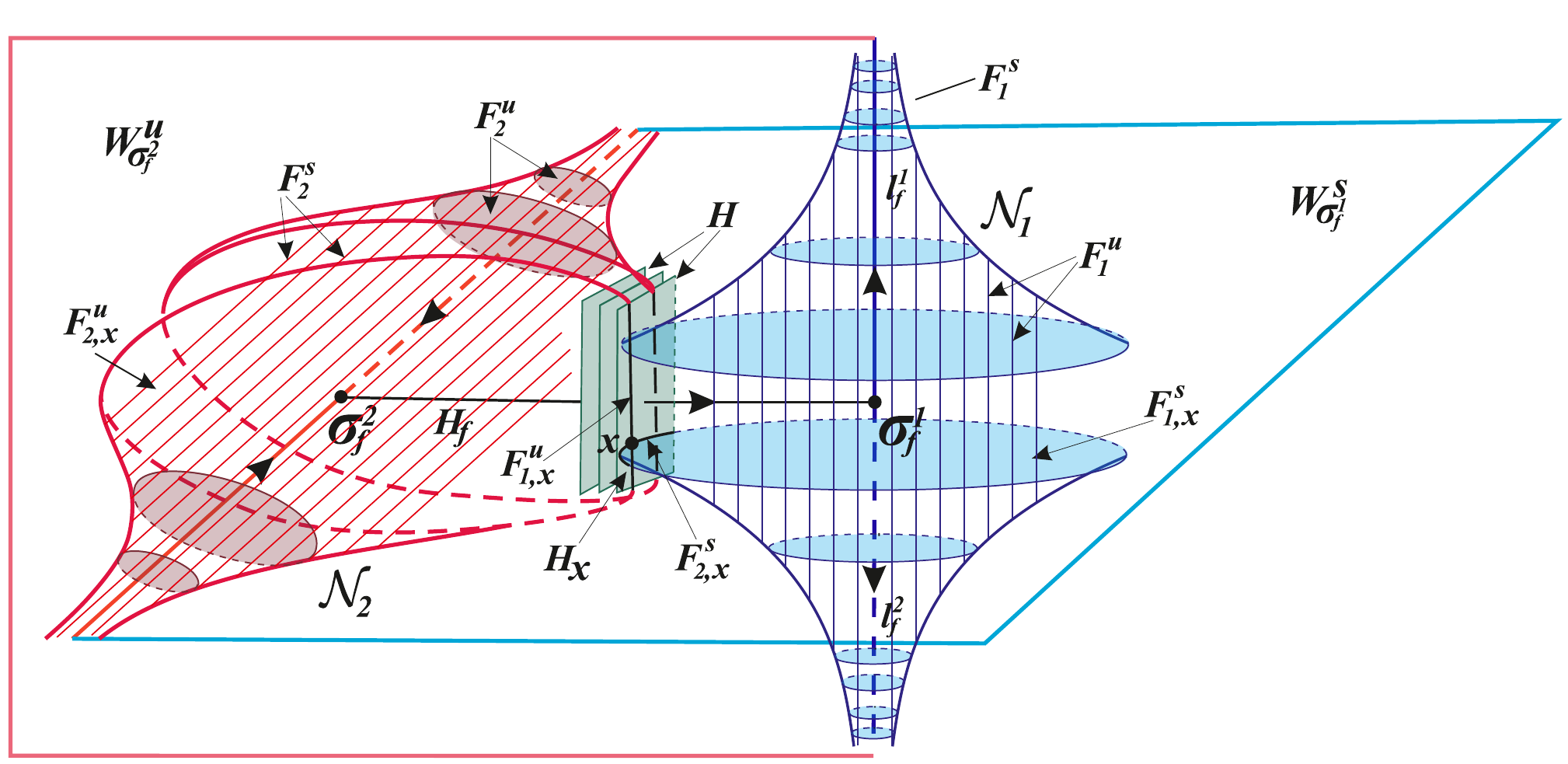}}
	\caption{\small Consistent neighborhoods system}\label{sogl}
\end{figure}

The union $N_f$ of linearizing neighborhoods of $N_f^1,N_f^2$ saddle points of the diffeomorphism $f$ is called a {\it  consistent neighborhoods system}, and the foliations $F^s_i, F^u_i$ $(i=1,2)$, {\it are consistent} if for any point $x\in(N_f^1\cap N_f^2\cap N_{H_f})$ and the leaf $F_x$ of the foliation $F$ passing through the point $x$, the conditions are met (see Fig. \ref{sogl}): $${F}^s_{1,x}\cap F_x={F}^s_{2,x}\cap({N}_f^{1}\cap N_{H_f}),\,\,\,{F}^u_{2,x}\cap F_x={F}^u_{1,x}\cap
({N}_f^{2}\cap N_{H_f}).$$
\begin{proposition}[\cite{BoGrPo2019}, Theorem 1] For any diffeomorphism $f\in G$ there is a consistent neighborhoods system.
\end{proposition}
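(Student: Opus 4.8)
The plan is to produce the linearizing neighborhoods near the two saddles from the local normal form, and, when $H_f\neq\emptyset$, to glue in around the heteroclinic curves a tube foliation that interpolates between the two model foliations. The starting point in both cases is the local linearization quoted above from \cite{BoGrLaPo2019}: it already furnishes linearizing neighborhoods $N_f^1,N_f^2$ of $\sigma_f^1,\sigma_f^2$, the conjugating homeomorphisms $\mu_i$, and the induced $f$-invariant foliations $F^s_i,F^u_i$ obtained from the model pair $\mathcal F^s_i,\mathcal F^u_i$. Suppose first that $H_f=\emptyset$. Then there is nothing to coordinate: by Propositions \ref{odi} and \ref{T_2.1.3.} the sets thickened by $N_f^1$ and by $N_f^2$, namely $W^u_{\sigma_f^1}\cup W^s_{\sigma_f^1}$ and $W^u_{\sigma_f^2}\cup W^s_{\sigma_f^2}$, are disjoint except that the two unstable pieces both limit on $\omega_f$ and the two stable pieces both limit on $\alpha_f$, and neither node lies in the neighborhoods. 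Since in the model sets $\mathcal N_i$ the admissible transverse radius already shrinks to zero along each one-dimensional separatrix as one runs toward the adjacent node, one may shrink $N_f^1,N_f^2$ inside these tapering profiles so that $N_f^1\cap N_f^2=\emptyset$; then $N_f=N_f^1\sqcup N_f^2$ together with its foliations is trivially a consistent system.

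Now assume $H_f\neq\emptyset$. I would first fix the structure of $H_f$: by gradient-likeness there are no heteroclinic points, so every component of $H_f$ is a curve; since every point of $W^s_{\sigma_f^1}$ (resp. $W^u_{\sigma_f^2}$) tends to $\sigma_f^1$ under $f$ (resp. to $\sigma_f^2$ under $f^{-1}$), no component is compact, and projecting into the compact orbit surface $(W^s_{\sigma_f^1}\setminus\sigma_f^1)/f$, on which the image of $H_f$ is a closed one-dimensional submanifold, shows that there are only finitely many components; each component $\gamma$ is $f$-invariant with $\omega$-limit $\sigma_f^1$ and $\alpha$-limit $\sigma_f^2$, $f|_\gamma$ is a translation, and $\gamma/f\cong\mathbb S^1$. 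The structural fact that drives the construction is that the intersection of the smooth surfaces $W^s_{\sigma_f^1}$ and $W^u_{\sigma_f^2}$ is \emph{transverse} along $H_f$, so near $H_f$ this pair of surfaces is locally standard, even if their closures are wildly embedded elsewhere in $M^3$. I would then choose the linearizing homeomorphisms $\mu_1,\mu_2$ and a tube foliation $F$ simultaneously and compatibly. Near the $\sigma_f^1$-end each $\gamma$ lies in $W^s_{\sigma_f^1}=\mu_1^{-1}(\{x_1=0\})$, where $F^s_1,F^u_1$ are the smooth model foliations; near the $\sigma_f^2$-end each $\gamma$ lies in $W^u_{\sigma_f^2}=\mu_2^{-1}(\{x_3=0\})$, where $F^s_2,F^u_2$ are the smooth model foliations. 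Over a compact fundamental domain of each $\gamma$, where the ambient geometry is smooth and the two surfaces are transverse, one has adapted coordinates $(u,v,w)$ with $\gamma=\{u=v=0\}$, $W^s_{\sigma_f^1}=\{u=0\}$, $W^u_{\sigma_f^2}=\{v=0\}$, and there one builds, by a partition-of-unity interpolation, an $f$-invariant tubular neighborhood $N_{H_f}$ with a $C^{1,1}$ foliation $F$ by two-disks transverse to $H_f$, arranged so that near the $\sigma_f^1$-end the trace $F^s_{1,x}\cap F_x$ coincides with the leaf of $F^s_2$ through $x$ (the first equality in the definition of consistency) and near the $\sigma_f^2$-end the trace $F^u_{2,x}\cap F_x$ coincides with the leaf of $F^u_1$ through $x$ (the second equality); finally one spreads the whole construction from the fundamental domains over all of $H_f$ by the iterates of $f$, going back if necessary to adjust $\mu_1,\mu_2$ so that both equalities hold throughout $N_f^1\cap N_f^2\cap N_{H_f}$.

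The main obstacle is exactly this simultaneous matching: realizing the two equalities along the entire heteroclinic set, $f$-equivariantly, while keeping $F$ of class $C^{1,1}$. One cannot hope for a $C^\infty$ foliation here, because by the inclination lemma ($\lambda$-lemma) the foliation $F^u_2$ as seen inside the linearizing chart $N_f^1$ of $\sigma_f^1$ — and symmetrically $F^u_1$ inside $N_f^2$ — is controlled only to finitely many derivatives, so the interpolation over the fundamental domain must respect this limited regularity, which is precisely why the definition asks for a $C^{1,1}$ disk foliation rather than a smooth one. The possible wildness of the separatrices, by contrast, never enters: every delicate identification takes place in a neighborhood of $H_f$ that, in the orbit space, remains a definite distance from $\omega_f$ and $\alpha_f$, the only points at which wildness can occur.
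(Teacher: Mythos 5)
This statement is quoted by the paper from \cite{BoGrPo2019} (Theorem~1) and is not proved in the text, so your attempt can only be judged on its own merits; as written it has a genuine gap. Your structural analysis of $H_f$ is wrong at the decisive point: you claim that no component of $H_f$ is compact (and hence that there are finitely many components, each $f$-invariant with $\gamma/f\cong\mathbb S^1$), arguing that points of $W^s_{\sigma_f^1}$ converge to $\sigma_f^1$ under $f$. That argument is a non sequitur, because a compact heteroclinic curve need not be $f$-invariant: it is a wandering circle whose forward iterates shrink to $\sigma_f^1$ and backward iterates to $\sigma_f^2$, with no contradiction. Compact heteroclinic curves do occur for $f\in G$ --- the paper's definition of the heteroclinic index explicitly allows $H_f$ to contain no non-compact curves, lemma~\ref{dyn}(2) characterizes their projections as the curves $c\subset C_f$ with $\eta_{_f}([c])=0$, lemma~\ref{te11} is devoted to removing the inessential ones, and Fig.~\ref{hecom} shows $f\in G_0^+$ whose $H_f$ consists of an \emph{infinite} family of compact curves. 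So both your finiteness claim and your ``each component is invariant, take a compact fundamental domain of $\gamma$ and spread by iterates'' scheme fail to cover a case that the rest of the paper relies on: there the correct setup is to pick one curve from each $f$-orbit, build the transverse disk tube over it, and propagate by $f$, and the real difficulty is that the resulting infinite family of tubes accumulates on the two saddles, where compatibility with the linearizing foliations $F^{s}_i,F^{u}_i$ (the two consistency identities, which must hold on all of $N_f^1\cap N_f^2\cap N_{H_f}$, not just ``near the ends'') has to be enforced; your ``go back and adjust $\mu_1,\mu_2$ if necessary'' does not address this.

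The remainder of the sketch is reasonable in spirit: starting from the linearizing neighborhoods of \cite{BoGrLaPo2019}, handling $H_f=\emptyset$ by shrinking to disjointness (though the ``tapering profile'' argument is loose --- the clean way is to separate the disjoint compact projections of the saddle separatrices in the orbit spaces at the nodes and lift $f$-invariantly), and using transversality of $W^s_{\sigma_f^1}$ and $W^u_{\sigma_f^2}$ along $H_f$ plus an interpolation to build the $C^{1,1}$ disk foliation $F$. But until the compact-curve case (including infinitely many components) is treated and the consistency identities are established $f$-equivariantly on the whole intersection $N_f^1\cap N_f^2\cap N_{H_f}$, the proof is incomplete.
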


\subsection{Quotients}
Consider the characteristic spaces $V_{\omega_f}=W^s_{\omega_f}\setminus\omega_f$ and $\hat V_{\omega_f}=V_{\omega_f}/f$. By proposition \ref{T_2.1.3.}, $\hat V_{\omega_f}$ is diffeomorphic to the manifold $\mathbb{S}^2\times\mathbb{S}^1$. By proposition \ref{Vf}, the projection $p_{\omega_f}:V_{\omega_f}\to\hat V_{\omega_f}$ is covering which generate an epimorphism $\eta_{\omega_f}:\pi_1(V_{\omega_f})\to\mathbb Z$. Let's put (see Fig. \ref{Ho}) $$\hat A_f=p_{_{\omega_f}}(A_f).$$
\begin{figure}[h]
\center{\includegraphics
[width=0.45\linewidth]{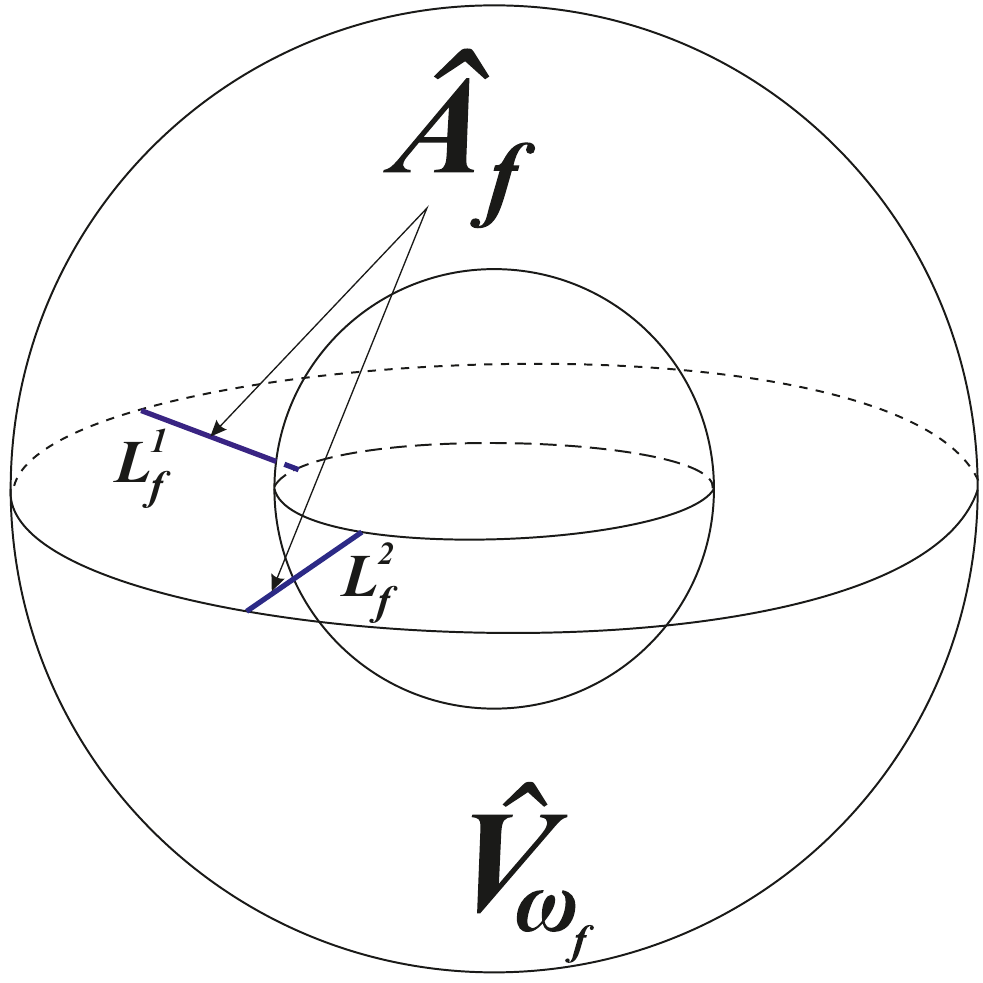}}
\caption{\small Space $\hat V_{\omega_f}$}
\label{Ho}
\end{figure}
By proposition \ref{T_2.1.3.},  $\hat A_f$ consists of a pair of disjoint knots $L^1_f\sqcup L^2_f$ such that the map $i_{L^i_f*}:\pi_1(L^i_f)\to\pi_1(\hat V_{\omega_f}),\,i\in\{1,2\}$ is an isomorphism. Moreover, $$N_{\hat A_f}=p_{_{\omega_f}}(N^1_f\cap V_{\omega_f})$$ is a disjoint union of tubular neighborhoods $N_{L^1_f},\, N_{L^1_f}$ of knots $L^1_f,L^2_f$, accordingly.

\begin{proposition}[\cite{GrMePo2016}, Lemma 4.4]\label{wild} If at least one of the sets $\hat V_{\omega_f}\setminus int\,N_{L^1_f},\,\hat V_{\omega_f}\setminus int\,N_{L^2_f}$ is not homeomorphic to a solid torus, then the manifold $W^u_{\sigma^1_f}$ is wildly embedded into the supporting manifold $M^3$.
\end{proposition}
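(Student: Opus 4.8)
The plan is to prove the contrapositive: assuming that $W^u_{\sigma^1_f}$ is tamely embedded in $M^3$, I would show that both $\hat V_{\omega_f}\setminus\mathrm{int}\,N_{L^1_f}$ and $\hat V_{\omega_f}\setminus\mathrm{int}\,N_{L^2_f}$ are solid tori. The first step is to translate the hypothesis into the language of separatrix frames. The set $A_f=\mathrm{cl}(W^u_{\sigma^1_f})$ is a topologically embedded circle which coincides with the smooth submanifold $W^u_{\sigma^1_f}$ near $\sigma^1_f$ and along each of the two unstable separatrices, so it is locally flat everywhere except possibly at $\omega_f$; since $A_f\cap W^s_{\omega_f}=L_{\omega_f}\cup\omega_f$, tameness of $W^u_{\sigma^1_f}$ means exactly that $A_f$ is locally flat at $\omega_f$. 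Using that $f|_{W^s_{\omega_f}}$ is a topological contraction and that $W^s_{\omega_f}=\bigcup_{k\geqslant 0}f^{-k}(B)$ for a small $f$-trapping ball $B\ni\omega_f$ in which $(B,A_f\cap B)$ is a standard pair, local flatness at $\omega_f$ propagates along the orbits to a homeomorphism $\psi_{\omega_f}\colon W^s_{\omega_f}\to\mathbb R^3$ with $\psi_{\omega_f}(\omega_f)=0$ and $\psi_{\omega_f}(L_{\omega_f}\cup\omega_f)=r^+\cup r^-$, where $r^+,r^-$ are two distinct closed rays issuing from the origin (distinct because they meet only at $0$), the images of the two separatrices; in other words the frame $L_{\omega_f}\cup\omega_f$ is tame in the sense of the definition of a tame separatrix frame given above.

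With $\psi_{\omega_f}$ fixed, put $g=\psi_{\omega_f}(f|_{W^s_{\omega_f}})\psi_{\omega_f}^{-1}$. Then $g$ is a homeomorphism of $\mathbb R^3$ fixing $0$, contracting topologically towards $0$, preserving each open ray $\ell^\pm=r^\pm\setminus\{0\}$ (the separatrices are $f$-invariant, as $\nu_{\sigma^1_f}=+1$), acting freely and properly discontinuously on $\mathbb R^3\setminus\{0\}$, with $(\mathbb R^3\setminus\{0\})/g\cong\hat V_{\omega_f}$ and with $\ell^+/g,\ \ell^-/g$ the two knots $L^1_f,L^2_f$. The crucial observation is that $(\mathbb R^3\setminus\{0\})\setminus\ell^+=\mathbb R^3\setminus r^+$ is contractible: $\mathbb R^3$ with one closed ray removed deformation retracts onto a plane (equivalently, $\mathbb S^3$ minus a tame arc is an open ball). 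This set is $g$-invariant and $g$ acts on it freely and properly discontinuously, so the quotient $\hat V_{\omega_f}\setminus L^1_f$ is aspherical with fundamental group $\mathbb Z$; hence, for any tubular neighbourhood, the exterior $Q:=\hat V_{\omega_f}\setminus\mathrm{int}\,N_{L^1_f}$ is a compact orientable $3$-manifold with $\partial Q$ a torus and $\pi_1(Q)\cong\mathbb Z$. Moreover $Q$ is irreducible: by Proposition~\ref{ss2s1} a $2$-sphere embedded in $Q$ either bounds a ball in $\hat V_{\omega_f}\cong\mathbb S^2\times\mathbb S^1$ or is isotopic to a fibre $\mathbb S^2\times\{s\}$; the latter is impossible, since such a sphere is disjoint from $L^1_f$ whereas $L^1_f$, representing a generator of $H_1(\hat V_{\omega_f})$, has non-zero intersection number with the fibre class, and a ball bounded by the former must miss $L^1_f$ (otherwise $L^1_f$ would be null-homotopic) and therefore lies in $Q$. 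A compact orientable irreducible $3$-manifold with torus boundary and infinite cyclic fundamental group is a solid torus, so $Q$ is a solid torus; the same argument applies to the exterior of $L^2_f$. This proves the contrapositive, and hence the statement.

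The step I expect to be the main obstacle is the globalisation inside the first paragraph: local flatness of $A_f$ at the single point $\omega_f$ only yields a straightening of $L_{\omega_f}\cup\omega_f$ in a neighbourhood of $\omega_f$, and promoting this to a homeomorphism of all of $W^s_{\omega_f}\cong\mathbb R^3$ straightening the whole non-compact frame requires an inductive construction over the exhausting balls $f^{-k}(B)$, correcting at each stage by a homeomorphism that preserves the part already made standard and passing to a limit — the usual ``infinite-swindle'' technique in taming one-dimensional separatrices, carried out in \cite{GrMeZh2003} (cf.\ also \cite{GrMePo2016}), which may be invoked. I would also remark that, tubular neighbourhoods of a knot in a $3$-manifold being unique up to ambient isotopy, the conclusion does not depend on the choice of $N_{L^1_f},N_{L^2_f}$, so there is no loss in working with the neighbourhoods supplied by the straightened model rather than with $N_{\hat A_f}=p_{\omega_f}(N^1_f\cap V_{\omega_f})$ itself.
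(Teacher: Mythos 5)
The paper itself gives no argument for this proposition --- it is imported verbatim from \cite{GrMePo2016} (Lemma 4.4) --- so your proposal can only be judged on its own terms. Its second half is essentially sound: once you have a homeomorphism of $W^s_{\omega_f}$ onto $\mathbb R^3$ carrying $\ell\cup\omega_f$ ($\ell$ an unstable separatrix of $\sigma^1_f$) to a straight ray, the covering-space computation gives $\pi_1(\hat V_{\omega_f}\setminus L^i_f)\cong\mathbb Z$, irreducibility of the exterior follows from Proposition \ref{ss2s1} (with one small repair: the ball bounded by the sphere avoids $L^i_f$ but not necessarily $N_{L^i_f}$; either shrink the tubular neighbourhood, which is unique up to ambient isotopy, or use that a compact $3$-manifold is irreducible iff its interior is, the interior of the exterior being $\hat V_{\omega_f}\setminus L^i_f$), and the classical fact that a compact orientable irreducible $3$-manifold with torus boundary and infinite cyclic fundamental group is a solid torus finishes the contrapositive. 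The genuine gap is the step you yourself flag: passing from local flatness of $cl(W^u_{\sigma^1_f})$ at the single point $\omega_f$ to a global homeomorphism $\psi_{\omega_f}$ straightening the whole frame in $W^s_{\omega_f}$. ``Propagation along orbits'' only yields a standard ball--arc pair at every scale $f^{-k}(B)$; to assemble these into a global straightening you would need the regions between consecutive balls of a nested exhaustion to be standard product tangles (two unknotted, unlinked spanning arcs in $\mathbb S^2\times[0,1]$), and that does not follow formally from local flatness at $\omega_f$: this is precisely the delicate phenomenon behind mildly wild frames (Fig. \ref{miwi}), and the taming results of \cite{GrMeZh2003}, \cite{GrMePo2016} you propose to invoke either take frame tameness as hypothesis or are essentially equivalent to the lemma being proved, so the appeal is dangerously close to circular.

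Fortunately the full straightening is not needed, and your plan can be repaired to bypass it. All the covering-space step uses is that $W^s_{\omega_f}\setminus(\ell\cup\omega_f)$ is simply connected for each separatrix $\ell$. Local flatness at $\omega_f$ provides one chart ball $B\ni\omega_f$ with $(B,A_f\cap B)$ homeomorphic to a ball with a diameter, the two separatrices meeting $B$ in the two half-diameters, so $(B,(\ell\cup\omega_f)\cap B)$ is a ball with a radius; by $f$-invariance of $\ell\cup\omega_f$ the same holds for every pair $\left(f^{-k}(B),(\ell\cup\omega_f)\cap f^{-k}(B)\right)$, and $W^s_{\omega_f}=\bigcup_{k\geqslant0}f^{-k}(int\,B)$ because forward orbits converge to $\omega_f$ uniformly on compact sets. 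Any loop in $W^s_{\omega_f}\setminus(\ell\cup\omega_f)$ therefore lies in some $f^{-k}(int\,B)\setminus(\ell\cup\omega_f)$, an open ball minus a radius, hence contracts there; so the complement is simply connected and the quotient knot complement has fundamental group $\mathbb Z$, exactly what your argument needs, with no appeal to frame tameness at all. With this substitution (and the irreducibility repair above) your contrapositive argument becomes a complete and rather clean proof of the quoted proposition.
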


By proposition \ref{AfR}, the sets $A_{f}$ and $R_{f}$ are dual attractor and repeller, respectively, for the diffeomorphism $f$. Let's put $$V_f=M^3\setminus(A_f\cup R_f).$$
By proposition \ref{Vf}, the characteristic space $\hat{V}_f=V_f/f$ is a smooth simple orientable 3-manifold, and the natural projection $p_{_f}:V_f\to\hat{V}_f$ is a cover inducing an epimorphism $$\eta_{_f}:\pi_1(\hat V_f)\to\mathbb{Z}.$$  Let's put (see Fig. \ref{1_inter}) $$T^s_f=p_{_f}(W^s_{\sigma_1}),\,T^u_f=p_{_f}(W^{u}_{\sigma_{2}}), C_f=p_{_f}(H_f).$$
\begin{figure}[h]
\center{\includegraphics
[width=0.35\linewidth]{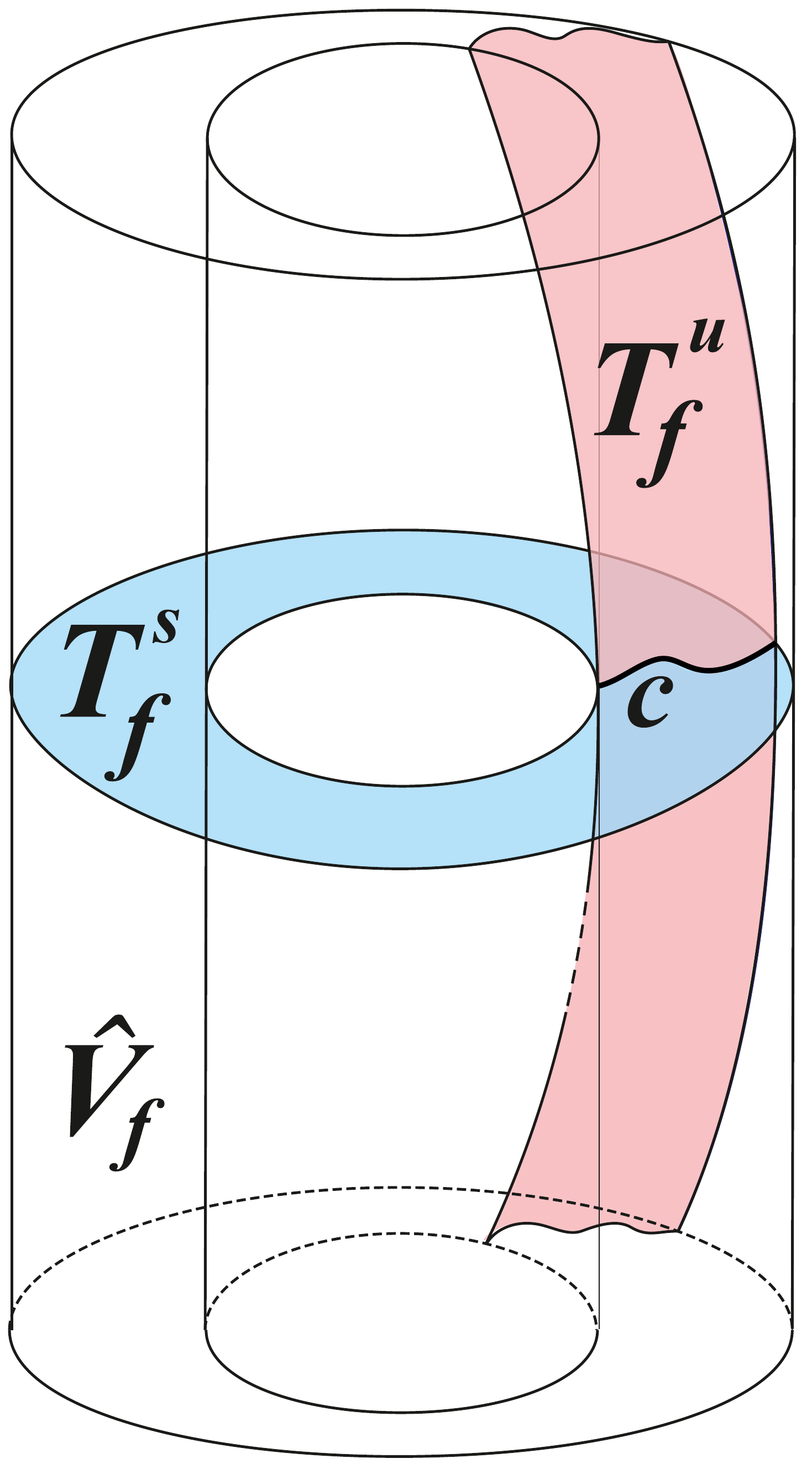}}
\caption{\small Space $\hat{V}_f$}
\label{1_inter}
\end{figure} 
By propositions \ref{T_2.1.3.} and \ref{Vf}, the sets $T^s_f,\,T^u_f$ are smoothly embedded 2-tori in $\hat V_f$ such that $\eta_{_f}(i_{T^s_f*}(\pi_1(T^s_f)))=\eta_{_f}(i_{T^u_f*}(\pi_1(T^u_f)))\cong\mathbb Z$.
Moreover $$N_{T^s_f}=p_{_{f}}(N^1_f\cap V_{f}),\,N_{T^u_f}=p_{_{f}}(N^2_f\cap V_{f})$$  are tubular neighborhoods  of the tori  $T^s_f,T^u_f$, accordingly.

\subsection{The scheme of $f\in G$}
As was mentioned above the collection $$S_f=(\hat V_f,\eta_{_f},T^s_f,T^u_f)$$ is  the scheme of $f\in G$ and, by proposition \ref{clms3}, is a complete invariant of the topological classification. 

The main result of the section is the following lemma.

\begin{lemma}[\cite{PoSh}, Lemma 2]\label{dyn} For any diffeomorphism $f\in G$, the following is true:
\begin{enumerate}
\item the manifold $\hat V_f$ is irreducible and the tori $T^s_f,\,T^u_f$ are incompressible in it;
\item the set $C_f$ consists of a finite number of smoothly embedded closed curves, while $\eta_{_f}([c])=0$ if and only if the curve $c\subset C_f$ is a projection of a compact heteroclinic curve;
\item any curve $c\subset C_f$ such that $\eta_{_f}([c])=0$ is contractible (or non-contractible) simultaneously on both tori $T^s_f,\, T^u_f$. 
\end{enumerate}
\end{lemma}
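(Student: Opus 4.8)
The plan is to derive items (2) and (3) from item (1) together with elementary covering‑space arguments for $p_{_f}\colon V_f\to\hat V_f$, and to prove item (1) by first showing that the tori $T^s_f,T^u_f$ are non‑separating in $\hat V_f$ and then using that $\hat V_f$ is simple. For the non‑separating property I would first establish, from $M^3=W^s_{\omega_f}\sqcup W^s_{\sigma_f^1}\sqcup W^s_{\sigma_f^2}\sqcup\{\alpha_f\}$, $A_f=\{\omega_f\}\cup W^u_{\sigma_f^1}$, $R_f=\{\alpha_f\}\cup W^s_{\sigma_f^2}$ and the inclusion $W^u_{\sigma_f^1}\setminus\{\sigma_f^1\}\subset W^s_{\omega_f}$ (both unstable separatrices of $\sigma_f^1$ end at the only sink), the identity $V_f\setminus W^s_{\sigma_f^1}=W^s_{\omega_f}\setminus(cl(W^u_{\sigma_f^1})\cap W^s_{\omega_f})$. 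Since $W^s_{\omega_f}\cong\mathbb R^3$ and $cl(W^u_{\sigma_f^1})\cap W^s_{\omega_f}$ is one‑dimensional, Proposition~\ref{sl_1} shows that $V_f\setminus W^s_{\sigma_f^1}$ is connected; as $W^s_{\sigma_f^1}\cap V_f$ is an orientable surface in the orientable manifold $V_f$ it is two‑sided, so joining its two local sides near a point $q\ne\sigma_f^1$ by a path in $V_f$ and closing this path through $q$ gives a loop in $V_f$ meeting $W^s_{\sigma_f^1}$ transversally in the single point $q$. Applying $p_{_f}$, which is a local diffeomorphism with $p_{_f}^{-1}(T^s_f)=W^s_{\sigma_f^1}\cap V_f$, one gets a loop in $\hat V_f$ meeting $T^s_f$ transversally in one point, whence $[T^s_f]\ne 0$ in $H_2(\hat V_f;\mathbb Z/2)$; the argument for $T^u_f$ is symmetric.

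For item (1): by Proposition~\ref{Vf} the connected manifold $\hat V_f$ is simple, hence either irreducible or homeomorphic to $\mathbb S^2\times\mathbb S^1$. In the latter case $T^s_f$ is a smoothly embedded torus with $i_{T^s_f*}(\pi_1(T^s_f))\ne 0$ (it is $\eta_{_f}$-essential), so by Proposition~\ref{f_1} it would bound a solid torus and hence separate $\hat V_f$, contradicting the previous paragraph; therefore $\hat V_f$ is irreducible. If $T^s_f$ were compressible then, being a two‑sided torus in an irreducible manifold, it would by Proposition~\ref{ex_6} bound a solid torus or lie inside a $3$‑ball, and in either case be null‑homologous and hence separating, which is again impossible. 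Thus $T^s_f$, and symmetrically $T^u_f$, is incompressible; together with the irreducibility of $\hat V_f$ this is item (1). Moreover, by the loop theorem the incompressibility of these two‑sided tori makes the inclusions $T^s_f\hookrightarrow\hat V_f$ and $T^u_f\hookrightarrow\hat V_f$ injective on fundamental groups.

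For item (2): the set $C_f=p_{_f}(H_f)$ coincides with the transversal intersection $T^s_f\cap T^u_f$ (a common point lifts to $a\in W^s_{\sigma_f^1}$ and $b=f^k(a)\in W^u_{\sigma_f^2}$, so that $b\in H_f$), hence $C_f$ is a closed $1$-submanifold of the closed manifold $\hat V_f$, i.e. a finite disjoint union of smoothly embedded circles. Fix a component $c\subset C_f$ and a connected component $\gamma$ of $p_{_f}^{-1}(c)\subset H_f$; then $p_{_f}^{-1}(c)$ is the $f$-orbit of $\gamma$, so all its components are homeomorphic to $\gamma$. If $\gamma$ is compact, each $f^k(\gamma)$ is a component of the $1$-manifold $H_f$ and hence equals $\gamma$ or is disjoint from it, while equality for $k\ne 0$ is impossible because an $f^k$-invariant compact subset of $W^s_{\sigma_f^1}$ would have to contain the forward limit $\sigma_f^1$, and $\sigma_f^1\notin\gamma$; consequently $p_{_f}$ is injective on $\gamma$, the map $p_{_f}|_\gamma\colon\gamma\to c$ is a homeomorphism, the loop $c$ lifts to a closed loop, and $\eta_{_f}([c])=0$. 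If $\gamma$ is non‑compact it is a properly embedded line with $f(\gamma)=\gamma$, and $f|_\gamma$ is fixed‑point‑free, hence orientation‑preserving, hence conjugate to a unit translation of $\mathbb R$, so that $p_{_f}|_\gamma\colon\mathbb R\to c$ is the universal covering and $\eta_{_f}([c])=\pm 1\ne 0$. Since every component of $C_f$ is of exactly one of these two types, this establishes the equivalence in (2).

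Finally, item (3) is immediate from item (1): for a component $c\subset C_f$, contractibility of $c$ on $T^s_f$ is equivalent to $i_{T^s_f*}([c])=0$ in $\pi_1(\hat V_f)$ by $\pi_1$-injectivity, and contractibility of $c$ on $T^u_f$ is equivalent to $i_{T^u_f*}([c])=0$ in $\pi_1(\hat V_f)$; as $c$ is a single loop of $\hat V_f$, both amount to $[c]=0$ in $\pi_1(\hat V_f)$, so $c$ is contractible, or not contractible, on the two tori simultaneously. I expect the main obstacle to be the non‑separating property of the first paragraph: once it is in hand, the rest reduces to the cited Propositions~\ref{f_1} and~\ref{ex_6} and to routine covering‑space and $3$-manifold topology. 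The delicate points there are the identity $V_f\setminus W^s_{\sigma_f^1}=W^s_{\omega_f}\setminus(cl(W^u_{\sigma_f^1})\cap W^s_{\omega_f})$ and the observation that, $V_f\setminus W^s_{\sigma_f^1}$ being connected, the dual loop can be realized inside $V_f$ with exactly one transversal intersection with $W^s_{\sigma_f^1}$.
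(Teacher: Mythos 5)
Your proposal is correct and follows essentially the same route as the paper: the heart of the matter is the identity $V_f\setminus W^s_{\sigma_f^1}=W^s_{\omega_f}\setminus A_f$ combined with Proposition \ref{sl_1} to show that $T^s_f$ (and symmetrically $T^u_f$) does not separate $\hat V_f$, after which simplicity of $\hat V_f$, Proposition \ref{f_1} (to exclude $\hat V_f\cong\mathbb S^2\times\mathbb S^1$) and Proposition \ref{ex_6} yield irreducibility and incompressibility exactly as in the paper's proof. The only deviations are minor: you certify non-separation via a dual loop and $\mathbb Z/2$-homology instead of simply projecting the connected set $V_f\setminus W^s_{\sigma_f^1}$ to $\hat V_f\setminus T^s_f$, you spell out the covering-space dichotomy behind item (2) which the paper treats as immediate from the definition of $\eta_{_f}$ (your extra claim $f(\gamma)=\gamma$, hence $\eta_{_f}([c])=\pm1$, is not needed -- $f^{k}(\gamma)=\gamma$ for some $k\neq0$ already gives $\eta_{_f}([c])\neq0$), and for item (3) you invoke $\pi_1$-injectivity of incompressible two-sided tori via the loop theorem where the paper obtains a compressing disk directly from a curve contractible on one torus and essential on the other.
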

\begin{proof} Let us prove successively all the statements of the lemma.

1. By virtue of the sentence \ref{AfR}, the manifold $\hat V_f$ is simple. Since the torus $T^s_f$ is $\eta_{_f}$-essential in $\hat V_f$, then it does not lie in a 3-ball. Let's show from the opposite that the torus $T^s_f$ does not bound the solid torus in $\hat V_f$. It follows from proposition \ref{f_1}, that the manifold $\hat V_f$ is not homeomorphic to $\mathbb{S}^2\times\mathbb{S}^1$ and, therefore, is irreducible. Then, by proposition \ref{ex_6}, the torus $T^s_f$ is incompressible in $\hat V_f$. 

If we assume that $\hat V_f\cong\mathbb{S}^2\times\mathbb{S}^1$, then the torus $T^s_f$ bounds a solid torus there by proposition  \ref{f_1}, and therefore $\hat V_f\setminus T^s_f$ consists of two connected components. On the other hand, by proposition  \ref{th_2.1.1.}, $$M^3=W^s_{\omega_f}\cup W^{s}_{\sigma_f^{1}}\cup W^{s}_{\sigma_f^{2}}\cup W^{s}_{\alpha_f}.$$ Then $V_f\setminus W^s_{\sigma_f^1}=W^s_{\omega_f}\setminus A_f$ and, hence, the manifolds $\hat V_f\setminus T^s_f$ and $\hat V_{\omega_f}\setminus\hat A_f$ are homeomorphic. 
Since a one-dimensional submanifold does not divide a manifold of dimension three (see  proposition \ref{sl_1}), the set $\hat V_{\omega_f}\setminus\hat A_{f}$ is connected (see Fig. \ref{Ho}). We got a contradiction with the fact that a connected manifold is homeomorphic to an non-connected one.

2. It follows directly from the definition of the epimorphism $\eta_{_f}$ that $\eta_{_f}([c])=0$ if and only if $c$ is a projection of a compact curve.   

3. Suppose that some curve $c\subset C_f$ is contractible on the torus $T^u_f$ and essential on the torus $T^s_f$. Then, by definition, the torus $T^s_f$ is compressible in $\hat V_f$, which contradicts the proven point 1.  
\end{proof}

Denote by $C^0_f$ a subset of $C_f$ consisting of contractible on $T^u_f$ curves. Let's call the heteroclinic curves from the set $H^0_f=p_f^{-1}(C^0_f)$ {\it inessential}, the remaining heteroclinic curves will be called {\it essential}.
 
\section{Trivialization of the dynamics of diffeomorphisms from $G$}\label{sec-tri}
Recall that for any diffeomorphism $f\in G$, we introduced the concept of a heteroclinic index $I_f$ of $f$ and for an integer $p\geqslant 0$ we denote by $G_p\subset G$ a subset of diffeomorphisms $f\in G$ such that $I_{f}=p$. Note that any essential compact heteroclinic curve $\gamma\subset H_f$ bounds a disk $d_\gamma\subset W^s_{\sigma_f^1}$ containing the saddle $\sigma_f^1$. We will consider any such curve oriented so that when moving along it, the disk $d_\gamma$ remains on the left.
Then for the curve $\gamma$, similarly to a non-compact curve, an orientation $v_\gamma$ is determined.  

The set $H_f$ is called {\it orientable} if it consists only of essential heteroclinic curves with the same orientation. Otherwise, we will call the set $H_f$ {\it not orientable} (see Fig. \ref{neor}). Denote by $G^+_p\subset G_p,\,p\geqslant 0$ a subset of diffeomorphisms $f\in G_p$ with an orientable set $H_f$.
Thus, for any diffeomorphism $f\in G^+_p$, the set $H_f$ is either empty, or consists either only of non-compact or only of compact heteroclinic curves (see Fig. \ref{hecom}).   
\begin{figure}[h]
\center{\includegraphics[width=0.5\linewidth]{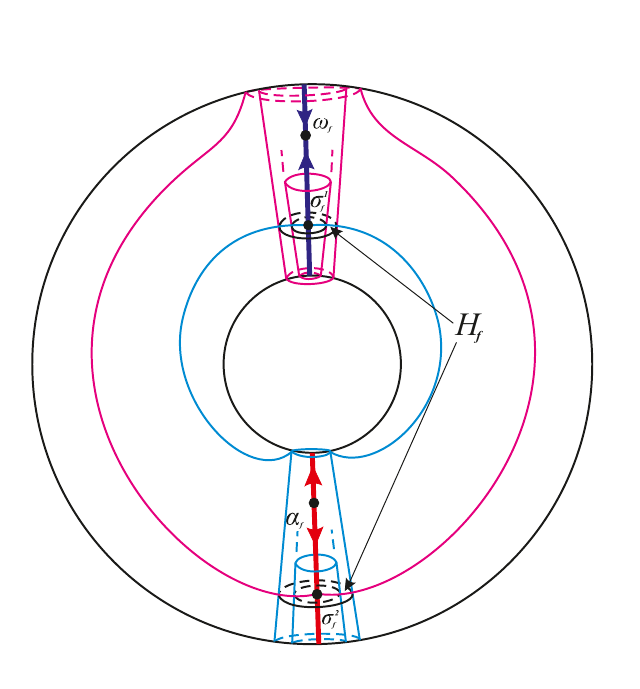}}
\caption{\small Diffeomorphism $f\in G_0^+$ with an orientable set $H_f$ consisting of an infinite set of compact curves}
\label{hecom}
\end{figure}

The main result of this chapter is the proof of the following theorem.   
\begin{theorem}\label{te1}
For any diffeomorphism $f:M^3\to M^3$ from the class $G_p,\,p\geqslant 0$ in the set $Diff(M^3)$ there is an arc connecting the diffeomorphism $f$ with some diffeomorphism $f_+\in G_p^+$.  
\end{theorem}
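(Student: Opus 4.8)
\emph{The plan.} The idea is to work in the characteristic space $\hat V_f$ and to bring the torus $T^u_f=p_f(W^u_{\sigma_f^2})$ into ``standard position'' relative to the fixed torus $T^s_f=p_f(W^s_{\sigma_f^1})$ by an ambient isotopy of $\hat V_f$, and then to convert that ambient isotopy into the required arc by Proposition~\ref{IsotBan}. Recall that, for $A=A_f$, the manifold $\hat V_A$ is naturally identified with $\hat V_f$, with $\hat l^u_f$ corresponding to $T^u_f$, $\hat l^s_f$ to $T^s_f$, and $\hat l^u_f\cap\hat l^s_f$ to $C_f=T^s_f\cap T^u_f$. By Proposition~\ref{IsotBan}, the time-one map of any ambient isotopy $\hat h$ of $\hat V_f$ is realized by a smooth arc in $E_f\subset Diff(M^3)$ that starts at $f$, fixes $T^s_f$, and carries $T^u_f$ to $\hat h(T^u_f)$; since the attractor, the repeller and the non-wandering set do not change along such an arc, finitely many of them may be concatenated. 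Hence it suffices to produce a finite chain of ambient isotopies of $\hat V_f$ after which $T^u_f\cap T^s_f$ consists only of \emph{essential} circles all carrying the same orientation sign, provided each step leaves $I_f$ unchanged: the terminal diffeomorphism $f_+\in E_f$ is then orientation preserving, has four fixed points of Morse indices $0,1,2,3$, and is Morse--Smale (only $W^u_{\sigma_f^2}$ has been moved and it may be kept transverse to $W^s_{\sigma_f^1}$), so $f_+\in G$; its heteroclinic set is orientable and $I_{f_+}=p$, i.e.\ $f_+\in G^+_p$.

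\emph{Step 1: removing the inessential curves.} By Lemma~\ref{dyn}, $\hat V_f$ is irreducible, the tori $T^s_f,T^u_f$ are incompressible, and $C_f$ has finitely many components. I would run the standard innermost-disk procedure: an inessential curve $c\subset C_f$ is contractible on $T^u_f$, hence, by Lemma~\ref{dyn}(3), also on $T^s_f$, so $c$ bounds disks $D^u\subset T^u_f$ and $D^s\subset T^s_f$; choosing $c$ innermost on $T^u_f$ gives $D^u\cap T^s_f=c$, so $D^u\cup D^s$ is an embedded sphere, it bounds a ball by irreducibility, and the ambient isotopy of $\hat V_f$ pushing $T^u_f$ across that ball reduces the number of components of $C_f$ and creates no new intersection. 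Since the curves being removed are compact heteroclinic ones, $I_f$ is untouched. When this stops, every component of $C_f$ is essential, i.e.\ non-contractible on both tori; as disjoint non-contractible simple closed curves on a torus are mutually parallel, $C_f$ is now a family of parallel circles of one primitive class on each of $T^s_f$ and $T^u_f$, all lying in $\ker\eta_f$ or all outside it. Thus $H_f$ has already become ``of compact type'' (only essential compact curves, possibly infinitely many) or ``of non-compact type'', in agreement with the description of $G^+_p$.

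\emph{Step 2: making the orientations coherent.} Cut $\hat V_f$ along $T^s_f$; the torus $T^u_f$ becomes a disjoint union of annuli properly embedded in the cut manifold, with boundary on the copies of $T^s_f$. As long as one of these annuli $B$ has both boundary circles on the same copy of $T^s_f$, I choose such a $B$ whose two boundary circles cobound, on that copy, a sub-annulus $A$ with no further curve of $C_f$ inside; then $A\cup B$ is a compressible torus in $\hat V_f$, so by irreducibility and Proposition~\ref{ex_6} it bounds a solid torus, and the ambient isotopy of $\hat V_f$ sweeping $T^u_f$ across that solid torus deletes the pair $\partial B$ from $C_f$ and creates no new intersection; one checks that this pair carries opposite orientation signs (and equal multiplicities), so $I_f$ is again unchanged. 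Iterating, I reach a configuration in which every annulus of $T^u_f\setminus C_f$ runs between the two distinct copies of $T^s_f$; then all surviving curves of $C_f$ carry the same sign, and by the constancy of $I_f$ their number is exactly $p$ (so in the compact-type case $p=0$ and one is left with equally oriented curves, or with none). This gives $f_+\in G^+_p$.

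\emph{Main obstacle.} The hard part will be Step~2: showing that a ``return'' annulus $B$ must exist whenever the intersection circles are not already coherently oriented, that the sub-annulus $A$ can be taken innermost, and --- above all --- that the torus $A\cup B$ is compressible and bounds a solid torus which $T^u_f$ can be swept across without recreating trivial intersections. This amounts to a careful, $f$-equivariant version of the standard procedure for putting two incompressible surfaces of an irreducible $3$-manifold into minimal position; the invariant foliations $F^s_i,F^u_i$ of the consistent neighbourhoods system are the natural tool for carrying out the sweep-outs in both steps explicitly and equivariantly, so that every elementary move descends to a genuine ambient isotopy of $\hat V_f$ and Proposition~\ref{IsotBan} applies at each stage.
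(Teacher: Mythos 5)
Your Step 1 is essentially the paper's own argument for killing the inessential curves (innermost curve of $C^0_f$, disk--swap across the ball bounded by $d^s_c\cup d^u_c$ in the irreducible manifold $\hat V_f$, then Proposition~\ref{IsotBan}), so that part is fine. The genuine gap is exactly where you place your ``main obstacle'': in Step 2 you assert that the torus $A\cup B$ (return annulus of $T^u_f$ capped with an innermost sub-annulus of $T^s_f$) is \emph{compressible} in $\hat V_f$ and hence, by irreducibility and Proposition~\ref{ex_6}, bounds a solid torus. Neither claim is justified. After Step 1 every component of $C_f$ is essential on both tori, and in the non-compact case $\eta_f$-nontrivial, so $A\cup B$ is an $\eta_f$-essential torus in a manifold whose topology is precisely the unknown of the whole problem; there is no reason for it to be compressible, and if it is incompressible nothing in your toolkit makes it bound a solid torus (incompressible tori in irreducible $3$-manifolds generally do not). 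Even granting compressibility, Proposition~\ref{ex_6} leaves the alternative that the torus lies in a $3$-ball, which would have to be excluded. The same unproved ``one checks'' covers the sign bookkeeping, but the solid-torus step is the one that cannot be waved through, and the proposal does not close it.

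The paper closes it by changing the ambient space: the orientation-coherence argument is run not in $\hat V_f$ but in the sink orbit space $\hat V_{\omega_f}\cong\mathbb S^2\times\mathbb S^1$. There the projection $Y_f=p_{\omega_f}(W^u_{\sigma_f^2}\cap V_{\omega_f})$, cut along the tubular neighborhoods $N_{L^1_f}\sqcup N_{L^2_f}$ of the two knots $\hat A_f$, is a finite family of annuli with boundary on $T^1_f\sqcup T^2_f$; non-orientability forces a return annulus $K^u$ with both boundary circles on one torus, and capping it with a piece $K^s$ of that torus gives an $\eta_{\omega_f}$-essential torus $T_{K^u}$. The point is that in $\mathbb S^2\times\mathbb S^1$ Proposition~\ref{f_1} guarantees outright that every such essential torus bounds a solid torus; Proposition~\ref{ex_6} is used only to select an innermost one among the resulting solid tori, and the corresponding annulus exchange is then transported back into $\hat V_f$ through $p_f\circ p_{\omega_f}^{-1}$, where Proposition~\ref{IsotBan} produces the arc and removes two curves of opposite sign. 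So the known topology of $\hat V_{\omega_f}$ is what substitutes for the compressibility you would need in $\hat V_f$; without that (or an equivalent equivariant minimal-position argument you only sketch), Step 2 of your proposal does not go through.
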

The proof of the theorem will directly follow from the lemmas \ref{te11}, \ref{te12}, proved below.

\subsection{Disappearance of inessential heteroclinic curves}
Denote by $\tilde G_p\subset G_p$ a subclass of diffeomorphisms $f$ for which the set $H^0_f$ is empty. 

The main result of this section is the proof of the following fact.  

\begin{lemma}\label{te11} For any diffeomorphism ${f}:M^3\to M^3$ from the class ${G}_p$ there exists an arc in the set $Diff(M^3)$ connecting the diffeomorphism $f$ with some diffeomorphism $\tilde f\in\tilde G_p$.
\end{lemma}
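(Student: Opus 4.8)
The plan is to eliminate inessential heteroclinic curves one ``innermost'' orbit at a time, using the characteristic space picture from the preceding sections. Recall that $C^0_f\subset C_f$ consists of the curves contractible on $T^u_f$ (equivalently, by part 3 of Lemma \ref{dyn}, contractible on $T^s_f$ as well), and $H^0_f=p_f^{-1}(C^0_f)$. The first step is to set up the right notion of innermost: each curve $c\subset C^0_f$ bounds a $2$-disk on $T^u_f$ and a $2$-disk on $T^s_f$; choosing $c$ so that one of these disks contains no other curve of $C_f$ in its interior gives an innermost inessential heteroclinic curve $\gamma=p_f^{-1}(c)$ (more precisely one $f$-orbit of such a curve). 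Since $C_f$ has finitely many components by part 2 of Lemma \ref{dyn}, such an innermost $c$ exists whenever $C^0_f\neq\emptyset$, and it suffices to describe one step that strictly decreases the number of components of $C^0_f$ while staying in $G_p$ and connecting to $f$ by an arc in $\mathrm{Diff}(M^3)$.

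Next I would realize the cancellation of $\gamma$ by a local surgery supported near $\gamma$, carried out through an isotopy of the quotient manifold $\hat V_f$ and then lifted via Proposition \ref{IsotBan}. The innermost disk $\hat d^u\subset T^u_f$ bounded by $c$, together with the innermost disk $\hat d^s\subset T^s_f$ bounded by $c$ (both exist by the choice of $c$ and by part 3 of Lemma \ref{dyn}), co-bound an embedded $3$-ball $\hat B\subset\hat V_f$: indeed $\hat d^u\cup\hat d^s$ is an embedded $2$-sphere in the irreducible manifold $\hat V_f$ (Lemma \ref{dyn}, part 1), hence it bounds a ball. Moreover this sphere is $\eta_{_f}$-inessential because $c$ is contractible on both tori and $\eta_{_f}([c])=0$. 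Inside $\hat B$ one isotopes $\hat d^u$ across the ball to push it off $\hat d^s$, which removes the intersection curve $c$; this corresponds on the level of the consistent neighbourhoods system (Section 3.1) to sliding the leaf of $F^u_2$ carrying $\gamma$ off the leaf of $F^s_1$, an operation supported in $N_{H_f}$ near the orbit of $\gamma$. By Proposition \ref{IsotBan} applied with the ambient isotopy $\hat h$ of $\hat V_f$ that effects this push (it is isotopic to the identity since it is supported in a ball), we obtain a smooth arc $\varphi_t\subset E_f$ with $\varphi_0=f$ and $\varphi_1=f_1$ such that the new heteroclinic set has exactly the curve $c$ (and its orbit) removed and nothing else changed; in particular $C^0_{f_1}$ has one fewer component and $C_{f_1}\setminus C^0_{f_1}=C_f\setminus C^0_f$, so the heteroclinic index is unchanged and $f_1\in G_p$.

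Then I would iterate: after finitely many steps all inessential heteroclinic orbits are gone, producing $\tilde f\in\tilde G_p$ connected to $f$ by the concatenation of the arcs. Care is needed to check that each step does not create new inessential curves — this follows because the surgery is supported in a ball disjoint from all other heteroclinic curves by the innermost choice, so the complement of $\hat B$ in $\hat V_f$, and the rest of the heteroclinic picture in it, is untouched. One also checks that $f_1$ remains in $G$ (four fixed points, pairwise distinct Morse indices): the arc lies in $E_f$, along which the nonwandering set is constant, so this is automatic.

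The main obstacle I anticipate is the passage from ``$\hat d^u\cup\hat d^s$ bounds a ball $\hat B$'' to ``there is an ambient isotopy of $\hat V_f$, isotopic to the identity and respecting the structure needed by Proposition \ref{IsotBan}, that removes exactly the curve $c$'': one must ensure the isotopy can be taken to fix $T^s_f$ (as required by Proposition \ref{IsotBan}, which only allows moving $\hat l^u$) and to be compatible with the foliated/linearized structure of the consistent neighbourhoods system so that the lifted arc really stays in $E_f$ and the resulting map is again Morse--Smale with the same nonwandering set. Handling the case where the innermost disk on $T^u_f$ is \emph{not} simultaneously innermost on $T^s_f$ — so that one must first reduce, again by innermost-disk surgery inside balls, the curves of $C_f$ that lie between $c$ on one torus and $c$ on the other — is the technical heart; here part 3 of Lemma \ref{dyn} (a curve is contractible on $T^u_f$ iff on $T^s_f$) is what keeps the bookkeeping consistent and guarantees the process terminates.
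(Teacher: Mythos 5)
Your proposal is correct and follows essentially the same route as the paper: pick an innermost curve $c\subset C^0_f$, note that the two disks it bounds on $T^s_f$ and $T^u_f$ form a sphere bounding a ball in the irreducible manifold $\hat V_f$ (Lemma \ref{dyn}), push $T^u_f$ across that ball by an isotopy of $\hat V_f$, lift it to an arc in $E_f$ via Proposition \ref{IsotBan}, and iterate. The difficulty you flag at the end is not actually needed: it suffices to take $c$ innermost with respect to the disk $d^s_c\subset T^s_f$ only (then $d^s_c\cap d^u_c=c$ automatically, since any curve of $C_f$ inside $d^s_c$ would be inessential), and the push across the ball removes $c$ together with all inessential curves lying in $int\,d^u_c$, leaving the essential curves, and hence the index $p$, untouched while strictly decreasing the number of inessential ones.
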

\begin{proof} Let $f\in G_p$.  If $H^0_f=\emptyset$, then the lemma is proved. 
\begin{figure}[h]
\center{\includegraphics
[width=0.35\linewidth]{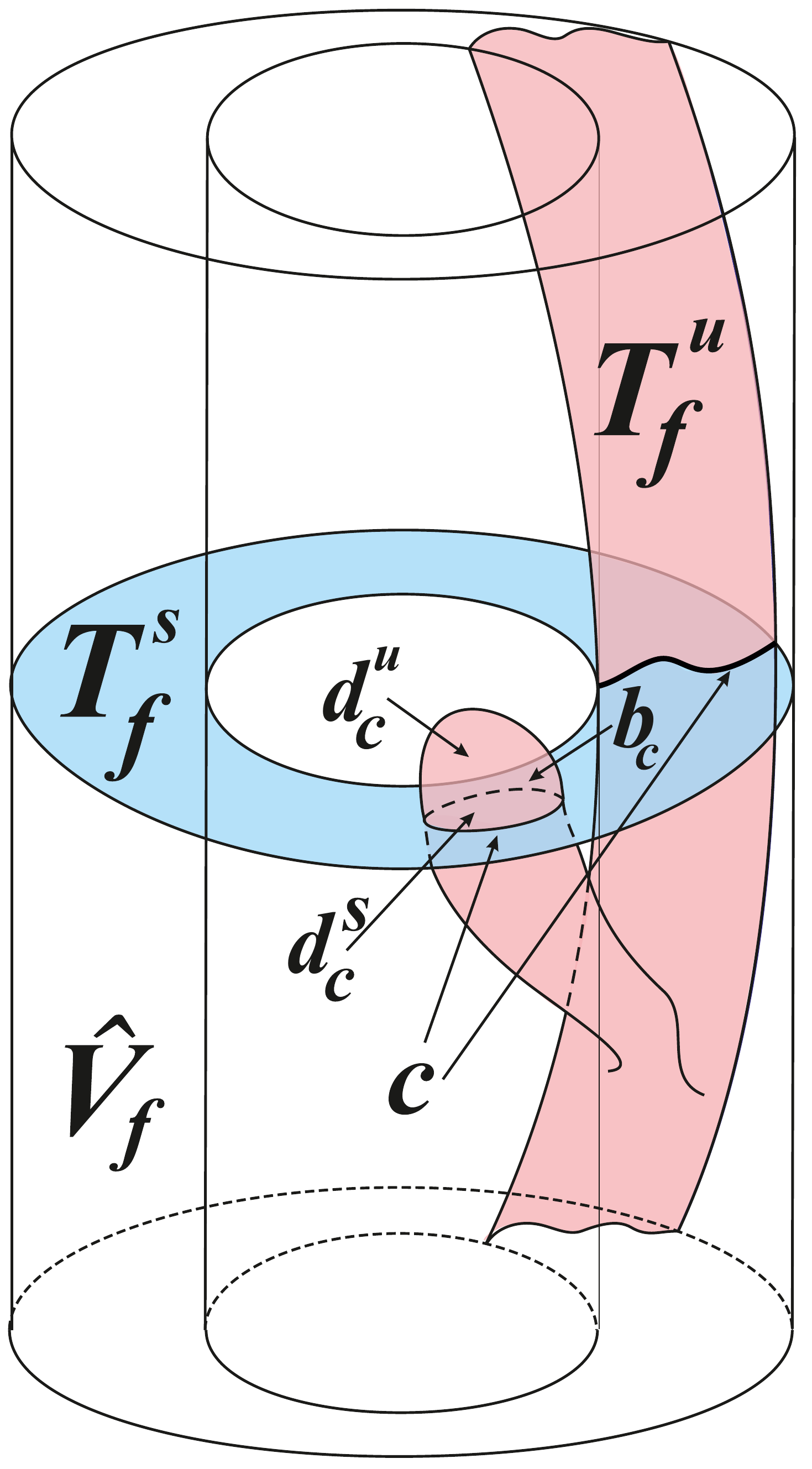}}
\caption{\small Construction of the 3-ball $b_c$}
\label{1-inter}
\end{figure}
Otherwise, by  lemma \ref{dyn}, for any curve $c\subset C^0_f$ there exists a unique disk $d^{s}_{c}$ such that $d^{s}_{c}\subset T^{s}_{f}$, $c=\partial d^{s}_{c}$ and a similar disk $d^{u}_{c}\subset T^{u}_{f}$ curve $c=\partial d^{u}_{c}$ (see Fig. \ref{1-inter}). 

Among the curves of the set $C^0_f$, we choose an {\it innermost curve} $c$, that is, such that $int\,d^s_c\cap C^0_f=\emptyset$. Since $d^s_c\cap d^u_c=c$, the set $d_c^s\cup d_c^u$ is a two-dimensional sphere cylindrical embedded into the manifold $\hat V_f$. By lemma \ref{dyn}, the manifold $\hat V_f$ is irreducible and, therefore, this sphere bounds a three-dimensional ball $b_c$  there. Denote by $T^u_{f,c}$ a two-dimensional torus obtained by smoothing the torus $(T^u_f\setminus d^u_c)\cup d^s_c$. 
Then there is an isotopic to identity diffeomorphism $\hat h:\hat V_f\to\hat V_f$ such that $\hat h(T^u_f)=T^u_{f, C^0_f}$. Then by proposition \ref{IsotBan}  there is an arc ${\zeta}_{t}\subset E_f$ such that ${\zeta}_{0}=f$ and $T^u_{\zeta_1}=T^u_{f, c}$, $T^s_{\zeta_1}=T^s_f$. 

Repeating this process for each innermost curve, we get a required  diffeomorphism $\tilde f\in\tilde G_p$. 
\end{proof}

\subsection{Disappearance of non-orientable heteroclinic curves}
The main result of this section is the proof of the following fact.  

\begin{lemma}\label{te12} For any diffeomorphism ${f}:M^3\to M^3$ from the class $\tilde{G}_p$ there exists an arc in the set $Diff(M^3)$ connecting the diffeomorphism $f$ with some diffeomorphism $f_+\in G^+_p$.
\end{lemma}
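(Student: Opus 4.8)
\textbf{Proof plan for Lemma \ref{te12}.}

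The plan is to start from a diffeomorphism $f\in\tilde G_p$, so the heteroclinic set $H_f$ contains no inessential curves; every heteroclinic curve is essential, and the compact ones bound disks through $\sigma_f^1$ on $W^s_{\sigma_f^1}$. If $H_f$ is orientable we are done, so assume it is not: there are two heteroclinic curves $\gamma_1,\gamma_2$ of opposite orientation. The idea is to show that a pair of oppositely oriented essential heteroclinic curves can be eliminated by an isotopy of the dynamics, and to iterate. First I would pass to the characteristic space $\hat V_f$ and the curves $C_f=p_f(H_f)$, where by Lemma \ref{dyn} each curve $c\subset C_f$ sits on both incompressible tori $T^s_f,T^u_f$, being contractible on both or essential on both; the orientation sign $I_\gamma$ is recorded by how the local leaves $F^s_{1},F^u_{2}$ meet along $\gamma$, equivalently by the sign of the intersection number of $T^s_f$ and $T^u_f$ along the image curve in $\hat V_f$. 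Two oppositely oriented curves $\gamma_1,\gamma_2$ that are adjacent (no other heteroclinic curve between their images on, say, $T^u_f$) bound an annulus $\mathcal A^u\subset T^u_f$ with $\partial\mathcal A^u=c_1\sqcup c_2$, and likewise an annulus $\mathcal A^s\subset T^s_f$.

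Next I would build a surgery analogous to the one in Lemma \ref{te11}. In the non-compact (essential) case, $\eta_{_f}(c_i)\neq 0$, so the curves $c_i$ are $\eta_f$-essential and each is isotopic in $\hat V_f$ to the core of $N_{T^u_f}$; the annuli $\mathcal A^s,\mathcal A^u$ glue along $c_1\sqcup c_2$ to a closed surface $\mathcal A^s\cup\mathcal A^u$ which is either a torus or a Klein bottle cylindrically (in the appropriate sense) embedded in $\hat V_f$. Because the two boundary curves carry opposite orientation signs, the two annuli match up to bound a region: I would argue, using irreducibility of $\hat V_f$ together with Proposition \ref{ex_6} (a compressible two-sided torus in an irreducible manifold bounds a solid torus or lies in a ball) or Proposition \ref{f_1}, that $\mathcal A^s\cup\mathcal A^u$ bounds a solid torus (or a ball, in the compact case, just as in Lemma \ref{te11}). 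Replacing the sub-annulus $\mathcal A^u$ of $T^u_f$ by $\mathcal A^s$ and smoothing yields a new torus $T^u_{f,*}$ with two fewer intersection curves with $T^s_f$, and this replacement is realized by an ambient isotopy $\hat h$ of $\hat V_f$ isotopic to the identity. Then Proposition \ref{IsotBan} converts $\hat h$ into an arc $\zeta_t\subset E_f$ with $\zeta_0=f$, $T^u_{\zeta_1}=T^u_{f,*}$, $T^s_{\zeta_1}=T^s_f$, i.e.\ a path in $\mathrm{Diff}(M^3)$ that removes the pair $\gamma_1,\gamma_2$ while keeping $\zeta_1\in\tilde G_p$ (no inessential curves are created) and preserving $I_{\zeta_1}=p$ (cancelling a $+1$ with a $-1$ does not change $\left|\sum I_\gamma\right|$).

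Then I would iterate: each step strictly decreases the (finite, by Lemma \ref{dyn}) number of heteroclinic curves by two, so after finitely many steps we reach a diffeomorphism $f_+$ all of whose heteroclinic curves have the same orientation sign, i.e.\ $H_{f_+}$ is orientable and $f_+\in G^+_p$; concatenating the finitely many arcs gives the desired arc from $f$ to $f_+$. \emph{The main obstacle} I expect is the surgery/innermost-curve bookkeeping: one must choose $\gamma_1,\gamma_2$ so that the replacement annulus $\mathcal A^s$ genuinely can be pushed across $T^u_f$ without introducing new transverse intersections with $T^s_f$ or destroying $\eta_f$-essentiality — in the compact case this is the innermost-disk argument of Lemma \ref{te11}, but in the non-compact case one works with innermost annuli between consecutive essential curves and must verify that the torus $\mathcal A^s\cup\mathcal A^u$ is genuinely compressible (so that Propositions \ref{ex_6}, \ref{f_1} apply) and that the solid torus it bounds meets $T^s_f$ in exactly the annulus $\mathcal A^s$; checking that the opposite-sign condition is exactly what makes the two annuli cobound such a solid torus, and that the resulting isotopy class is trivial, is the delicate point.
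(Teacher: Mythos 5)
Your overall strategy (cancel a pair of oppositely oriented heteroclinic curves by swapping an annulus of $T^u_f$ across a region it cobounds with an annulus of $T^s_f$, realize the swap by an isotopy via Proposition \ref{IsotBan}, and iterate) is the same as the paper's, but the step you yourself flag as ``the delicate point'' is exactly the step that carries the whole proof, and as you set it up it does not go through. Working directly in $\hat V_f$, you want the closed surface $\mathcal A^s\cup\mathcal A^u$ to bound a solid torus whose interior avoids $T^s_f\cup T^u_f$. Neither tool you cite delivers this: Proposition \ref{f_1} is a statement about tori in $\mathbb S^2\times\mathbb S^1$, and $\hat V_f$ is \emph{not} $\mathbb S^2\times\mathbb S^1$ (that is ruled out in the proof of Lemma \ref{dyn}); Proposition \ref{ex_6} requires the torus to be compressible, which you do not establish and which is not automatic -- a priori $\mathcal A^s\cup\mathcal A^u$ could be incompressible in the (unknown) irreducible manifold $\hat V_f$, or could even be a Klein bottle, and ``opposite orientation signs'' by itself does not force compressibility. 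Moreover, choosing $c_1,c_2$ adjacent on $T^u_f$ gives no control on $T^s_f$: the annulus $\mathcal A^s$ you want to push across may meet further curves of $C_f$, and the region (if it exists) may meet other sheets of $T^s_f$ or $T^u_f$, so the swap could create new intersections instead of removing two.

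The paper closes precisely this gap by not arguing in $\hat V_f$ at first. It passes to the sink orbit space $\hat V_{\omega_f}\cong\mathbb S^2\times\mathbb S^1$, where non-orientability of $H_f$ forces some annulus component $K^u$ of the projected unstable manifold $\tilde Y_f$ to have \emph{both} boundary circles on the same knot-neighborhood torus $T^1_f$; the torus $T_{K^u}=K^u\cup K^s$ can be chosen $\eta_{\omega_f}$-essential, so Proposition \ref{f_1} (valid there, with no compressibility hypothesis) makes it bound a solid torus, and an innermost choice using Proposition \ref{ex_6} inside a solid torus yields $Q_{K^u_0}$ whose interior misses all the annuli and both neighborhoods $N_{L^1_f},N_{L^2_f}$. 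Only then is this region transferred to $\hat V_f$, where, after the careful bookkeeping with $\mathcal K^u_0,\mathcal K^s_0,\mathcal K'^s_0,\mathcal K'^u_0$ and $N_{T^s_f}$, one gets a solid torus with interior disjoint from $T^s_f\cup T^u_f$ and hence an isotopic-to-identity $\hat h$ to feed into Proposition \ref{IsotBan}. Your iteration and index bookkeeping at the end are fine, but without an argument replacing this detour through $\hat V_{\omega_f}$ (or some other proof that the cobounding solid torus exists and is clean), the proposal has a genuine gap at its central step.
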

\begin{proof} Let ${f}\in\tilde{G}_p$.
\begin{figure}[h]
\center{\includegraphics
[width=0.5\linewidth]{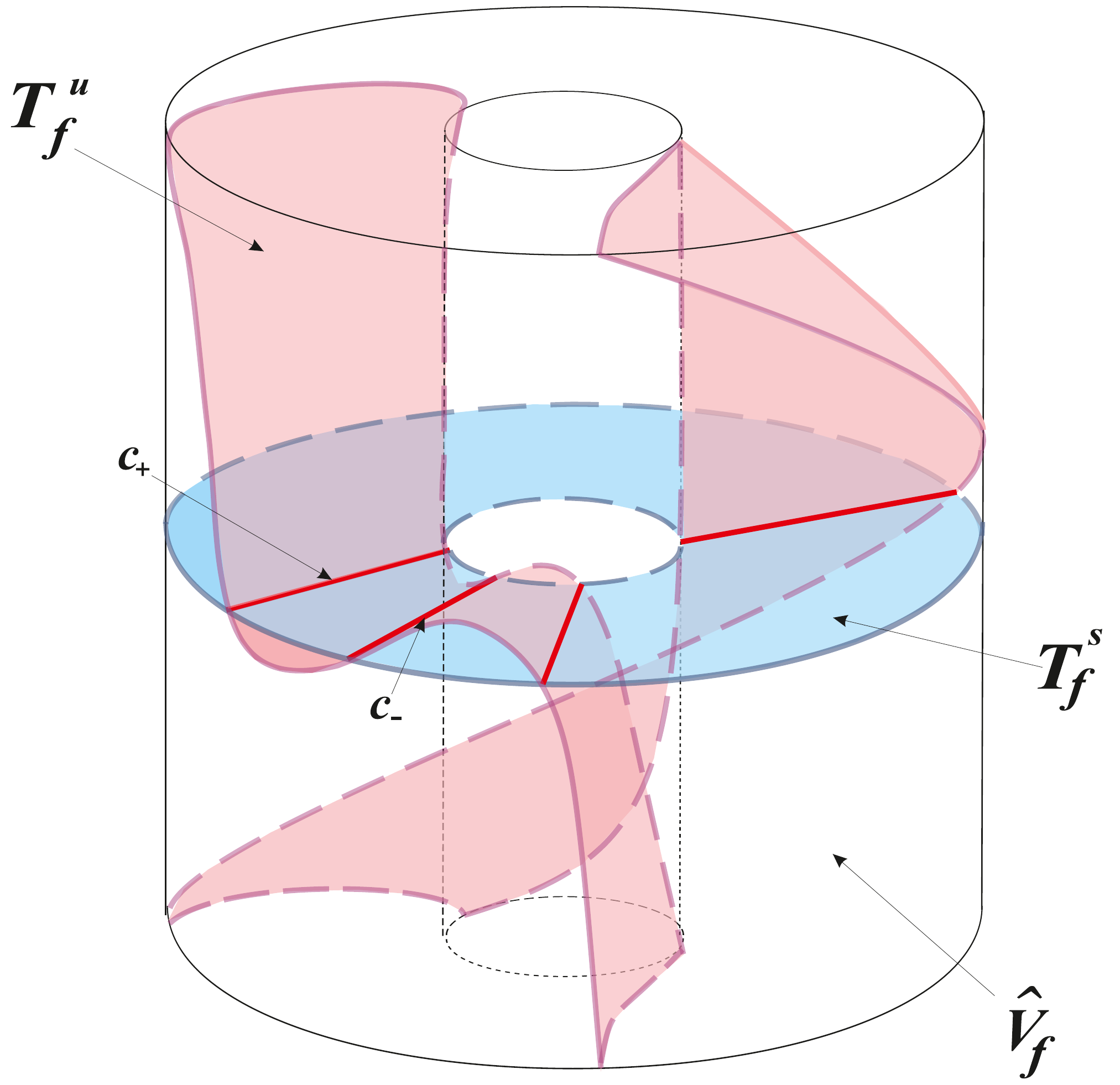}}
\caption{\small Projections of non-compact heteroclinic curves with different orientations into the space $\hat V_f$}
\label{neor-f-p}
\end{figure}  If the set $H_f$ is either empty or orientable, then the lemma is proved. Otherwise, $C_f$ consists of essential pairwise homotopy  on each of the tori $T^s_f,\,T^u_f$ (see, for example, \cite{Ro}) curves and among them there are curves $c_+,c_-$ with the positive, negative orientation, accordingly (see Fig. \ref{neor-f-p}, \ref{nef}).
\begin{figure}[h]
\center{\includegraphics
[width=0.5\linewidth]{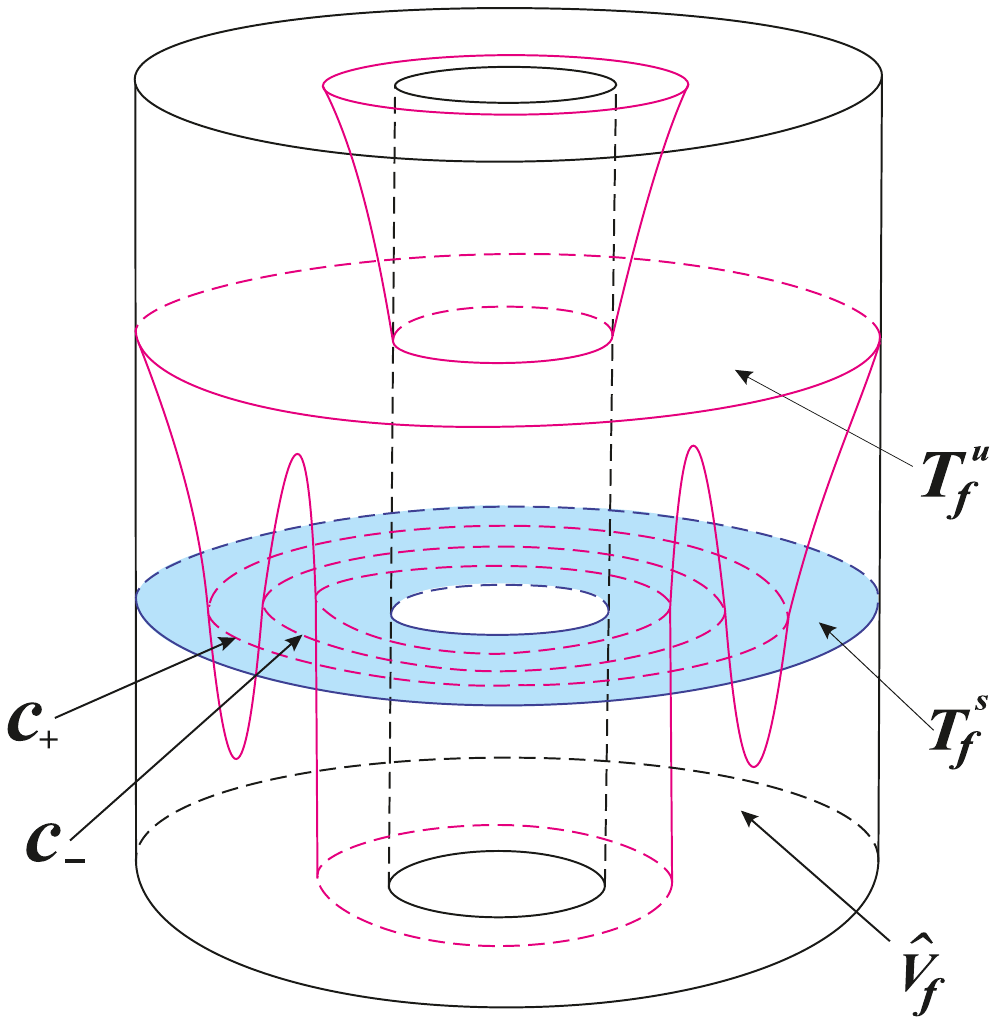}}
\caption{\small Projections of compact heteroclinic curves with  different orientations into the space $\hat V_f$}
\label{nef}
\end{figure}

We show that the number of curves in $H_f$ can be reduced by at least two. 
\begin{figure}[h]
\center{\includegraphics
[width=0.45\linewidth]{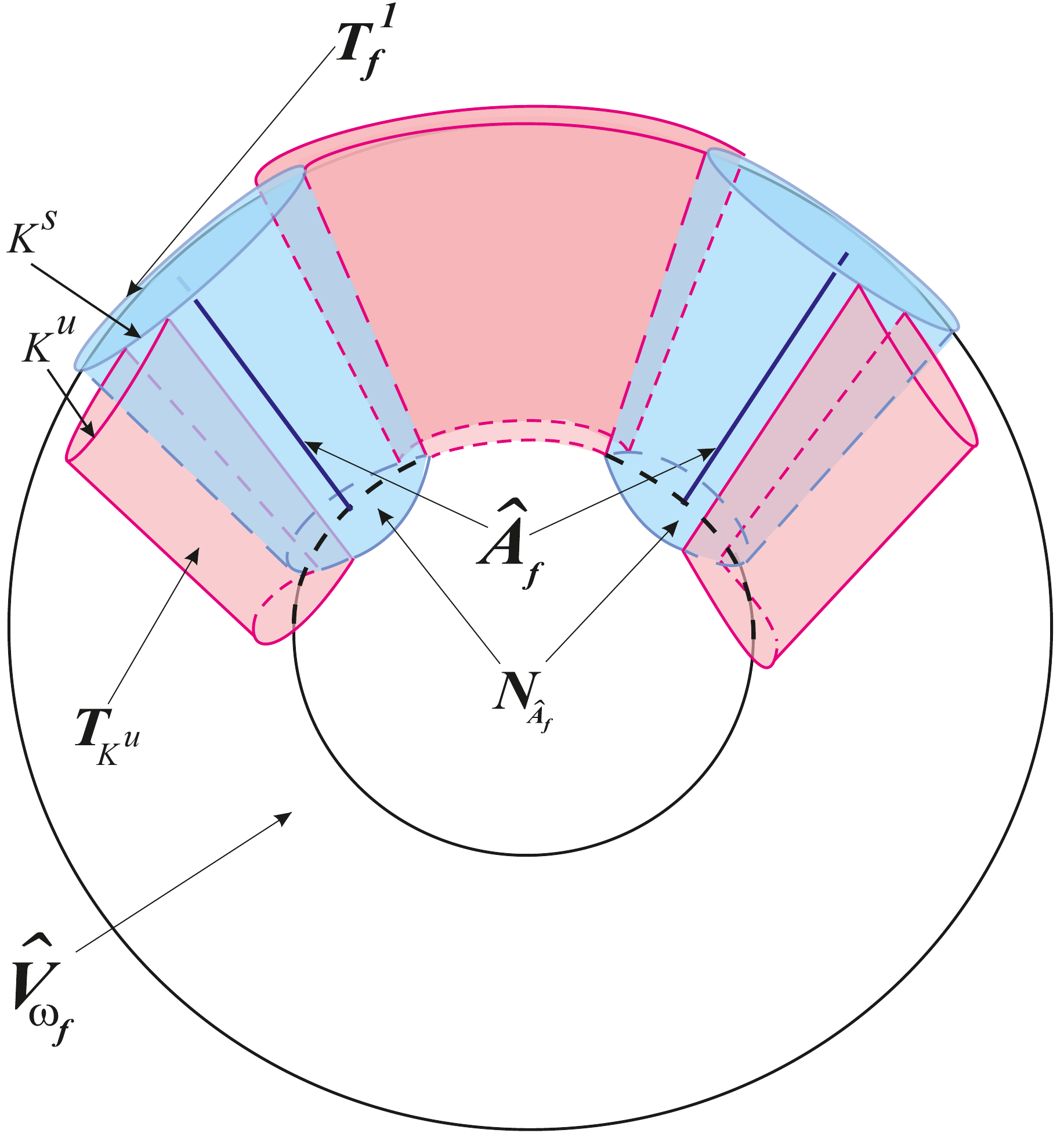}}
\caption{\small Projections of invariant saddle manifolds into the space $\hat V_{\omega_f}$ corresponding to the figure \ref{neor-f-p}}
\label{S2S1}
\end{figure} 

To do this, put $Y_{f}=p_{_{\omega_f}}(W^u_{\sigma^2_f}\cap V_{\omega_f})$ and $\tilde Y_f=Y_f\setminus int\,(N_{L^1_f}\sqcup N_{L^2_f})$. Then $\tilde Y_f$ consists of a finite number of annuli whose boundaries lie on the tori $T^1_f=\partial N_{L^1_f},\,T^2_f=\partial N_{L^2_f}$. 
\begin{figure}[h]
\center{\includegraphics
[width=0.45\linewidth]{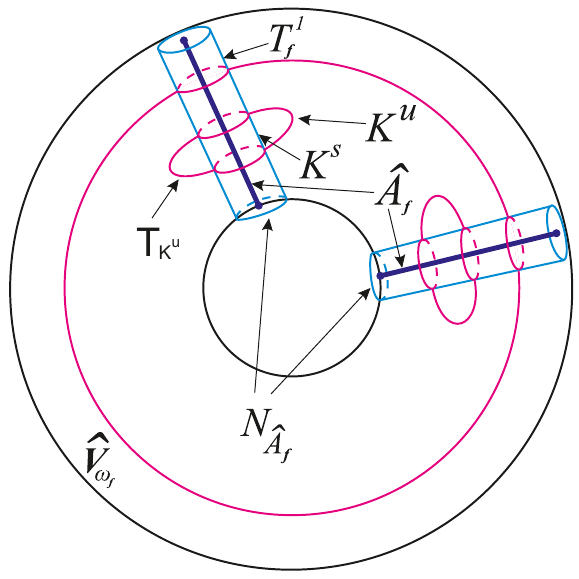}}
\caption{\small Projections of invariant saddle manifolds into the space $\hat V_{\omega_f}$ corresponding to the figure \ref{nef}}
\label{S2S12}
\end{figure}
Due to the non-orientability of the set $H_f$, there is a connected  component $K^u$ of the set $\tilde Y_f$ having boundary circles on the same connected component of the set $T^1_f\sqcup T^2_f$, for certainty we assume that on  $T^1_f$. Then the circles $\partial K^u$ divide the torus $T^1_f$ into two annuli, each of which $K^s$ forms a two-dimensional torus $T_{K^u}$ when combined with the annulus $K^u$. We show that $K^s$ can be chosen such that the torus $T_{K^u}$ is a boundary of a solid torus $Q_{K^u}$, whose the interior avoids $N_{L^1_f}\sqcup N_{L^2_f}$ in $\hat V_{\omega_f}$ (see Fig. \ref{S2S1}, \ref{S2S12}).

Since the torus $T^1_f$ is $\eta_{\omega_f}$-essential in $\hat V_{\omega_f}$, then the annulus $K^u$ can be chosen so that the torus $T_{K^u}$ is also $\eta_{\omega_f}$-essential in $\hat V_{\omega_f}$. By proposition  \ref{f_1}, the torus $T_{K^u}$ bounds a solid torus $Q_{K^u}$ in $\hat V_{\omega_f}$. If $N_{L^1_f}\subset Q_{K^u}$, then, by construction, $cl(Q_{K^u}\setminus N_{L^1_f})$ is also a solid torus bounded by a torus constructed by the second annulus $K^s$. Therefore, everywhere else we assume that $Q_{K^u}\cap N_{L^1_f}=K^s$.

Thus, every annulus $K^u$ is associated with a torus $T_{K^u}$, bounding the solid torus $Q_{K^u}$ in $\hat V_{\omega_f}$. Since, by proposition \ref{ex_6}, any torus homotopically nontrivially embedded in a solid torus bounds a unique solid torus there, then among all such solid tori $Q_{K^u}$ there exists $Q_{K^u_0}$ whose interior does not intersect with  annuli  $K^u$. Then the interior of the torus $Q_{K^u_0}$ does not intersect with the tori $N_{L^1_f}\sqcup N_{L^2_f}$ and $Q_{K^u_0}\cap Y_f=K^u_0$. Denote by $K^s_0$ the second half of the torus $T_{K^u_0}$. 

Denote by $\mathcal K^u_0$ the connected component of the set $T^u_f\setminus C_f$ such that $p_{_f}(p_{_{\omega_f}}^{-1}(K^u_0))\subset\mathcal K^u_0$ and through $\mathcal K^s_0$ the connected component of the set $T^s_f\setminus C_f$ such that the annulus $p_{_f}(p_{_{\omega_f}}^{-1}(K^s_0)),\,\mathcal K^s_0$ lie in the same connected component $N^s_0$ of the set $N_{T^s_f}\setminus T^u_f$. Denote by $\mathcal K'^s_0$ the connected  component of the set $\partial N_{T^s_f}\cap N^s_0$, different from $p_{_f}(p_{_{\omega_f}}^{-1}(K^s_0))$. Let's put $\mathcal K'^u_0=\mathcal K^u_0\cup (cl(N^s_0)\cap T^u_f)$.

By construction $\partial\mathcal K^s_0=\partial\mathcal K^u_0=c_+\sqcup c_-$, where $c_+,\,c_-\subset C_f$ are non-contractible  curves with the positive, negative orientation, respectively. In addition, the torus $\mathcal T_0=\mathcal K^u_0\cup\mathcal K^s_0$ bounds  in $\hat V_f$a solid torus $\mathcal Q_0$, whose interior does not intersect with the set $T^u_f\cup T^s_f$. Denote by $T'^u_{f}$ a two-dimensional torus obtained by smoothing the torus $(T^u_f\setminus\mathcal K'^u_0)\cup\mathcal K'^s_0$. Since the torus $\mathcal T'_0=\mathcal K'^u_0\cup\mathcal K'^s_0$ bounds  in $\hat V_f$ a solid torus $\mathcal Q'_0$, whose interior  does not intersect with the torus $T^u_f$, then there is an isotopic to identity diffeomorphism $\hat h:\hat V_f\to\hat V_f$ such that $\hat h(T^u_f)=T'^u_{f}$. Then by proposition \ref{IsotBan}  there is an arc ${\zeta}_{t}\subset E_f$ such that ${\zeta}_{0}=f, {\zeta}_{1}=f'$ and $T^u_{f'}=T'^u_{f}$, $T^s_{f'}=T^s_f$.

Thus, the diffeomorphism $f'\in G$ is given on the same manifold $M^3$ as the diffeomorphism $f$, but has two less heteroclinic curves. 
Continuing this process, we will construct an arc connecting the diffeomorphism $f$ with some diffeomorphism $f_+\in G^+_p$. 
\end{proof}

\section{Topology of a 3-manifold admitting diffeomorphisms of the class $G$}
In this section we prove Theorems  \ref{TT1} and \ref{exi}. 

\subsection{Lens space as an ambient manifold for diffeomorphisms of class $G$}\label{sec-lin}

Let us prove that if a manifold $M^3$ admits a diffeomorphism $f\in G_p$ then $M^3$ is homeomorphic to a lens space $L_{p,q}$.

\begin{proof} By theorem \ref{te1}, without lost of  generality, we will assume that $f\in G^+_p$, that is, the set of $H_f$ of heteroclinic curves of the diffeomorphism $f$ is orientable and the heteroclinic index is $p\geqslant 0$. Let 's consider the following cases separately: 1) $p=0$, 2) $p>0$.

1) In the case $p=0$, the set $H_f$ is either empty or consists only of compact curves bounding disks on $W^s_{\sigma_f^1}$ containing the saddle $\sigma_f^1$, and all curves in $H_f$ have the same orientation (see Fig. \ref{hecom}). If the set $H_f$ is empty, then, by proposition  \ref{svaz}, the ambient manifold $M^3$ is homeomorphic to $\mathbb S^2\times\mathbb S^1$ (see Fig. \ref{pic3}).

If the set $H_f$ is not empty, then each connected component  $K^u$ of the set $\tilde Y_f$ is a smooth two-dimensional annulus having one boundary component on the torus $T^1_f$, and the other -- on the torus $T^2_f$ and each of the circles is the meridian of the solid torus $N_{L^1_f},\,N_{L^2_f}$, respectively. Denote by $\delta_1\subset N_{L^1_f},\,\delta_2\subset N_{L^2_f}$ two-dimensional disks bounded by these meridians and having exactly one intersection point with the nodes ${L^1_f},\,{L^2_f}$, respectively. Then the set $S=K^u\cup d_1\cup d_2$ is a two-dimensional sphere cylindrical embedded in the manifold $\hat V_{\omega_f}$.   
Since the sphere $S$ has a single intersection point with each of the knots ${L^1_f},\,{L^2_f}$, then, by proposition \ref{ss2s1}, it is ambiently isotopic to the sphere $\mathbb S^2\times\{s_0\},\,s_0\in\mathbb S^1$. 
Let's choose a sphere $\tilde S$, close to the sphere $S$, so that the intersection of $\tilde S\cap N_{L^i_f},\,i=1,2$ is a two-dimensional disk $\tilde d_i$ having a single intersection point with the knot ${L^i_f}$ and $(\tilde S\cap Y_f)\subset int\,(\tilde d_1\sqcup\tilde d_2)$.   

Then the sphere $\bar S$, which is a connected component of the set $p_{\omega_f}^{-1}(\tilde S)$,  bounds a 3-ball $B\subset W^s_{\omega_f}$ containing $\omega_f$ in its interior. In this case, the intersection of $\bar S\cap W^u_{\sigma^2_f}$ belongs to the disjoint union of two disks $\Delta_1\subset p_{\omega_f}^{-1}(\tilde d_1),\,\Delta_2\subset p_{\omega_f}^{-1}(\tilde d_2)$ (see Fig. \ref{Sd}). 
\begin{figure}[h]
\center{\includegraphics[width=0.6\linewidth]{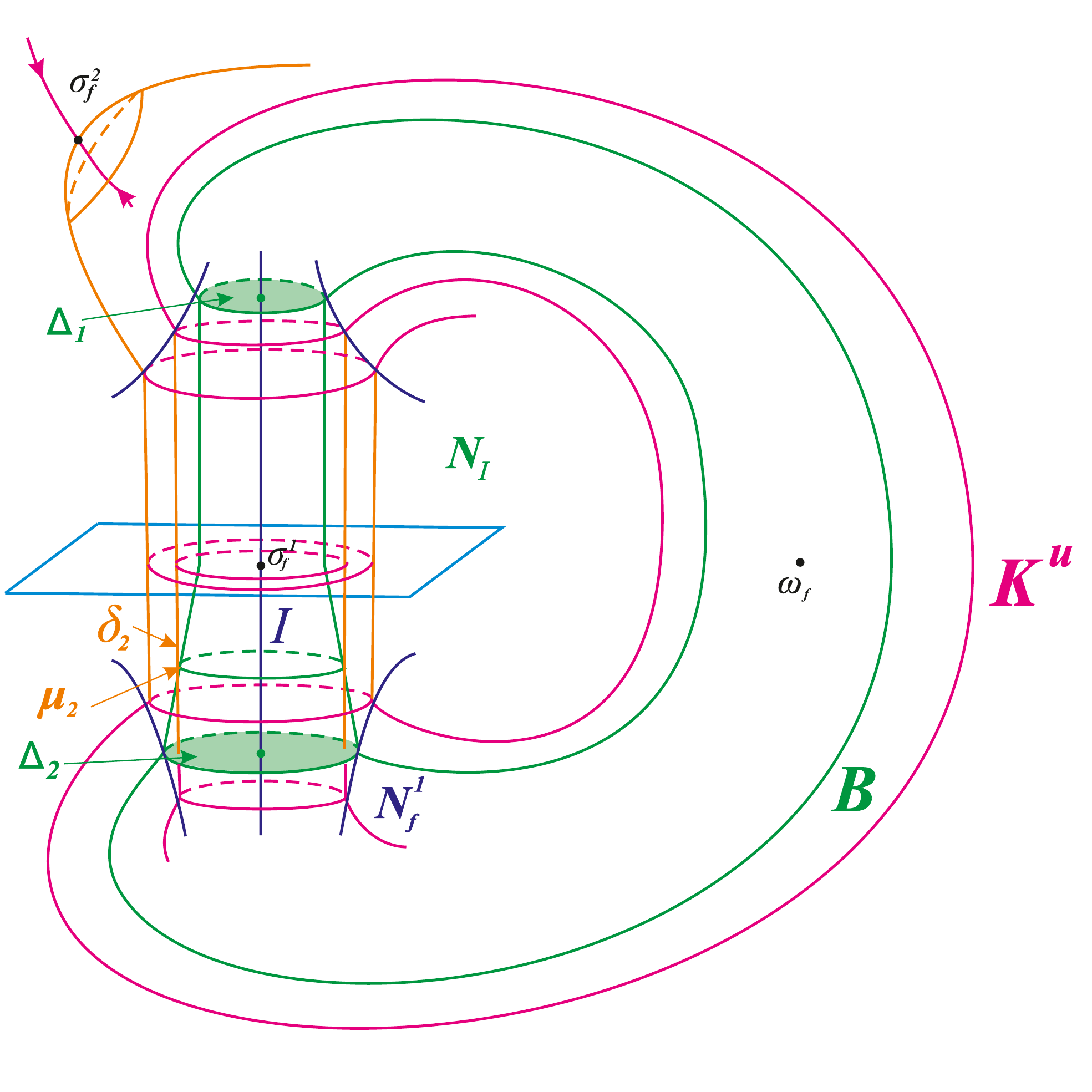}}
\caption{\small Construction of the ball $B$}
\label{Sd}
\end{figure}
Let's put $I=W^u_{\sigma_1}\setminus int\,B$. From the properties of a consistent neighborhoods system and orientability of heteroclinic curves, it follows that there exists a tubular neighborhood $N_I$ of an arc $I$ such that the intersection of $\partial N_I\cap W^u_{\sigma_f^2}$ consists of a single closed curve $\mu_2$. Then the set $Q_1=B\cup N_I$ is homeomorphic to a solid  torus and the curve $\mu_2$ is its meridian.

Since the curve $\mu_2$ is homotopic on $W^u_{\sigma_f^2}\setminus \sigma_f^2$
by the heteroclinic diffeomorphism curve $f$, then it bounds a disk $\delta_2$ containing the saddle $\sigma_f^2$. Let's choose a tubular neighborhood $N_{\delta_2}\subset M^3\setminus int\,Q_1$ of the disk $\delta_2$ so that $N_{\delta_2}\cap W^u_{\sigma_f^2}=\delta_2$ and $N_{\delta_2}\cap\partial Q_1$ is an annulus on the torus $\partial Q_1$, which is a tubular neighborhood of the curve $\mu_2$. Since the curve $\mu_2$ is essential on the torus $\partial Q_1$, the set $S_\alpha=\partial(Q_1\cup N_{\delta_2})$ is homeomorphic to the 2-sphere. By construction, the sphere $S_\alpha$ does not intersect with unstable manifolds of saddle points and, therefore, by proposition \ref{th_2.1.1.}, lies in $W^u_\alpha$, where it bounds a 3-ball $B_\alpha$.

Thus, the set $Q_2=M^3\setminus int\,Q_1$ is so that by cutting it across the disk $\delta_2$, a 3-ball is obtained. This means that $Q_2$ is a solid torus, the curve $\mu_2$ is its meridian, and $M^3=Q_1\cup Q_2$ is the lens space $L_{0,1}\cong\mathbb S^2\times\mathbb S^1$ (see Fig. \ref{hecom}).

2) In the case $p>0$, due to the orientability of the set $H_f$, each connected component $K^u$ of the set $\tilde Y_f$ is an $\eta_{\omega_f}$-essential annulus having boundary circles on different tori. Then each connected component of the boundary of the set $\hat N_f=p_{\omega_f}(N^1_f\cup N^2_f)$  is an $\eta_{\omega_f}$-essential  two-dimensional torus in $\mathbb S^2\times\mathbb S^1$ (see Fig. \ref{N12}). 
\begin{figure}[h]
\center{\includegraphics[width=0.5\linewidth]{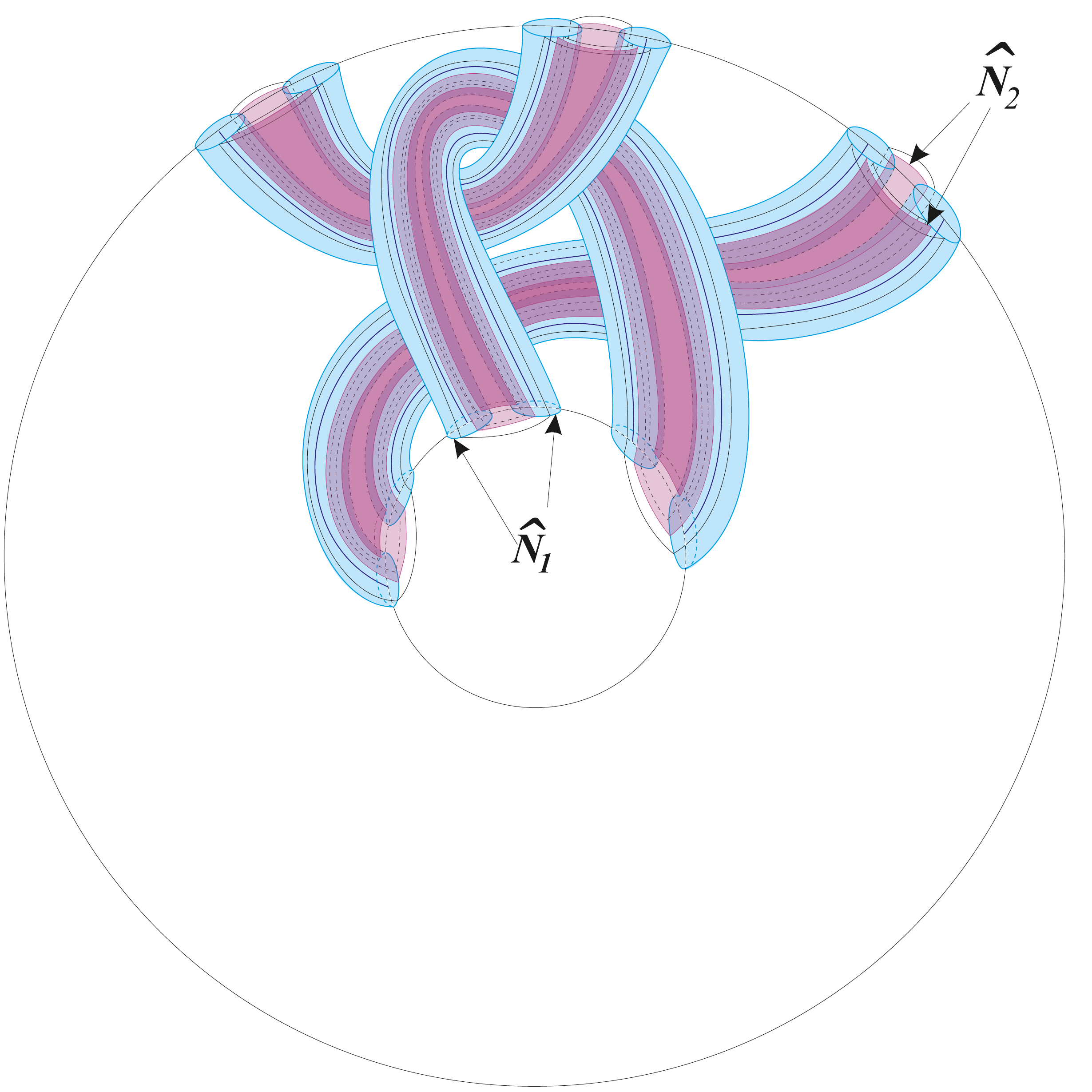}}
\caption{\small Set $\hat N_f$}
\label{N12}
\end{figure}
According to proposition  \ref{f_1}, each such torus bounds in $\hat V_{\omega_f}$ a solid torus, which implies that $\hat N_f$ belongs to the interior of the solid torus $\hat J\subset\hat V_{\omega_f}$. 

Let $J=p_{\omega_f^{-1}}(\hat J)$.    
Since $J$ is an $f$-invariant solid cylinder whose boundary does not intersect the invariant manifolds of saddle points, then, by proposition \ref{th_2.1.1.}, $\partial J\subset W^u_{\alpha_f}$. Let's choose a 2-disk $d\subset (J\cap W^u_{\alpha_f})$ so that $\partial d\subset\partial J$ and $d$ divides $J$ into two connected  components. Select a point $y_0\in int\, d$. Denote by $J_{\omega_f}$ the closure of the connected  component containing $\omega_f$. Then $J_{\omega_f}$ is a 3-ball on the manifold $M^3$, which is tame everywhere, except, perhaps, the point $\omega_f$. Let's put $S_{\omega_f}=\partial J_{\omega_f}$. According to proposition  \ref{sphere-good}  there exists a smooth $3$-ball $B\subset M^3$ such that $\omega_f\in int\, B$ and $\partial B$ transversally intersects $S_{\omega_f}$ along a single curve separating in $S_{\omega_f}$ points ${\omega_f}$ and $y_0$. Without lost of generality, we assume that $\partial B$ intersects the cylinder $J$ along the disk $\Delta$ transversally intersecting $N_f^1$ along two disks and $N_f^2$ -- along $p$ disks (see Fig. \ref{B}).
\begin{figure}[h]
\center{\includegraphics[width=0.5\linewidth]{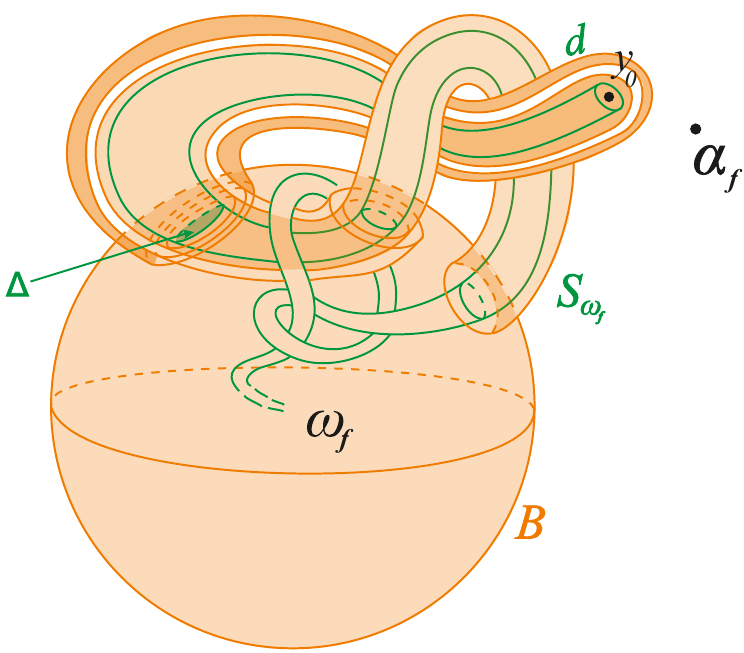}}
\caption{\small Construction of  a ball $B$}
\label{B}
\end{figure}

Let's put $I=W^u_{\sigma_1}\setminus int\,B$. From the properties of the consistent neighborhoods system and orientability of heteroclinic curves, it follows that there exists a tubular neighborhood $N_I$ of an arc $I$ such that $N_I\cap\Delta=N_f^1\cap\Delta$, $W^s_{\sigma_1}$ intersects with $Q_1$ by one 2-disk whose boundary $\mu_1$ intersects with $W^u_{\sigma_2}$ exactly at $p$ points and the intersection of $\partial N_I\cap W^u_{\sigma_f^2}$ consists exactly of $p$ curves. Then the set $Q_1=B\cup N_I$ is homeomorphic to a solid torus and $W^u_{\sigma_2}\cap \partial Q_1=W^u_{\sigma_2}\cap (\Delta\cup\partial N_I)$. Since $cl(W^u_{\sigma_2})\setminus W^u_{\sigma_2}=cl(W^u_{\sigma_1})\subset int\,Q_1$, then $W^u_{\sigma_2}\cap\partial Q_1$ consists of closed curves. Since the intersection of the disk $W^u_{\sigma_2}$ with the torus $\partial Q_1$ is oriented, it consists of a single curve 
$\mu_2$ (see Fig. \ref{ru}). 
\begin{figure}[h]
\center{\includegraphics[width=0.5\linewidth]{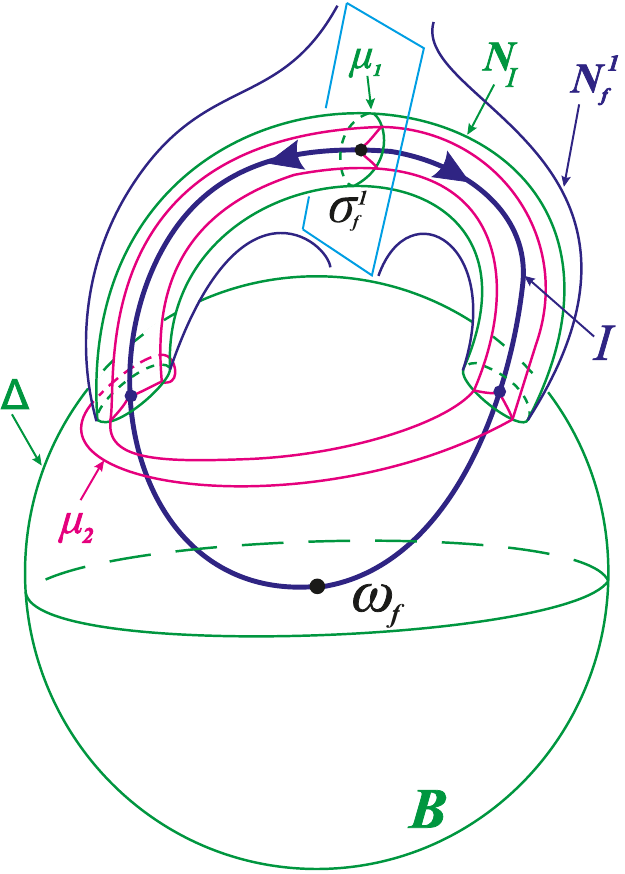}}
\caption{\small Curves $\mu_1,\,\mu_2$}
\label{ru}
\end{figure}

Since the curve $\mu_2$ intersects all heteroclinic curves of the diffeomorphism $f$ in an orientable way, it bounds a disk $\delta_2$ containing the saddle $\sigma_f^2$. Reasoning similarly to the case of $p=0$, we get that $M^3=Q_1\cup Q_2$ is the lens space $L_{p,q}$, where $\langle p,q\rangle$ is the homotopy type of the curve $\mu_2$ on the torus $\partial Q_1$.
\end{proof}

\subsection{Construction of diffeomorphisms with wildly nested separatrices on each lens space}\label{sec-dik}
In this section, we constructively prove theorem \ref{exi}: on any lens space $L_{p,q}$ there exists a diffeomorphism $f\in G$ with wildly embedded one-dimensional saddle separatrices.

\subsubsection{Construction on the lens $L_{0,1}\cong\mathbb S^2\times\mathbb S^1$}
Let $L_1,\,L_2\subset \mathbb S^2\times\mathbb S^1$ be two disjoint knots from generator class (Hopf knots), trivial and non-trivial, respectively. Let $N_{L_1},\,N_{L_2}$ be their pairwise disjoint tubular neighborhoods. Let's choose on the torus $T_i=\partial N_{L_i},\,i=1,2$ generators $\lambda_i,\,\mu_i$ so that the parallel $\lambda_i$ is a Hopf knot, and $\mu_i$ is the meridian
of the solid torus $N_{L_i}$. Let 
$\tilde N_{L_1}\supset N_{L_1}$ be  also a tubular neighborhood of the knot $L_1$ that does not intersect with $N_{L_2}$ and $\tilde T=\partial\tilde N_{L_1}$ (see Fig. \ref{gaga}).
\begin{figure}[h]
\center{\includegraphics[width=0.5\linewidth]{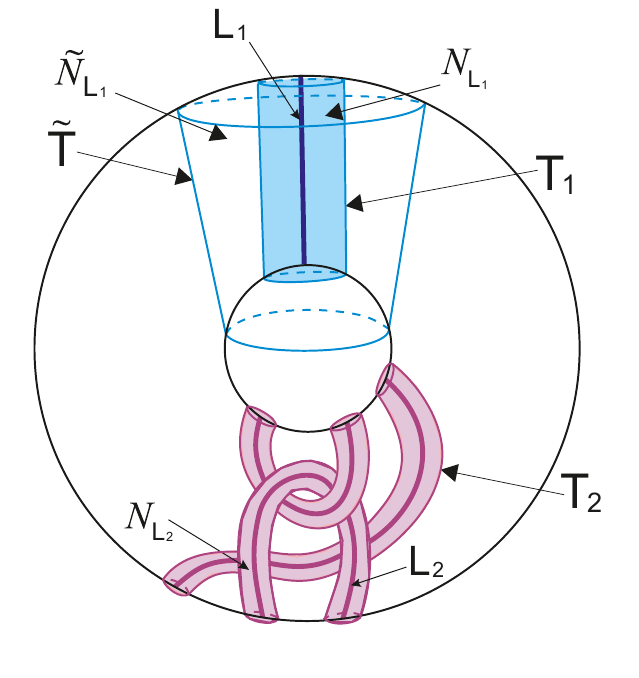}}
\caption{\small Construction of a diffeomorphism $f\in G_0$ with wildly embedded separatrices}
\label{gaga}
\end{figure}

Denote by $\hat V$ a manifold  obtained from 
$\mathbb S^2\times\mathbb S^1\setminus int\,(N_{L_1}\cup N_{L_2})$ by identifying the boundary tori by means of a diffeomorphism that translates the meridian $\mu_1$ into the meridian $\mu_2$. Denote by $q:\mathbb S^2\times\mathbb S^1\setminus int\,(N_{L_1}\cup N_{L_2})\to\hat V$ the natural projection. Let's put $\hat L^s=q(\tilde T)$ and $\hat L^u=q(T_2)$. 
Note that the fundamental group $\pi_1(\hat V)$ admits an  epimorphism $\eta:\pi_1(\hat V)\to\mathbb Z$, which assigns to the homotopy class of a closed curve in $\hat V$ the number of its revolutions around  $q(\lambda_1)$. At the same time, the tori $\hat L^s,\,\hat L^u$ are $\eta$-essential. Let 's put 
$$S=(\hat V,\eta,\hat L^s,\hat L^u).$$ 

By construction, the manifold $\hat V_{\hat L^s}$ is homeomorphic to the initial manifold $\mathbb S^2\times\mathbb S^1$. Since the torus $\tilde T$ bounds two solid tori in $\mathbb S^2\times\mathbb S^1$, then the manifold $\hat V_{\hat L^u}$ is also homeomorphic to the manifold $\mathbb S^2\times\mathbb S^1$. 

Thus, the scheme $S$ is an abstract scheme. By proposition  \ref{rems3},  the scheme $S$ is realizable by some gradient-like diffeomorphism $f\in MS(M^3)$ such that the schemes $S_f$ and $S$ are equivalent. Since the sets ${\hat L^s},\,{\hat L^u},\,\hat V_{\hat L^s},\,\hat V_{\hat L^u}$ are connected, the diffeomorphism $f$ has exactly four non-wandering points of pairwise different Morse indices, that is, $f\in G$. Since the tori ${\hat L^s},\,{\hat L^u}$ do not intersect, the set $H_f$ is empty. According to the theorem \ref{TT1}, the ambient manifold of the diffeomorphism $f$ is homeomorphic to the lens space $L_{0,1}\cong\mathbb S^2\times\mathbb S^1$. According to proposition \ref{wild}, the manifold $W^u_{\sigma^1_f}$ is wildly embedded in the supporting manifold.

\subsubsection{Construction on the lens $L_{p,q},\,p\neq 0$}

Let $p\neq 0$ and $q\neq 0$ be mutually simple with $p$. On the  three-dimensional torus $$\mathbb T^3=\mathbb S^1\times\mathbb S^1\times\mathbb S^1=\left\{\left(e^{i2\pi x},e^{i2\pi y},e^{i2\pi z}\right):\,x,y,z\in\mathbb R\right\}$$ let's set the generators $$a=\mathbb S^1\times\{e^{i2\pi 0}\}\times\{e^{i2\pi 0}\},\,b=\{e^{i2\pi 0}\}\times\mathbb S^1\times\{e^{i2\pi 0}\},\,c=\{e^{i2\pi 0}\}\times\{e^{i2\pi 0}\}\times\mathbb S^1.$$
Let's define two-dimensional tori $\tilde T^s,\,\tilde T^u\subset\mathbb T^3$ as follows: $$\tilde T^s=\left\{\left(e^{i2\pi x},e^{i2\pi y},e^{i2\pi z}\right):\,z=0\right\},\,\tilde T^u=\left\{\left(e^{i2\pi x},e^{i2\pi y},e^{i2\pi z}\right):\,z=\frac {p}{q}y\right\}.$$ Let's choose  tubular neighborhoods of these tori $N_{\tilde T^s},\,N_{\tilde T^u}$. By construction, the closure of each connected component of the set $\mathbb T^3\setminus (N_{\tilde T^s}\cup N_{\tilde T^u})$ is a solid torus with a generator homotopic to the knot $a$. Let's choose one such component $W$ and denote by $\mu_{_W}$ the meridian of the solid torus $W$ (see Fig. \ref{W}). 
\begin{figure}[h]
\center{\includegraphics[width=0.75\linewidth]{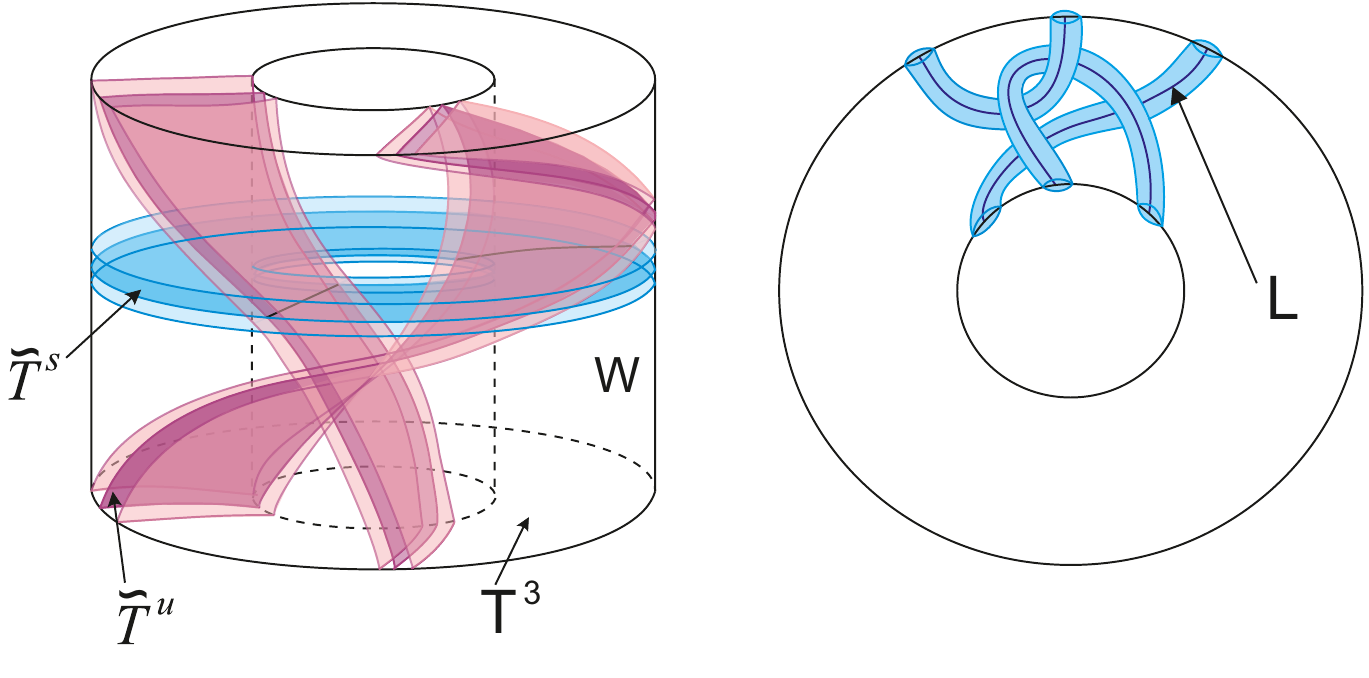}}
\caption{\small Construction of a diffeomorphism $f\in G_2$ with wildly embedded separatrices}
\label{W}
\end{figure}

Let $L\subset\mathbb S^2\times\mathbb S^1$ be a non-trivial Hopf knot, $N_L$ be its tubular neighborhood with meridian 
$\mu_{_{N_L}}$ and $\zeta:\partial N_L\to\partial W$ is a diffeomorphism that translates the meridian $\mu_{_{N_L}}$ into the meridian $\mu_{_W}$. Let's put $$\hat V=(\mathbb T^3\setminus int\,W)\cup_{\zeta}(\mathbb S^2\times\mathbb S^1\setminus int\,N_L).$$ Denote by $q:(\mathbb T^3\setminus int\,W)\sqcup(\mathbb S^2\times\mathbb S^1\setminus int\,N_L)\to\hat V$ the natural projection. Let's put $\hat L^s=q(\tilde T^s),\,\hat L^u=q(\tilde T^u)$. Note that the fundamental group $\pi_1(\hat V)$ admits an  epimorphism $\eta:\pi_1(\hat V)\to\mathbb Z$, which assigns to the homotopy class of a closed curve in $\hat V$ the number of its revolutions around $q(a)$. At the same time, the tori $\hat L^s,\,\hat L^u$ are $\eta$-essential. Let 's put 
$$S=(\hat V,\eta,\hat L^s,\hat L^u).$$

Let's check the validity of the abstract scheme by showing that the manifold $\hat V_{\hat L^s}$ is homeomorphic to the manifold $\mathbb S^2\times\mathbb S^1$ (for the manifold $\hat V_{\hat L^u}$, the proof is similar).

By construction, the manifold $\mathbb T^3\setminus int\,N_{\tilde T^s}$ is homeomorphic to $\mathbb T^2\times[0,1]$. Gluing a solid torus to each component of the connectivity of this manifold so that the meridian of the solid torus is glued to the curve which is homotopic to $b$, we get the manifold $\mathbb S^2\times\mathbb S^1$. In this case, the resulting manifold is a gluing along the boundary of two solid tori $W$ and $\mathbb S^2\times\mathbb S^1\setminus int\,W$. Then the manifold $\hat V_{\hat L^s}$ is obtained by gluing the manifolds 
$\mathbb S^2\times\mathbb S^1\setminus int\,W$, $\mathbb S^2\times\mathbb S^1\setminus int\,N_L$ along the boundary by means  of a diffeomorphism that translates the meridian $\mu_{_{N_L}}$ into meridian $\mu_{_W}$. Since $\mathbb S^2\times\mathbb S^1\setminus int\,W$ is a solid torus, $\hat V_{\hat L^s}$ is homeomorphic to the manifold $\mathbb S^2\times\mathbb S^1$. 

Thus, the scheme $S$ is an abstract scheme. By proposition  \ref{rems3}, the scheme $S$ is realizable by some gradient-like diffeomorphism $f\in MS(M^3)$ such that the schemes $S_f$ and $S$ are equivalent. Since the sets ${\hat L^s},\,{\hat L^u},\,\hat V_{\hat L^s},\,\hat V_{\hat L^u}$ are connected, the diffeomorphism $f$ has exactly four non-wandering points of pairwise different Morse indices, that is, $f\in G$. Since the tori ${\hat L^s},\,{\hat L^u}$ intersect orientably along $p$ $\eta$-essential curves, the set $H_f$ is orientable and consists of $p$ non-compact heteroclinic curves. According to theorem  \ref{TT1}, the ambient  manifold of the diffeomorphism $f$ is homeomorphic to the lens space $L_{p,q}$. According to proposition \ref{wild}, the manifold $W^u_{\sigma^1_f}$ is wildly embedded into the supporting manifold.

\bibliographystyle{ieeetr}\bibliography{biblio}
\end{document}